\DeclareSymbolFont{cyrletters}{OT2}{wncyr}{m}{n}
\DeclareMathSymbol{\Sha}{\mathalpha}{cyrletters}{"58}
\newtheorem*{thma}{Theorem A}
\newtheorem*{thmb}{Theorem B}
\newtheorem*{corc}{Corollary C}
\newcommand{\LL}{\Lambda}
\newcommand{\QQ}{\mathbb{Q}}
\newcommand{\FF}{\mathcal{F}}
\newcommand{\lra}{\longrightarrow}
\newcommand{\ZZ}{\mathbb{Z}}
\newcommand{\ra}{\rightarrow}
\newcommand{\be}{\begin{equation}}
\newcommand{\ee}{\end{equation}}
\newcommand{\al}{\mathcal{L}}
\newcommand{\FFc}{\mathcal{F}_{\textup{\lowercase{can}}}}
\numberwithin{equation}{section}
\newtheorem{thm}{Theorem}[section]
\newtheorem{lemma}[thm]{Lemma}
\newenvironment{define}{\par\medskip\noindent\refstepcounter{thm}
\bgroup{\hspace*{-0.15 cm}\bf{Definition}
\thethm.}\bgroup}{\egroup \egroup\par\medskip}\newtheorem{prop}[thm]{Proposition}
\newtheorem{cor}[thm]{Corollary}
\newenvironment{rem}{\par\medskip\noindent\refstepcounter{thm}
\bgroup{\hspace*{-0.15 cm}\bf{Remark} \thethm.}\bgroup}{\egroup
\egroup\par\medskip} \parskip 2pt
\newenvironment{conj}{\par\medskip\noindent\refstepcounter{thm}
\bgroup{\hspace*{-0.15 cm}\bf{Conjecture}
\thethm.}\bgroup}{\egroup \egroup\par\medskip}
\newcounter{Athm}[section]\setcounter{Athm}{1}
\renewcommand{\theAthm} {\arabic{Athm}}
\long\def\symbolfootnote[#1]#2{\begingroup%
\def\thefootnote{\fnsymbol{footnote}}\footnote[#1]{#2}\endgroup}
\begin{document}
\title{O\lowercase{n} N\lowercase{ekov\'a\v{r}'s heights, exceptional zeros and a conjecture of} M\lowercase{azur}-T\lowercase{ate}-T\lowercase{eitelbaum}}

\author{K\^az\i m B\"uy\"ukboduk}

\email{kazim@math.stanford.edu}
\address{K\^az\i m B\"uy\"ukboduk \hfill\break\indent {Ko\c{c} University Mathematics, 34450 Sariyer, Istanbul, Turkey}}
\keywords{$p$-adic height pairings, Selmer complexes, $p$-adic $L$-functions}
\subjclass[2000]{11G05; 11G07; 11G40; 11R23; 14G10}

\begin{abstract}
Let $E/\QQ$ be an elliptic curve which has split multiplicative reduction at a prime $p$ and whose analytic rank $r_{\textup{an}}(E)$ equals one. The main goal of this article is to relate the second order derivative of the Mazur-Tate-Teitelbaum $p$-adic $L$-function $L_p(E,s)$ of $E$ to Nekov\'{a}\v{r}'s height pairing evaluated on natural elements arising from the Beilinson-Kato elements. Along the way, we extend \emph{a} Rubin-style formula of Nekov\'a\v{r} (or in an alternative wording, correct \emph{another} Rubin-style formula of his) to apply in the presence of exceptional zeros. Our height formula allows us, among other things, to compare the order of vanishing of $L_p(E,s)$ at $s=1$ to its (complex) analytic rank $r_{\textup{an}}(E)$ assuming the non-triviality of the height pairing. This has consequences towards a conjecture of Mazur, Tate and Teitelbaum. 
\end{abstract}

\maketitle
\tableofcontents
\section{Introduction}
\label{sec:intro}
 Fix a prime $p>3$ and an elliptic curve $E$ defined over $\QQ$ that has split multiplicative reduction at $p$. Let $L(E,s)$ (resp., $L_p(E,s)$) denote the complex Hasse-Weil $L$-function (resp., the Mazur-Tate-Teitelbaum $p$-adic $L$-function) attached to $E$. By the work of Wiles $L(E,s)$ is admits an analytic continuation to the whole complex plane. Let $r_{\textup{an}}(E)$ denote the order of vanishing of $L(E,s)$ at $s=1$. As we have assumed that the elliptic curve $E$ has split multiplicative reduction at $p$, the $p$-adic $L$-function $L_p(E,s)$ has an \emph{exceptional zero} at $s=1$ in the sense of Greenberg~\cite{gr} due to the vanishing of the interpolation factor $(1-p^{1-s})(1-p^{-s})$ at $s=1$. Mazur, Tate and Teitelbaum conjecture in this case that
 \be\label{eqn:conjmtt1}
 \textup{ord}_{s=1}\, L_p(E,s)=1+r_{\textup{an}}(E).
 \ee
 This is the conjecture that the title of this article refers to. Furthermore, they conjectured a formula for the first derivative of $L_p(E,s)$:
 \be\label{eqn:mttconj2}
 \frac{d}{ds}L_p(E,s)\Big|_{s=1}=\frac{\log_p(q_E)}{\textup{ord}_p(q_E)}\cdot\frac{L(E,1)}{\Omega_E^+},
 \ee
 where $\Omega_E^+$ is the real period of $E$ and $q_E$ is the Tate period of $E$ (obtained via the $p$-adic uniformization of $E$) and $\log_p$ is the $p$-adic logarithm. Greenberg and Stevens \cite{greenbergstevens} gave a proof of the assertion (\ref{eqn:mttconj2}).  The so-called \emph{Saint-Etienne theorem} (formerly, a conjecture of Manin) proved in~\cite{sainetienne} shows that $\log_p(q_E)\neq0$. We therefore conclude that (\ref{eqn:conjmtt1}) holds true when $r_{\textup{an}}(E)=0$. As far as the author is aware, nothing substantial was known when $r_{\textup{an}}(E)> 0$ prior to this work.

The conjecture of Birch and Swinnerton-Dyer (henceforth, abbreviated as BSD) predicts that the behavior of the Hasse-Weil $L$-function $L(E,s)$ at $s=1$ is related to the ($p$-adic) Selmer group $\textup{Sel}_p(E/\QQ)$ (see \S\ref{subsubsec:comparison} below for a definition of this Selmer group). In particular, BSD predicts that $r_{\textup{an}}(E)=\textup{rank}_{\ZZ_p}(\textup{Sel}_p(E/\QQ))$ and  further that the $r_{\textup{an}}(E)$-th derivative of $L(E,s)$ at $s=1$ should be expressed (among other things) in terms of a certain regulator calculated on $\textup{Sel}_p(E/\QQ)$.

The conjectured equality (\ref{eqn:conjmtt1}) suggests that, in order to formulate the $p$-adic analog of BSD for $L_p(E,s)$ at $s=1$ one should replace the classical Selmer group with an extended Selmer group so as to compensate for the (conjectural) gap between the rank of $\textup{Sel}_p(E/\QQ)$ and $\textup{ord}_{s=1}\, L_p(E,s)$. This has been carried out initially in \cite{mtt}; later Nekov\'a\v{r} in \cite{nek} defined his \emph{extended} Selmer groups in a much more general framework. The purpose of this article is to express the first (resp., second) order derivative of $L_p(E,s)$ at $s=1$ when $r_{\textup{an}}(E)=0$ (resp., when $r_{\textup{an}}(E)=1$) in terms of Nekov\'{a}\v{r}{s}'s height pairings defined on his extended Selmer groups. When $r_{\textup{an}}(E)=0$, this allows us to interpret Kobayashi's computations \cite{kobayashi06} from the perspective offered by Nekov\'a\v{r}'s general theory. The main contribution of this article, however, concerns the case $r_{\textup{an}}(E)=1$. In this case, relying on a Rubin-style formula we prove\footnote{In an earlier version of this article we have made use of the (incorrect) Proposition~11.5.11 of \cite{nek}. However, we are still able to prove a statement along these lines (which might be of independent interest) befitting our needs in Appendix A below.} in the appendix we reduce the conjecture (\ref{eqn:conjmtt1}) to the non-degeneracy of Nekov\'a\v{r}'s $p$-adic height pairing. 

\begin{rem}
Soon after we circulated the initial version of this article among experts in late 2012 (in which we had assumed the truth of a conjecture of Perrin-Riou), we learned about R. Venerucci's work on Perrin-Riou's conjecture, which then allowed us to lift that hypothesis on our results. We remark that R. Venerucci has subsequently (yet completely independently) deduced the main results of this article albeit in a somewhat different form.
\end{rem}

Before we explain the results of the current article in detail, let us introduce some notation. See also \cite{kbbheights} for an investigation along these lines when $E$ is replaced by $\mathbb{G}_\textup{m}$ and when the relevant $p$-adic $L$-function is the Kubota-Leopoldt $p$-adic $L$-function. 
\subsection*{Acknowledgements} I thank Barry Mazur heartily for having a close look at this work and Denis Benois, Tadashi Ochiai, Karl Rubin for helpful correspondences and comments. I am also indebted to Masato Kurihara who suggested that I should compare Kobayashi's work with the results of \cite{kbbheights} which initiated the train of thoughts that led me to this work and to Massimo Bertolini who has notified me about the work of R. Venerucci on Perrin-Riou's conjecture. I thank CRM for their hospitality; Francesc Castella, Victor Rotger (and his group in Barcelona) for organizing a seminar where I explained this work in detail and for their feedbacks during the series of talks. Finally, I am deeply grateful to R. Venerucci for carefully  reading this manuscript and pointing out many inaccuracies in an earlier version of this article. 

When this project was carried out, the author was partially supported by the Marie Curie grant EC-FP7 230668, a T\"UB\.ITAK grant and by the Turkish Academy of Sciences.
\subsection{Notation and Hypotheses}
\label{subsec:notation}
 For any  field $K$, fix a separable closure $\overline{K}$ of $K$ and set  $G_K=\textup{Gal}(\overline{K}/K)$.  Let $\QQ_\infty/\QQ$ denote the cyclotomic $\ZZ_p$-extension of $\QQ$ and let $\Gamma=\textup{Gal}(\QQ_\infty/\QQ)$. We write $\rho_{\textup{cyc}}$ for the cyclotomic character $\rho_{\textup{cyc}}:\Gamma\stackrel{\sim}{\ra}1+p\ZZ_p$. Let $\QQ_n$ denote the unique sub-extension of $\QQ_\infty/\QQ$ of degree $p^n$ over $\QQ$, i.e., the fixed field of $\Gamma^{p^n}$. Let $\Phi_n$ be the completion of $\QQ_n$ at the unique prime of $\QQ_n$ above $p$, and set $\Phi_\infty=\cup \Phi_n$, the cyclotomic $\ZZ_p$-extension of $\QQ_p$. By slight abuse of notation $\textup{Gal}(\Phi_\infty/\QQ_p)$ will be denoted by $\Gamma$ as well. Let $\Gamma_n=\Gamma/\Gamma^{p^n}=\textup{Gal}(\QQ_n/\QQ)$. We fix a topological generator $\gamma$ of $\Gamma$. We also set $\LL=\ZZ_p[[\Gamma]]$ as the cyclotomic Iwasawa algebra and $J=\ker(\LL\ra\ZZ_p)$ (where the arrow is the map induced from $\gamma\mapsto1$) as the augmentation ideal.

 Let $E/{\QQ}$ be an elliptic curve that has split multiplicative reduction at $p$ and let $S\supset \{p,\infty\}$ denote the set of primes where $E$ has bad reduction. Let $T=T_p(E)$ denote its $p$-adic Tate module and set $V=T\otimes\QQ_p$. We have an exact sequence
 \be\label{eqn:tateexactseq} 0\lra F^+_pT\lra T \lra F^-_pT\lra 0\ee
 of $\ZZ_p[[G_{\QQ_p}]]$-modules, where $F^+_pT\cong \ZZ_p(1)$ and $F^-_p T \cong \ZZ_p$. Let $T^*=\textup{Hom}(T,\ZZ_p(1))$ (resp., $V^*=T^*\otimes\QQ_p$) and $F^{\pm}T^*=\textup{Hom}(F^{\mp}_pT,\ZZ_p(1))$, so that $T^*$ fits in an exact sequence of $\ZZ_p[[G_{\QQ_p}]]$-modules
$$0\lra F^+_pT^*\lra T^* \lra F^-_pT^*\lra 0.$$
Note that the Weil pairing shows that there is an isomorphism $T\cong T^*$ of $\ZZ_p[[G_\QQ]]$-modules.
Let $\tan(E/\QQ_p)$ denote the tangent space of $E/\QQ_p$ at the origin and consider the Lie group exponential
map
$$\exp_{E}: \tan(E/\QQ_p) \lra E(\QQ_p)\otimes\QQ_p.$$
Fix a minimal Weierstrass model of $E$ and let $\omega_E$ denote the corresponding holomorphic
differential. The cotangent space $\textup{cotan}(E)$ is generated by the invariant differential $\omega_E$, let $\omega_E^*\in \tan(E/\QQ_p)$ be the corresponding dual basis. Then there is a dual exponential map
$$\exp^*_E: H^1(G_p,V^*) \lra \textup{cotan}(E)=\QQ_p\omega_E$$
and an induced map
$$\exp_{\omega_E}^*=\omega_E^*\circ \exp_E^*: H^1(G_p,V^*) \lra \QQ_p.$$
Let $E_p(s)=1-p^{-s}$ denote the Euler factor of $L(E,s)$ at $p$ and define
 $$\xymatrix{\rho: \Gamma \ar[r]^(.47){{\rho_{\textup{cyc}}}}&1+p\ZZ_p\ar[rr]^(.55){E_p(1)^{-1}\log_p}&&\ZZ_p}$$
 to be a fixed normalization of $\rho_{\textup{cyc}}$.
 \subsection{Statements of the results}
 \label{subsec:results}
 For $X=V,V^*$, let $\widetilde{H}^1_f(X)$ denote Nekov\'a\v{r}'s extended Selmer group attached to $X$ and let
\be\label{eqn:nekpairingintro}\langle\,,\,\rangle_{\textup{Nek}}: \widetilde{H}^1_f(V)\otimes\widetilde{H}^1_f(V^*)\lra \QQ_p\otimes_{\ZZ_p}\Gamma\ee
 denote Nekov\'a\v{r}'s height pairing; see \S\ref{subsubsec:selmer} below for the definitions of these objects. Let $\langle\,,\,\rangle_{\textup{Nek},\rho}$ denote the compositum
 $$\langle\,,\,\rangle_{\textup{Nek},\rho}: \widetilde{H}^1_f(V)\otimes\widetilde{H}^1_f(V^*)\lra\QQ_p\otimes\Gamma\stackrel{\rho}{\lra}\QQ_p.$$
Let $\frak{z}^{\textup{BK}}_0 \in H^1(\QQ,V^*)$ denote Beilinson-Kato element (whose basic properties are recalled in \S\ref{subsec:katobeilinson} below) and set ${z}^{\textup{BK}}=\textup{res}_p(\frak{z}_0^{\textup{BK}})$ to be the image of $\frak{z}^{\textup{BK}}_0$ under the localization map
$$\textup{res}_p: H^1(\QQ,V^*)\lra H^1(\QQ_p,V^*).$$
As in \S\ref{subsec:compuationr0} below, one may define elements $[-\textup{ord}_p(q_E)^{-1}] \in \widetilde{H}^1_f(V)$ and $[\exp_{\omega_E}^*(z^{\textup{BK}})]\in\widetilde{H}^1_f(V^*)$ of the extended Selmer groups. We are now ready to state our first theorem.
\begin{thma}[Theorem~\ref{thm:heightformular0} below]
$$\frac{d}{ds}L_p(E,s)\Big|_{s=1}=\left\langle[-\textup{ord}_p(q_E)^{-1}], [\exp_{\omega_E}^*(z^{\textup{BK}})] \right \rangle_{\textup{Nek},\rho}.$$
\end{thma}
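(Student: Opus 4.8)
The plan is to compute both sides of the asserted identity by reducing them to the same explicit local quantity at $p$, exploiting the fact that $L(E,1)=0$ forces the Beilinson--Kato class $\frak{z}_0^{\textup{BK}}$ to lie in the (classical) Selmer group, so that $z^{\textup{BK}}=\textup{res}_p(\frak{z}_0^{\textup{BK}})$ lands in the local condition $H^1_f(\QQ_p,V^*)$ and in particular in the image of $F^+_pV^*$. First I would recall the Greenberg--Stevens formula (\ref{eqn:mttconj2}), which already evaluates the left-hand side as $\frac{\log_p(q_E)}{\textup{ord}_p(q_E)}\cdot\frac{L(E,1)}{\Omega_E^+}$; combining this with the explicit reciprocity law of Kato (Perrin-Riou's big exponential / the interpolation property of the Beilinson--Kato Euler system) that expresses $\frac{L(E,1)}{\Omega_E^+}$ as a nonzero rational multiple of $\exp_{\omega_E}^*(z^{\textup{BK}})$ up to the Euler factor $E_p(1)=1-p^{-1}$, I obtain the left-hand side as a concrete multiple of $\exp_{\omega_E}^*(z^{\textup{BK}})\cdot\log_p(q_E)/\textup{ord}_p(q_E)$.

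Next I would unwind the right-hand side. The element $[-\textup{ord}_p(q_E)^{-1}]\in\widetilde H^1_f(V)$ is (by its construction in \S\ref{subsec:compuationr0}) an ``exceptional'' class coming from the fact that $F^-_pV\cong\QQ_p$ contributes to the extended Selmer group precisely because of the exceptional zero; it is pinned down by the local Tate parameter, and pairing against it under Nekov\'a\v{r}'s height pairing is governed by the $(\varphi=1)$-part of the local cohomology at $p$. Likewise $[\exp_{\omega_E}^*(z^{\textup{BK}})]\in\widetilde H^1_f(V^*)$ is the class determined by $z^{\textup{BK}}\in H^1_f(\QQ_p,V^*)$ via the dual exponential normalization. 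Because one of the two arguments is the canonical exceptional class attached to the multiplicative reduction, Nekov\'a\v{r}'s height pairing degenerates to its purely local component at $p$: I would invoke the description of $\langle\,,\,\rangle_{\textup{Nek}}$ via the connecting maps in the Selmer complex, together with the local triangle (\ref{eqn:tateexactseq}), to show that
$$\left\langle[-\textup{ord}_p(q_E)^{-1}],[\exp_{\omega_E}^*(z^{\textup{BK}})]\right\rangle_{\textup{Nek},\rho}$$
equals $\exp_{\omega_E}^*(z^{\textup{BK}})$ times the image under $\rho$ of the local height of the Tate class, which is exactly $\log_p(q_E)/\textup{ord}_p(q_E)$ by the standard computation of the $p$-adic height on $\QQ_p^\times$ (the Tate curve being $\overline{\QQ}_p^\times/q_E^{\ZZ}$). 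Here the normalization $\rho=E_p(1)^{-1}\log_p\circ\rho_{\textup{cyc}}$ is precisely what cancels the Euler factor appearing in Kato's reciprocity law, which is why the final formula comes out clean.

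Matching the two computations then gives the theorem. The main obstacle I expect is the second paragraph: making rigorous the claim that Nekov\'a\v{r}'s (global) height pairing reduces to the explicit local pairing at $p$ in the presence of the exceptional zero. This requires carefully tracking the extended Selmer complex $\widetilde{C}^{\bullet}_f(V)$, identifying the extra class $[-\textup{ord}_p(q_E)^{-1}]$ with the generator of the $\QQ_p$-line $H^0(\QQ_p,F^-_pV)$ that enters $\widetilde H^1_f$, and checking that the cup-product/connecting-homomorphism recipe for $\langle\,,\,\rangle_{\textup{Nek}}$ collapses — on this particular pair — to the local invariant computed by $\exp_{\omega_E}^*$ on one side and by the Tate-period logarithm on the other. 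The local computation of the height of the canonical class on the Tate curve, and the precise compatibility of signs and normalizations (the factor $\textup{ord}_p(q_E)^{-1}$, the choice of $\gamma$, and the $\rho$-twist), is where all the care is needed; the input from Kato's explicit reciprocity law and Greenberg--Stevens is essentially black-boxed.
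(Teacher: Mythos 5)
Your sketch misidentifies the case in which Theorem A has content, and this leads you to misread both arguments of the height pairing. You assert that ``$L(E,1)=0$ forces the Beilinson--Kato class $\frak{z}_0^{\textup{BK}}$ to lie in the (classical) Selmer group, so that $z^{\textup{BK}}$ lands in $H^1_f(\QQ_p,V^*)$,'' and then treat $[\exp^*_{\omega_E}(z^{\textup{BK}})]$ as ``the class determined by $z^{\textup{BK}}\in H^1_f(\QQ_p,V^*)$.'' But if $z^{\textup{BK}}$ lies in the ordinary local condition then $\exp^*_{\omega_E}(z^{\textup{BK}})=0$, the class $[\exp^*_{\omega_E}(z^{\textup{BK}})]$ vanishes, and Theorem A becomes $0=0$ — precisely the situation the paper dismisses when it observes that Theorem A ``says nothing particular when $r_{\textup{an}}(E)=1$.'' The case that matters is $r_{\textup{an}}(E)=0$, where $L(E,1)\neq 0$: then Kato's theorem makes $\textup{Sel}_p(E/\QQ)$ finite, the extended Selmer groups reduce via the isomorphism (\ref{eqn:isomnek1}) to the exceptional lines $H^0(G_p,F_p^-V)$ and $H^0(G_p,F_p^-V^*)$, and $z^{\textup{BK}}$ is \emph{not} crystalline at $p$. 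The class $[\exp^*_{\omega_E}(z^{\textup{BK}})]$ is the image of the scalar $\exp^*_{\omega_E}(z^{\textup{BK}})\in\QQ_p=H^0(G_p,F_p^-V^*)$ under (\ref{eqn:isomnek1}), exactly parallel to $[-\textup{ord}_p(q_E)^{-1}]$; both are exceptional classes, and the height pairing between them localizes at $p$ by Proposition~\ref{prop:computeheightgenerally} (which is what your second paragraph is groping for — that instinct is correct, but the first paragraph's setup is not).

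Even granting the corrected setup, your route diverges from the paper's in a way that loses the point of the statement. You black-box Greenberg--Stevens and Kato's $s=1$ reciprocity to evaluate the left-hand side and then compare with a local height. The paper never invokes Greenberg--Stevens in proving Theorem~\ref{thm:heightformular0}: it computes $\tfrac{d}{ds}L_p(E,s)\big|_{s=1}$ directly from the Iwasawa-theoretic reciprocity law (\ref{eqn:katofundamental}), the explicit Coleman map (\ref{eqn:colemanexplicit}), and the local derivative formula of Definition~\ref{define:derofLxi} and Lemma~\ref{lem:calculateder}, arriving at $\pm\langle\frak{C}_0,\textup{res}_p^s(\frak{z}^{\textup{BK}}_0)\rangle_{\textup{Tate}}$; and independently it computes the height pairing via \cite[Corollary 11.4.7]{nek} and Kobayashi's local identity. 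The two computations then match. Combined with (\ref{eqn:mttconj2}) this essentially \emph{re-derives} the Greenberg--Stevens formula, which is the interpretive content of Theorem A; importing Greenberg--Stevens at the outset makes the statement a tautological reformulation rather than a new proof, and in practice also forces you to reconcile sign and Euler-factor normalizations across three independently-stated theorems — the minus sign in $[-\textup{ord}_p(q_E)^{-1}]$, which in the paper falls cleanly out of the $\tau\mapsto\tau^{-1}$ convention in Definition~\ref{define:derofLxi}, would have to be chased by hand in your approach.
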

This computation should be compared to Benois' results in \cite{benoisajm11} and \cite[Proposition 2.2.4]{benois11nearcentral}.

Observe that when $r_{\textup{an}}(E)=1$, the theorem of Greenberg-Stevens shows that the left hand side of the assertion in Theorem A equals $zero$. Kato's reciprocity law in \cite{ka1} shows that $[\exp_{\omega_E}^*(z^{\textup{BK}})]=0$ as well. Hence, Theorem A says nothing particular when $r_{\textup{an}}(E)=1$. In this case, we shall prove Theorem B below.

When $r_{\textup{an}}(E)\leq 1$, a conjecture of Perrin-Riou (labeled by Conjecture~\ref{conj:PR} below) predicts that Kato's class $\frak{z}_0^{\textup{BK}}\in H^1(\QQ,V^*)$ is non-trivial. Shortly after posting the initial version of this article, the author was notified that R. Venerucci has (partially) proved this conjecture in his thesis\symbolfootnote[1]{We thank M. Bertolini for bringing Venerucci's work to our attention.}, by comparing Kato's class to a suitable Heegner point. We assume that the height pairing $\langle\,,\,\rangle_{\textup{Nek}}$ is non-degenerate. Let $\widetilde{\frak{Z}}_{\textup{BK}} \in \widetilde{H}_f^1(V)\cong\widetilde{H}_f^1(V^*)$ (the identification is via the Weil pairing) denote the lift of the normalized Beilinson-Kato element $\lambda_{\textup{BK}}\,{\cdot}\,{\frak{z}}^{\textup{BK}}_0$. The normalization factor $\lambda_{\textup{BK}}$ is defined in Section~\ref{subsec:definelambdaBK} where we also verify that $\lambda_{\textup{BK}}\neq0$ under our running hypothesis. See also Remark~\ref{rem:whatislambdaBK} where we explain the role this factor plays in our work. The splitting 
$$\frak{s}: \,\textup{Sel}_p(E/\QQ)\otimes\QQ_p\lra\widetilde{H}^1_f(X)\,\,\,\,\,\,\,\, (X=V,V^*)$$ which is used to lift $\lambda_{\textup{BK}}\,{\cdot}\,{\frak{z}}^{\textup{BK}}_0$ to $\widetilde{\frak{Z}}_{\textup{BK}}$ is that of \cite[11.4.2]{nek} and we recall its definition in Section~\ref{subsubsec:liftBKtotilde} for the convenience of the reader. Finally, let $\gamma_0 \in \Gamma$ be a fixed generator satisfying $\log_p(\rho_\textup{cyc}(\gamma_0))=p$.
\begin{thmb}[Theorem~\ref{thm:mainr1} below]
Suppose $r_{\textup{an}}(E)=1$, $E(\QQ)[p]=0$  and the height pairing $\langle\,,\,\rangle_{\textup{Nek}}$ is non-degenarate. 
Then,
$$\frac{1}{2}\left(\frac{d^2}{ds^2}(L_p(E,s))\Big|_{s=1}\right)\otimes \gamma_0=-\frac{1}{\lambda_{\textup{BK}}}\cdot\langle\widetilde{\frak{Z}}_{\textup{BK}},\widetilde{\frak{Z}}_{\textup{BK}} \rangle_{\textup{Nek}}\,,$$
where the equality takes place in $\Gamma$.
\end{thmb}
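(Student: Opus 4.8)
The plan is to descend from the Iwasawa-theoretic incarnation of $L_p(E,s)$ and then invoke the Rubin-style height formula established in Appendix~A. Recall that Kato's Euler system produces an Iwasawa-theoretic Beilinson-Kato class $\mathbf{z}^{\textup{BK}}\in H^1_{\textup{Iw}}(\QQ,T^*)$ whose bottom layer is $\frak{z}_0^{\textup{BK}}$ and whose local image at $p$ under Perrin-Riou's big logarithm (Coleman) map recovers the $p$-adic $L$-function $L_p(E,s)$, viewed as an element of $\LL\otimes\QQ_p$. Because $E$ has split multiplicative reduction at $p$, this Coleman-type map vanishes on the exceptional local condition $F^-_p$, which is the source of the exceptional zero of $L_p(E,s)$ at $s=1$; Greenberg-Stevens identify the associated $\mathcal{L}$-invariant with $\log_p(q_E)/\textup{ord}_p(q_E)$. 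When $r_{\textup{an}}(E)=1$ we have $L(E,1)=0$, so the Greenberg-Stevens formula --- equivalently, Theorem~A together with Kato's reciprocity law, which forces $[\exp^*_{\omega_E}(z^{\textup{BK}})]=0$ --- shows that $L_p'(E,1)=0$; hence $L_p(E,s)$ vanishes to order at least two at $s=1$ and its leading term sits at second order.

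The heart of the argument is then the Rubin-style formula of Appendix~A, which corrects Proposition~11.5.11 of~\cite{nek} so that it applies across an exceptional zero. Applied in our situation, it expresses the second-order term $\tfrac{1}{2} L_p''(E,1)$ directly as the value of Nekov\'a\v{r}'s cyclotomic height pairing evaluated on the lift $\widetilde{\frak{Z}}_{\textup{BK}}\in\widetilde{H}^1_f(V)\cong\widetilde{H}^1_f(V^*)$ paired with itself; the exceptional ($\mathcal{L}$-invariant) contribution is automatically carried by this pairing, precisely because it is defined on the \emph{extended} Selmer group, which builds in the exceptional local condition at $p$. To make this work one uses that $\frak{z}_0^{\textup{BK}}$ is non-trivial --- this is Venerucci's theorem towards Conjecture~\ref{conj:PR} --- so that $\widetilde{\frak{Z}}_{\textup{BK}}\neq0$; that $\frak{z}_0^{\textup{BK}}$ lies in $\textup{Sel}_p(E/\QQ)\otimes\QQ_p$, by Kato, so that it may be lifted via the splitting $\frak{s}$ of~\cite[11.4.2]{nek}; and the hypotheses $E(\QQ)[p]=0$ and the non-degeneracy of $\langle\,,\,\rangle_{\textup{Nek}}$, which secure the integral hypotheses behind Nekov\'a\v{r}'s formalism and guarantee that the right-hand side is nonzero. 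Passing from $\frak{z}_0^{\textup{BK}}$ to its normalization $\lambda_{\textup{BK}}\cdot\frak{z}_0^{\textup{BK}}$, as in Section~\ref{subsec:definelambdaBK}, accounts for the factor $1/\lambda_{\textup{BK}}$.

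It then remains to pin down the normalizations: the generator $\gamma_0$ with $\log_p(\rho_{\textup{cyc}}(\gamma_0))=p$ is the reference needed to record the number $\tfrac{1}{2} L_p''(E,1)$ inside $\QQ_p\otimes\Gamma$, and the sign $-1$ comes from comparing the orientation built into Nekov\'a\v{r}'s connecting/Bockstein maps with the sign conventions in the interpolation property that characterizes $L_p(E,s)$. The main obstacle I anticipate is the derivation of the Appendix~A formula itself: matching Nekov\'a\v{r}'s height --- constructed from the Bockstein homomorphism of the Selmer complex of the cyclotomic deformation --- with a genuine second derivative of $L_p(E,s)$, while keeping the exceptional local condition and every multiplicative constant (in particular the precise sign and the factor $\lambda_{\textup{BK}}$) under control. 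A reassuring consistency check is that specializing the whole argument to $r_{\textup{an}}(E)=0$ must reproduce Theorem~A, the non-exceptional-rank instance of the same mechanism, and that the outcome agrees with R. Venerucci's independently obtained results.
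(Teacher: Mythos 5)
Your outline correctly identifies the two external inputs (Venerucci's theorem for the non-triviality of $\frak{z}_0^{\textup{BK}}$, and the Rubin-style formula of Appendix~A in place of the flawed \cite[Prop.\ 11.5.11]{nek}), the role of the splitting $\frak{s}$, and the hypotheses. But there is a genuine gap in the middle of the argument: the Rubin-style formula (Corollary~\ref{cor:incaseallbutoneacyclic}) does \emph{not} ``express the second-order term $\tfrac12 L_p''(E,1)$ directly'' in terms of the height pairing. What it actually delivers is a purely local statement, namely
$$\left\langle \widetilde{\frak{Z}}_{\textup{BK}},\widetilde{\frak{Z}}_{\textup{BK}}\right\rangle_{\textup{Nek}} \;=\; -\lambda_{\textup{BK}}\bigl\langle \mathcal{D}(\frak{z}_\infty^{\textup{BK}}),\frak{Z}_p^+\bigr\rangle_{\textup{Tate}}\,,$$
where $\frak{Z}_p^+$ is the $p$-local $+$-component of the lifted class and $\mathcal{D}(\frak{z}_\infty^{\textup{BK}})$ is the Bockstein-normalized derivative. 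Two nontrivial steps are still needed, and your proposal does not address either. First, one must replace $\frak{Z}_p^+$ by the derived Coleman element $\frak{C}_0$; this is exactly what the specific numerical value of $\lambda_{\textup{BK}}$ is engineered to permit (Proposition~\ref{lambdaisthecorrectlambda} shows $\frak{C}_0-\frak{Z}_p^+$ annihilates $\mathcal{D}(\frak{z}_\infty^{\textup{BK}})$), and it is not a mere rescaling --- a naive rescaling $\frak{z}_0^{\textup{BK}}\mapsto\lambda_{\textup{BK}}\frak{z}_0^{\textup{BK}}$ would multiply the self-height by $\lambda_{\textup{BK}}^2$, not $\lambda_{\textup{BK}}$, so ``accounts for the factor $1/\lambda_{\textup{BK}}$'' as you phrase it cannot be right. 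Second, one must separately connect the Tate pairing $\langle\frak{C}_0,\Xi_0'\rangle_{\textup{Tate}}$ (with $\Xi_0'$ the derivative of $\textup{res}_p^s(\frak{z}_n^{\textup{BK}})$) to the genuine second derivative $L_p''(E,1)$; this is the ``derived-measure'' computation of Propositions~\ref{prop:congmodj3}--\ref{prop:derivedmeasurevsderivative}, which shows $(\gamma-1)^2/\log_p(\rho_{\textup{cyc}}(\gamma))^2 \cdot \mu_E \equiv \al_E \pmod{J^3}$ and hence $\tfrac12 L_p''(E,1)=\langle\frak{C}_0,\Xi_0'\rangle_{\textup{Tate}}$. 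Without these two ingredients you have the height on one side and a formal Iwasawa-theoretic expression on the other, but no bridge between them --- that bridge is the substance of the proof, not a normalization bookkeeping issue.
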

\begin{rem}
The reader might be concerned that the right hand side in Theorem B is independent of the choice of an isomorphism  $\kappa: \Gamma \ra 1+p\ZZ_p$, whereas the choice of the element $\gamma_0\in\Gamma$ relies on   the choice $\kappa=\rho_{\textup{cyc}}$. Note, however, that the definition of $L_p(E,s)$ (c.f., \S\ref{sec:heightformulas} below) also relies on the cyclotomic character $\rho_{\textup{cyc}}$ and the element $\left(\frac{d^2}{ds^2}(L_p(E,s))\big|_{s=1}\right)\otimes \gamma_0$ would remain unchanged if $\rho_{\textup{cyc}}$ was to be replaced by any other isomorphism $\kappa: \Gamma \ra 1+p\ZZ_p$.
\end{rem}

Our strategy to deduce Theorems A and B is rather straightforward once we dig into Nekov\'a\v{r}'s Selmer complex machine and it is more or less identical to what we have implemented in an earlier version (except in that, in order to prove Theorem B we had relied on an erroneous assertion \cite[Proposition~11.5.11]{nek} in a crucial way, which we replace with Corollary~\ref{cor:incaseallbutoneacyclic} proved as part of this article). The idea is basically to integrate the \emph{derivative} of the Coleman map against what might be considered as the \emph{derivative} of the Beilinson-Kato measure (associated to Beilinson-Kato elements) and to recover via this calculation the height of an appropriately normalized lift of the Beilinson-Kato element. In earlier versions of this paper there was also an ambiguity in the choice of $\lambda_{\textup{BK}}$ which we treat in this version in greater detail.


\begin{rem}
\label{rem:whatislambdaBK}
In this Remark we explain the role that $\lambda_{\textup{BK}}$ plays in this work. For $X=V,V^*$, let 
$$\frak{z}_\infty^{\textup{BK}}=\{\frak{z}^{\textup{BK}}_n\}\in \varprojlim H^1({\QQ_n},X)=H^1(\QQ,X\otimes\LL)$$
denote the Beilinson-Kato element whose key properties are outlined in Section~\ref{subsec:katobeilinson} below. Let $\lambda \in \ZZ_p$ be an arbitrary $p$-adic integer 
and let $[(\lambda\cdot\frak{z}_0^{\textup{BK}}, (z_\ell^+),(\mu_\ell))]=\widetilde{z} \in \widetilde{H}^1_f(X)$ be a lift of $\lambda\cdot{\frak{z}}^{\textup{BK}}_0$ under the splitting $\frak{s}$. We refer the reader to Section~\ref{subsubsec:selmer} to clarify our notation in the previous sentence (borrowed from \cite{nek}). Using Corollary~\ref{cor:incaseallbutoneacyclic} (which applies in our setting since the complex $C^\bullet(\textup{Gal}(\overline{\QQ}_\ell/\QQ_\ell),X)$ of continuos cochains is acyclic for $\ell\neq p$) we have
$$\langle \widetilde{z},\widetilde{z} \rangle_{\textup{Nek}}=-\lambda\cdot\langle \mathcal{D}(\frak{z}_\infty^{\textup{BK}}),z_p^+ \rangle_{\textup{Tate}}$$
where $\mathcal{D}(\frak{z}_\infty^{\textup{BK}})\in H^1(\QQ_p,F_p^-V)$ is the (Bockstein-normalized) \emph{derivative} of the Beilinson-Kato element $ \frak{z}_\infty^{\textup{BK}}$ (defined as in Lemma~\ref{lem:bocksteinnormalizedderivative}). Let $\frak{C}_0 \in H^1(\QQ_p,F_p^+V^*)$ denote the \emph{derivative of the Coleman map} given as in  (\ref{eqn:definederivedcoleman}). We choose $\lambda=\lambda_{\textup{BK}}$ in a way that for that choice $\lambda$ we have
\be\label{eqn:reducetodercolemanahainstderbk}\langle \mathcal{D}(\frak{z}_\infty^{\textup{BK}}),z_p^+ \rangle_{\textup{Tate}}=\langle \mathcal{D}(\frak{z}_\infty^{\textup{BK}}),\frak{C}_0 \rangle_{\textup{Tate}}\,.\ee
We then use the local calculations carried out in Section~\ref{subsub:localcomputationsgeneral} below in order to express the quantity in (\ref{eqn:reducetodercolemanahainstderbk}) as, roughly speaking, the derivative of the Coleman integrated against the derivative of the Beilinson-Kato measure, along the lines we have indicated above.
\end{rem}

Theorem B has the following immediate corollary:
\begin{corc}[Corollary~\ref{cor:only} below]
Under the assumptions of Theorem~B the Mazur-Tate-Teitelbaum conjecture \textup{(\ref{eqn:conjmtt1})} holds true.
\end{corc}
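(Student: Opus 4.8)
The plan is to derive the conjectural equality $\textup{ord}_{s=1}\,L_p(E,s)=1+r_{\textup{an}}(E)=2$ directly from the formula of Theorem B together with the vanishing of the lower-order terms supplied by the earlier results. First I would observe that under the running hypothesis $r_{\textup{an}}(E)=1$, the Gross--Zagier/Kolyvagin/Kato machinery forces $\textup{rank}_{\ZZ_p}\textup{Sel}_p(E/\QQ)=1$ and $\Sha(E/\QQ)[p^\infty]$ finite, so that $\widetilde{H}^1_f(V)$ is one-dimensional and the normalized Beilinson--Kato lift $\widetilde{\frak{Z}}_{\textup{BK}}$ is a basis vector of it (this uses $\lambda_{\textup{BK}}\neq0$, established in Section~\ref{subsec:definelambdaBK}, and Venerucci's partial proof of Conjecture~\ref{conj:PR} that $\frak{z}_0^{\textup{BK}}\neq0$). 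Combined with the assumed non-degeneracy of $\langle\,,\,\rangle_{\textup{Nek}}$, this gives $\langle\widetilde{\frak{Z}}_{\textup{BK}},\widetilde{\frak{Z}}_{\textup{BK}}\rangle_{\textup{Nek}}\neq0$ in $\QQ_p\otimes_{\ZZ_p}\Gamma$.

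Next I would unwind Theorem B: the identity
$$\frac{1}{2}\left(\frac{d^2}{ds^2}L_p(E,s)\Big|_{s=1}\right)\otimes\gamma_0=-\frac{1}{\lambda_{\textup{BK}}}\cdot\langle\widetilde{\frak{Z}}_{\textup{BK}},\widetilde{\frak{Z}}_{\textup{BK}}\rangle_{\textup{Nek}}$$
takes place in $\Gamma$ (more precisely in $\QQ_p\otimes_{\ZZ_p}\Gamma$ after identifying $\gamma_0$ with a generator), and since the right-hand side is nonzero by the previous paragraph, we conclude $\frac{d^2}{ds^2}L_p(E,s)\big|_{s=1}\neq0$, i.e. $\textup{ord}_{s=1}\,L_p(E,s)\leq2$. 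For the reverse inequality I would invoke the exceptional-zero phenomenon together with the Greenberg--Stevens theorem: the factor $(1-p^{1-s})(1-p^{-s})$ in the interpolation property vanishes to order one at $s=1$, and formula (\ref{eqn:mttconj2}) shows $\frac{d}{ds}L_p(E,s)\big|_{s=1}=\frac{\log_p(q_E)}{\textup{ord}_p(q_E)}\cdot\frac{L(E,1)}{\Omega_E^+}$, which is zero precisely because $L(E,1)=0$ when $r_{\textup{an}}(E)=1$; hence $\textup{ord}_{s=1}\,L_p(E,s)\geq2$. Also $L_p(E,1)=0$ from the exceptional zero itself, so $\textup{ord}_{s=1}\,L_p(E,s)\geq1$ is automatic, and the order-one derivative being zero pushes this to $\geq2$. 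Putting the two inequalities together yields $\textup{ord}_{s=1}\,L_p(E,s)=2=1+r_{\textup{an}}(E)$, which is exactly (\ref{eqn:conjmtt1}).

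The only genuine content here is Theorem B itself; the corollary is a formal consequence once one knows that the right-hand side of the Theorem~B formula is nonzero. The main obstacle, such as it is, is bookkeeping: one must check carefully that the hypotheses of Theorem B ($r_{\textup{an}}(E)=1$, $E(\QQ)[p]=0$, non-degeneracy of $\langle\,,\,\rangle_{\textup{Nek}}$) indeed force $\widetilde{H}^1_f(V)$ to be exactly one-dimensional so that $\widetilde{\frak{Z}}_{\textup{BK}}$, being a nonzero element of it (here $\lambda_{\textup{BK}}\neq0$ and $\frak{z}_0^{\textup{BK}}\neq0$ are both needed), is non-isotropic for the height pairing; the subtlety is that a priori a nonzero vector in a higher-dimensional space could be isotropic, so the rank-one statement is what makes the argument go through. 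I expect this to be a short verification given the theorems already in place, so the corollary follows immediately.
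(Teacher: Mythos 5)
Your proposal correctly identifies the shape of the argument — show the right-hand side of the Theorem~B formula is nonzero, combine with Greenberg--Stevens for the lower bound — but it contains a concrete error that undermines the key step. You assert that $\widetilde{H}^1_f(V)$ is one-dimensional under the hypotheses of Theorem~B; this is false. By Proposition~\ref{prop:compare} (with $H^0(G_{\QQ,S},V)=0$) there is a short exact sequence
$$0\lra H^0(G_p,F_p^-V)\lra \widetilde{H}^1_f(V)\lra \textup{Sel}_p(E/\QQ)\otimes\QQ_p\lra 0,$$
and since $F_p^-V\cong\QQ_p$ with trivial $G_p$-action we have $H^0(G_p,F_p^-V)=\QQ_p\ne0$. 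Thus when $r_{\textup{an}}(E)=1$ (so $\textup{Sel}_p(E/\QQ)\otimes\QQ_p$ is one-dimensional) the extended Selmer group is \emph{two}-dimensional. This extra dimension is precisely the source of the exceptional zero and the ``$+1$'' in the conjectured order of vanishing; it cannot go away. Consequently the worry you yourself raise — that a nonzero vector in a space of dimension $\ge 2$ may be isotropic even for a non-degenerate pairing — is a genuine obstruction here, and your appeal to ``the rank-one statement'' does not discharge it.

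The paper closes this gap with an input you do not use: Nekov\'a\v{r}'s Proposition~11.4.9, cited in the proof of Corollary~\ref{cor:only}, which shows that non-degeneracy of $\langle\,,\,\rangle_{\textup{Nek}}$ on the full $\widetilde{H}^1_f(V)$ is equivalent to non-degeneracy of its restriction to the one-dimensional subspace $\frak{s}(\textup{Sel}_p(\QQ,V))\otimes\frak{s}(\textup{Sel}_p(\QQ,V^*))$. On a one-dimensional space, non-degeneracy is the same as non-isotropy of any nonzero vector, so $\langle\frak{s}(\frak{z}_0^{\textup{BK}}),\frak{s}(\frak{z}_0^{\textup{BK}})\rangle_{\textup{Nek}}\neq0$, and then $\langle\widetilde{\frak{Z}}_{\textup{BK}},\widetilde{\frak{Z}}_{\textup{BK}}\rangle_{\textup{Nek}}=\lambda_{\textup{BK}}^2\cdot\langle\frak{s}(\frak{z}_0^{\textup{BK}}),\frak{s}(\frak{z}_0^{\textup{BK}})\rangle_{\textup{Nek}}\neq0$ using $\lambda_{\textup{BK}}\neq0$ (Proposition~\ref{prop:lambdaisnonvanishing}). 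That is the missing ingredient; the rest of your proposal (the lower bound $\textup{ord}_{s=1}L_p(E,s)\ge2$ via the Greenberg--Stevens formula and the vanishing of $L(E,1)$, and the derivation of the upper bound from non-vanishing of the second derivative) is sound.
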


\begin{rem}
\label{rem:venerucci}
A result similar to Theorem~B above has been obtained independently by R. Venerucci, see in particular Corollary 12.32 of his thesis~\cite{veneruccithesis}. 

We remark further that Venerucci's expression for the second derivative of $L_p(E,s)$ at $s=1$  in terms of a $2\times2$ regulator fits better with the perspective offered by ($p$-adic) Beilinson conjectures. On the other hand $d^2/ds^2L_p(E,s)\mid_{s=1}$ is expected to be related to $L^\prime(E,1)$ in this particular setting, for which reason we found it desirable to express this quantity in terms of the ($p$-adic) height of a single element, much in the spirit of the classical Gross-Zagier formula.
\end{rem}
Let $A/\QQ$ be an elliptic curve with good ordinary reduction at $p$. When $\textup{ord}_{s=1}\, L(A,s)=1$, one may compare the order of vanishing of the Mazur-Tate-Teitelbaum $p$-adic $L$-function $L_p(A,s)$ to that of the complex Hasse-Weil $L$-function $L(A,s)$ (as in Corollary C), by making use of the results of \cite{schneider85inventiones} and \cite{pr93abvar}, along with the recent proof of Skinner and Urban~\cite{skinnerurbanmainconjpreprint} of Mazur's main conjecture. Note however that this comparison would still require the non-degeneracy of a certain $p$-adic height pairing. Corollary C in this sense extends the results Schneider and Perrin-Riou to the case when the elliptic curve $E$ in question has split multiplicative reduction at $p$ (in which case the $p$-adic $L$-function attached to $E$ possesses an exceptional zero).

We briefly outline the plan of the paper. In Section~\ref{subsubsec:selmer}, we introduce Nekov\'a\v{r}'s Selmer complexes (whose cohomology yields his \emph{extended Selmer groups}) and discuss their relation with various Selmer groups. In Section~\ref{subsubsec:heights}, we recall Nekov\'a\v{r}'s definition of height pairings in great generality. In Section~\ref{subsub:localcomputationsgeneral}, we carry out a local computation with the local Tate pairing (still in great generality) which is essential for the height calculations in Section~\ref{sec:heightformulas}. In Section~\ref{subsec:colemanmapfortate} (resp., in Section~\ref{subsec:katobeilinson}), we define the Coleman map (resp., introduce Beilinson-Kato elements), which are used to define the elements of the extended Selmer groups on which we shall compute Nekov\'a\v{r}'s height pairing (and compare to the derivatives of the $p$-adic $L$-function $L_p(E,s)$). Once these elements are defined, we carry out the height computations in Section~\ref{subsec:compuationr0} in the case $r_{\textup{an}}(E)=0$ and in Section~\ref{subsec:compuationr1} in the case $r_{\textup{an}}(E)=1$.
\section{Generalities on Nekov\'a\v{r}'s Theory of Selmer complexes}
Let $G$ be a profinite group (given the profinite topology) and let $R$ be a complete discrete valuation ring with finite residue field of characteristic $p$. Let $X$ be a free $R$-module of finite type on which $G$ acts continuously. In this section we very briefly review Nekov\'{a}\v{r}'s theory of Selmer complexes and his definition of extended Selmer groups. Although the treatment in this section is far more general than what is needed for the purposes of this paper (e.g., from~\S\ref{subsec:compuationr0} on $K$ will be $\QQ$ and the Galois module $X$ considered below will be  $T$ or $T^*$ (in degree zero)),  it is still much less general than what is covered in~\cite{nek}.

The $G$-module $X$ is admissible in the sense of~\cite[\S3.2]{nek} and we can talk about the complex of \emph{continuous} cochains $C^\bullet(G,X)$ as in~\S3.4 of {loc.cit}. Let $K$ be a number field and for a finite set  $S$ of places of $K$, let  $S_f$ denote the subset of finite places within $S$. We denote by $K_S$ the maximal subextension of $\overline{K}/K$ which is unramified outside $S$ and set $G_{K,S}$ to be the Galois group $\textup{Gal}(K_S/K)$. For all $w \in S_f$, we write $K_w$ for the completion of $K$ at $w$, and $G_w$ for its absolute Galois group. Whenever it is convenient, we will identify $G_w$ with a decomposition subgroup inside $G_K:=\textup{Gal}(\overline{K}/K)$. We will be interested in the cases when $G=G_{K,S}$ or $G=G_w$ and in the former case, $S$ is chosen to contain all primes above $p$, all primes at which $G$ representation $X$ is ramified and all infinite places of $K$.

\subsection{Selmer complexes}
\label{subsubsec:selmer}
Classical Selmer groups are defined as a subgroup of elements of the global cohomology group $H^1(G_{K,S},X)$ satisfying certain local conditions; see~\cite[\S2.1]{mr02} for the most general definition. The main idea of~\cite{nek} is to impose local conditions on the level of complexes. We go over basics of Nekov\'{a}\v{r}'s theory, for details see~\cite{nek}.

\begin{define}
\label{def:local condition}
\emph{Local conditions} on $X$ are given by a collection $\Delta(X)=\{\Delta_w(X)\}_{w\in S_f}$, where $\Delta_w(X)$ stands for a morphism of complexes of $R$-modules
$$i_w^+(X): U_w^+\lra C^\bullet(G_w,X)$$
 for each $w\in S_f$.
\end{define}

Also set
$$U_v^-(X)=\textup{Cone}\left( U_v^+(X) \stackrel{-i_v^+}{\lra} C^\bullet(G_v,X) \right)$$
and
$$U_S^{\pm}(X)=\bigoplus_{w\in S_f} U_w^{\pm}(X); \,\,\,\,i_S^+(X)= (i_w^+(X))_{w\in S_f}.$$
We also define
$$\textup{res}_{S_f}: C^\bullet(G_{K,S},X) \lra \bigoplus_{w\in S_f}C^\bullet(G_w,X)$$ as the canonical restriction morphism.
\begin{define}
\label{def: selmer complex}
The \emph{Selmer complex} associated with the choice of local conditions $\Delta(X)$ on $X$ is given by the complex
$$\xymatrix@C=.27in{
\widetilde{C}_f^\bullet(G_{K,S},X,\Delta(X)):= \textup{Cone}(C^\bullet(G_{K,S},X)\bigoplus U_S^+(X)\ar[rr]^(.65){\textup{res}_{S_f}-i_{S}^+(X)}&&\bigoplus_{w\in S_f} C^\bullet(G_w,X))[-1]
}$$
 where $[n]$ denotes a shift by $n$. The corresponding object in the derived category will be denoted by $\widetilde{\mathbf{R}\Gamma}_f(G_{K,S},X,\Delta(X))$ and its cohomology by $\widetilde{H}^i_f(G_{K,S},X,\Delta(X))$ (or simply by $\widetilde{H}^i_f(K,X)$ or by $\widetilde{H}^i_f(X)$ when there is no danger of confusion). The $R$-module $\widetilde{H}^1_f(X)$ will be called the \emph{extended Selmer group}.

The object in the derived category corresponding to the complex $C^\bullet(G_{K,S},X)$ will be denoted by ${\mathbf{R}\Gamma}(G_{K,S},X)$.
\end{define}

\subsubsection{Comparison with classical Selmer groups}
\label{subsubsec:comparison}
For each $w \in S_f$, suppose that we are given a submodule
$$H^1_\FF(K_w,X)\subset H^1(K_w,X).$$
The data which  $\FF$ encodes is called a \emph{Selmer structure} on $M$. Starting with $\FF$, one defines the Selmer group as
$$H^1_\FF(K,X):=\ker\left\{H^1(G_{K,S},X)\lra \bigoplus_{w\in S_f}\frac{H^1(K_w,X)}{H^1_\FF(K_w,X)}\right\}.$$

On the other hand,  as explained in~\cite[\S6.1.3.1-2]{nek}, there is an exact triangle
$$
U_S^-(X)[-1]\lra \widetilde{\mathbf{R}\Gamma}_f(G_{K,S},X,\Delta(X)) \lra {\mathbf{R}\Gamma}(G_{K,S},X)\lra U_S^-(X)
$$
which  gives rise to the following exact sequence in the level of cohomology that is used to compare Nekov\'{a}\v{r}'s extended Selmer groups  to classical Selmer groups.
\begin{prop}[\textup{\cite[\S0.8.0 and \S9.6]{nek}}]\label{prop:compare1}
For each $i$, the following sequence is exact:
$$\dots\lra H^{i-1}(U_S^-(X))\lra \widetilde{H}^i_f(X) \lra H^i(G_{K,S},X)\lra H^i(U_S^-(X))\lra \dots$$
\end{prop}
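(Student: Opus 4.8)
The plan is to derive the long exact sequence of Proposition~\ref{prop:compare1} directly from the exact triangle displayed just above it, namely
$$
U_S^-(X)[-1]\lra \widetilde{\mathbf{R}\Gamma}_f(G_{K,S},X,\Delta(X)) \lra {\mathbf{R}\Gamma}(G_{K,S},X)\lra U_S^-(X),
$$
which in turn is a formal consequence of the definition of the Selmer complex as a mapping cone. First I would recall that for any morphism of complexes $f\colon A^\bullet\to B^\bullet$ there is a distinguished triangle $A^\bullet\xrightarrow{f}B^\bullet\to\textup{Cone}(f)\to A^\bullet[1]$, and that shifting by $[-1]$ and rotating produces a triangle $\textup{Cone}(f)[-1]\to A^\bullet\xrightarrow{f}B^\bullet\to\textup{Cone}(f)$. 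Applying this to the defining morphism
$$
\textup{res}_{S_f}-i_S^+(X)\colon C^\bullet(G_{K,S},X)\oplus U_S^+(X)\lra \bigoplus_{w\in S_f}C^\bullet(G_w,X),
$$
whose shifted cone is by Definition~\ref{def: selmer complex} precisely $\widetilde{C}_f^\bullet(G_{K,S},X,\Delta(X))$, gives the triangle above once one identifies the cone of $\bigoplus_w C^\bullet(G_w,X)\to (C^\bullet(G_{K,S},X)\oplus U_S^+(X))[1]$ appearing after rotation with $U_S^-(X)$ up to the relevant shift; this identification is exactly the content of the definition $U_v^-(X)=\textup{Cone}(U_v^+(X)\xrightarrow{-i_v^+}C^\bullet(G_v,X))$ together with the fact that $C^\bullet(G_{K,S},X)$ contributes an acyclic piece to the relevant cone (the restriction map $\textup{res}_{S_f}$ being split in an appropriate derived sense, as in \cite[\S6.1.3]{nek}). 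I would simply cite \cite{nek} for the triangle itself, as the excerpt already displays it, and treat the triangle as the genuine starting point.

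Given the triangle, the long exact sequence is obtained by applying the cohomology functor $H^i(-)$, which converts any distinguished triangle $A\to B\to C\to A[1]$ in the derived category of $R$-modules into a long exact sequence $\cdots\to H^i(A)\to H^i(B)\to H^i(C)\to H^{i+1}(A)\to\cdots$. Here I would set $A=U_S^-(X)[-1]$, $B=\widetilde{\mathbf{R}\Gamma}_f(G_{K,S},X,\Delta(X))$, and $C={\mathbf{R}\Gamma}(G_{K,S},X)$, so that $H^i(A)=H^{i-1}(U_S^-(X))$, $H^i(B)=\widetilde{H}^i_f(X)$, and $H^i(C)=H^i(G_{K,S},X)$, while $H^{i+1}(A)=H^i(U_S^-(X))$. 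Substituting these into the general long exact sequence yields precisely
$$
\cdots\lra H^{i-1}(U_S^-(X))\lra \widetilde{H}^i_f(X) \lra H^i(G_{K,S},X)\lra H^i(U_S^-(X))\lra\cdots,
$$
which is the assertion of the proposition. The only point requiring a word of care is the correct bookkeeping of the shift: one must check that the shift $[-1]$ in the defining cone of Definition~\ref{def: selmer complex} is compatible with the shift $[-1]$ on $U_S^-(X)$ in the triangle, so that the connecting map indeed lands in $H^i(U_S^-(X))$ in degree $i$ and not in an off-by-one degree; this is routine sign-and-index chasing in the conventions of \cite{nek}.

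The main (and essentially only) obstacle is not mathematical depth but fidelity to the homological conventions of \cite{nek}: cone, shift, and triangle-rotation conventions vary between sources, and the identification of the rotated cone with $U_S^-(X)[-1]$ — rather than $U_S^-(X)[1]$ or $U_S^+(X)$ — must be made with the same sign conventions that \cite{nek} uses in \S6.1.3, where this triangle is established. Since the excerpt already grants us the exact triangle as stated, I would not re-derive it; the proof of Proposition~\ref{prop:compare1} then reduces to the purely formal step of taking cohomology, and I would present it in two or three lines with the index substitution made explicit, citing \cite[\S0.8.0 and \S9.6]{nek} for the precise form of the connecting homomorphisms and the naturality of the sequence.
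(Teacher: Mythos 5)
Your proposal is correct and takes essentially the same route as the paper: the paper itself simply records the exact triangle $U_S^-(X)[-1]\to\widetilde{\mathbf{R}\Gamma}_f\to{\mathbf{R}\Gamma}(G_{K,S},X)\to U_S^-(X)$ from \cite[\S6.1.3.1-2]{nek} and observes that the proposition is the associated long exact cohomology sequence, which is exactly the formal step you make explicit, with the same index bookkeeping $H^i(U_S^-(X)[-1])=H^{i-1}(U_S^-(X))$. (Your parenthetical sketch of how the triangle itself arises from the mapping-cone definition is informal and not quite a derivation, but since you and the paper both treat the triangle as given by citation to \cite{nek}, this does not affect the proof of the proposition.)
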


When Nekov\'a\v{r}'s Selmer complex is given by a choice of \emph{Greenberg local conditions}, the associated extended Selmer group compares to an appropriately defined \emph{Greenberg Selmer groups}), whose definitions we now recall. For further details see~\cite{g1, gr, nek}. Let ${I}_w$ denote the inertia subgroup of $G_w$. Suppose we are given an $R[[G_w]]$-submodule $F_w^+X$ of $X$ for each place $w|p$ of $K$, set $F^-_wX=X/F^-_wX$. Then Greenberg's local conditions (in the sense of~\cite[\S6 and \S7]{nek}) are given by
$$U_w^+(X)=\left\{
\begin{array}{cl}
  C^\bullet(G_w,F_w^+X)& \hbox{ if } w|p,    \\\\
  C^\bullet(G_w/I_w,X^{I_w})& \hbox{ if } w\nmid p
\end{array}
\right.$$
with the obvious choice of morphisms
$$i_w^+(X): U_w^+(X)\lra C^\bullet(G_w,X).$$
As in Definition~\ref{def: selmer complex}, we then obtain a Selmer complex and an extended Selmer group, which we denote by $\widetilde{H}^1_f(X)$. Greenberg's local conditions are the only type of local conditions we will deal with from now on.

We now define the relevant Greenberg Selmer structure 
$\FF$ on $M$:
\begin{define}
\label{def:can-selmer}
The \emph{strict Greenberg Selmer structure} $\FF$ is given by
$$H^1_{\FF}(K_w,X)=\left\{
\begin{array}{cl}
   \textup{im}\left(H^1(G_w,F_w^+X)\ra H^1(G_w,X)\right)& \hbox{ \,\,\,\,if } w|p,    \\\\
\ker \left(H^1(G_w,X)\ra H^1(I_w,X) \right)& \hbox{\,\,\,\, if } w\nmid p.
\end{array}
\right.
$$
\end{define}

\begin{rem}
\label{rem:cohomatellvanishes}
When $X=V$, it follows from \cite[Corollary 3.3(i)]{r00} and the proof of \cite[Proposition 6.7]{r00} that $H^1_{\FF}(K_w,V)=0$ for every $w\nmid p$.
\end{rem}

Associated to the Selmer structure $\FF$, we have the following Selmer group (which is called the \emph{strict Selmer group} in~\cite[\S9.6.1]{nek} and denoted by $S_X^{\textup{str}}(K)$):
\be
\label{eqn:selmer group}
H^1_{\FF}(K,X)=\ker\left(H^1(G_{K,S},X)\lra \bigoplus_{w|p}H^1(G_w,F_w^-X)\oplus \bigoplus_{w\nmid p} H^1(I_w,X)   \right).
\ee

Proposition~\ref{prop:compare1} implies directly that:
\begin{prop}
\label{prop:compare}
The following sequence is exact:
$$H^0(G_{K,S},X) \lra \bigoplus_{w|p}H^0(G_w, F_w^-X) \lra \widetilde{H}^1_f(X)\lra H^1_{\FF}(K,X)\lra 0.$$
\end{prop}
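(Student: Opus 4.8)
The statement to prove is Proposition~\ref{prop:compare}, which asserts exactness of
$$H^0(G_{K,S},X) \lra \bigoplus_{w|p}H^0(G_w, F_w^-X) \lra \widetilde{H}^1_f(X)\lra H^1_{\FF}(K,X)\lra 0.$$

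The plan is to derive this directly from Proposition~\ref{prop:compare1} by identifying the terms $H^i(U_S^-(X))$ explicitly for the strict Greenberg local conditions, and then splicing the long exact sequence.

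First I would recall that, by definition, $U_w^-(X) = \textup{Cone}(U_w^+(X) \to C^\bullet(G_w,X))$, so that there is a long exact cohomology sequence relating $H^i(U_w^+(X))$, $H^i(G_w,X)$ and $H^i(U_w^-(X))$. For $w \mid p$ the strict Greenberg condition has $U_w^+(X) = C^\bullet(G_w, F_w^+ X)$, and the short exact sequence $0 \to F_w^+X \to X \to F_w^-X \to 0$ of $R[[G_w]]$-modules then identifies $U_w^-(X)$ (up to quasi-isomorphism and shift) with $C^\bullet(G_w, F_w^-X)$; concretely $H^i(U_w^-(X)) \cong H^i(G_w, F_w^-X)$. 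For $w \nmid p$ the condition is $U_w^+(X) = C^\bullet(G_w/I_w, X^{I_w})$, i.e. the unramified cochains, and the cone computes the quotient; here $H^1(U_w^-(X))$ receives the singular quotient $H^1(G_w,X)/H^1_\FF(K_w,X) = H^1(G_w,X)/H^1_{\textup{ur}}(G_w,X)$, which injects into $H^1(I_w,X)^{G_w/I_w}$, while $H^0(U_w^-(X)) = 0$ because $U_w^+(X) \to C^\bullet(G_w,X)$ is an isomorphism on $H^0$. Taking direct sums over $w \in S_f$ gives $H^0(U_S^-(X)) = \bigoplus_{w|p} H^0(G_w, F_w^-X)$ and a description of $H^1(U_S^-(X))$ whose "kernel from the global map" is exactly the defining condition of $H^1_\FF(K,X)$ in \eqref{eqn:selmer group}.

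Next I would write down the relevant segment of the long exact sequence of Proposition~\ref{prop:compare1} in degrees $i=0,1$:
$$H^0(U_S^-(X))[-1]=0 \to \widetilde{H}^0_f(X) \to H^0(G_{K,S},X) \to H^0(U_S^-(X)) \to \widetilde{H}^1_f(X) \to H^1(G_{K,S},X) \to H^1(U_S^-(X)).$$
Substituting $H^0(U_S^-(X)) = \bigoplus_{w|p} H^0(G_w,F_w^-X)$ gives exactness of the first three maps of the claimed sequence. For the last two terms, the map $\widetilde{H}^1_f(X) \to H^1(G_{K,S},X)$ has image equal to the kernel of $H^1(G_{K,S},X) \to H^1(U_S^-(X))$; I would then observe that this kernel is precisely $H^1_\FF(K,X)$, because the composite $H^1(G_{K,S},X) \to H^1(U_S^-(X))$ factors through $\bigoplus_{w|p} H^1(G_w,F_w^-X) \oplus \bigoplus_{w\nmid p} H^1(I_w,X)$ via the restriction and quotient maps (for $w \nmid p$ this uses that the singular quotient injects into $H^1(I_w,X)$, and for $w \mid p$ that the connecting map $H^1(G_w,X) \to H^1(U_w^-(X))$ is identified with $H^1(G_w,X) \to H^1(G_w,F_w^-X)$). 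Hence $\widetilde{H}^1_f(X) \to H^1_\FF(K,X)$ is surjective, and the four-term sequence is exact, which is the assertion.

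The main obstacle I anticipate is the bookkeeping at primes $w \nmid p$: one must check that the relevant piece of $H^1(U_S^-(X))$ that controls surjectivity onto $H^1_\FF(K,X)$ really is captured by $H^1(I_w,X)$ rather than a strictly larger group, i.e. that the map $H^1(G_w,X)/H^1_{\textup{ur}}(G_w,X) \hookrightarrow H^1(I_w,X)$ used to rewrite \eqref{eqn:selmer group} is injective and compatible with the cone construction — this is exactly the inflation–restriction argument together with $H^0(U_w^-(X))=0$, but it requires care to state cleanly. Everything else is a formal consequence of the exact triangle $U_S^-(X)[-1]\to \widetilde{\mathbf{R}\Gamma}_f \to \mathbf{R}\Gamma \to U_S^-(X)$ together with the explicit shape of the Greenberg local conditions, so no genuinely new input beyond Proposition~\ref{prop:compare1} is needed.
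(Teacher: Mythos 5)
Your proof is correct and is exactly the argument the paper intends when it says Proposition~\ref{prop:compare1} ``implies directly'' the result: one identifies $H^0(U_S^-(X))=\bigoplus_{w\mid p}H^0(G_w,F_w^-X)$ and $\ker\bigl(H^1(G_{K,S},X)\to H^1(U_S^-(X))\bigr)=H^1_{\FF}(K,X)$ using the cone description of the Greenberg local conditions, and then reads off the four-term exactness from the long exact sequence. The worry you flag at the end is already resolved by the inflation--restriction argument you give (the image of inflation from $H^1(G_w/I_w,X^{I_w})$ is precisely $\ker\bigl(H^1(G_w,X)\to H^1(I_w,X)\bigr)$), so no further care is needed there.
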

When the coefficient ring $R$ is an integral domain, we let $F$ to be its field of fractions. Set $X_F=X\otimes F$ and $F_w^\pm X_F= (F_w^\pm X)\otimes F$. The true Selmer group $\textup{Sel}(K,X)$ is defined as
$$\textup{Sel}(K,X)=\ker\left(H^1(G_{K,S},X)\lra \bigoplus_{w|p}H^1(I_w,F_w^-X_F)\oplus \bigoplus_{w\nmid p} H^1(I_w,X_F)  \right).$$
We also define $H^1_{\FF}(K,X_F)=H^1_{\FF}(K,X)\otimes F$ and $\textup{Sel}(K,X_F)=\textup{Sel}(K,X)\otimes F$.
\begin{rem}
\label{rem:comparestrictselmertoselmer} Note that in case $H^0(G_w, F_w^-X)=0$ for all $w|p$, then the extended Selmer group $\widetilde{H}^1_f(X)$ coincides with the Selmer group $H^1_{\FF}(K,X)$. However, if some $H^0(G_w, F_w^-X)\neq 0$ then $\widetilde{H}^1_f(X)$ is strictly larger than $H^1_{\FF}(K,X)$ (under the assumption that $X^{G_K}$=0, say). This is the main feature of Nekov\'{a}\v{r}'s Selmer complexes: They reflect the existence of exceptional zeros, unlike classical Selmer groups.
\end{rem}

\begin{rem}
\label{rem:compareXT}
In this remark, let $X=T$, $X_F=V$  and $K=\QQ$. It is well-known (c.f., \cite{coatesgreenberg, greenbergiwthell}) that the Selmer group $H^1_{\FF}(\QQ,T)$ compares to the true Selmer group $\textup{Sel}_p(E/\QQ)=\textup{Sel}(\QQ,T)$ by the following exact sequence:
$$0\lra H^1_{\FF}(\QQ,T)\lra \textup{Sel}_p(E/\QQ) \lra H^1(G_p,F_p^-T)_{\textup{tor}}\oplus {\left(\displaystyle{\bigoplus_{\ell \in S_f-\{p\}}}\frak{t}_\ell\right)}$$
where $\frak{t}_\ell={{\ker(H^1(G_\ell,T)}\ra H^1(I_\ell,V))}\big{/}{ \ker(H^1(G_\ell,T)\ra H^1(I_\ell,T))}$. In our setting, the $\ZZ_p$-module $H^1(G_p,F_p^-T)=\textup{Hom}(G_p,\ZZ_p)$ is torsion free and the order of $\frak{t}_\ell$ equals the $p$-part of the Tamagawa factor at $\ell$. We therefore conclude at once that $H^1_{\FF}(\QQ,T)$ is a subgroup of  $\textup{Sel}_p(E/\QQ)$ of finite index, and further infer that:

$\bullet$ $H^1_{\FF}(\QQ,T)=\textup{Sel}_p(E/\QQ)$ if
\begin{itemize}
\item[(i)] $p$ is prime to all Tamagawa factors of $E$ or if,
\item[(ii)] $\textup{Sel}_p(E/\QQ)=0$.
\end{itemize}

$\bullet$ In general, $H^1_{\FF}(\QQ,V)=\textup{Sel}_p(E/\QQ)\otimes \QQ_p$\,.
\end{rem}
\subsection{Height pairings}
\label{subsubsec:heights}
We now recall Nekov\'{a}\v{r}'s definition of height pairings on his extended Selmer groups. All the references in this section are to~\cite[\S11]{nek} unless otherwise stated. Until the end, we assume that $K=\QQ$.

Let $X^*=\textup{Hom}(X,R)(1)$ (in Nekov\'{a}\v{r}'s language this is $\mathcal{D}(X)(1)$, the Grothendieck dual of $X$) and $X_F^*=\textup{Hom}(X_F,F)(1)$. Let  $\Gamma$ be the Galois group $\textup{Gal}(\QQ_{\infty}/\QQ)$. Nekov\'a\v{r}'s height pairing
$$\xymatrix{
\langle\,,\,\rangle_{\textup{Nek}}:\,\, \widetilde{H}^1_f(X) \otimes_{R}\widetilde{H}^1_f(X^*)\ar[r]&R \otimes_{\ZZ_p}\Gamma
}$$
 is defined in two steps:
\begin{itemize}
\item[(i)] Apply the \emph{Bockstein} morphism
$$\xymatrix{\beta:\widetilde{\mathbf{R}\Gamma}_f(X) \ar[r]& \widetilde{\mathbf{R}\Gamma}_f(X)[1]\otimes_{\ZZ_p}\Gamma}$$
  See~\cite[\S11.1.3]{nek} and Appendix A for a precise definition of $\beta$. Let $\beta^1$ denote the map induced on the level of cohomology:
$$\beta^1:\,\, \widetilde{H}_f^1(X) \lra \widetilde{H}^2_f(X)\otimes_{\ZZ_p}\Gamma.$$
\item[(ii)] Use the \emph{Poitou-Tate global duality} pairing
$$\langle\,,\,\rangle_{\textup{PT}}:\,\, \widetilde{H}^2_f(X)\otimes_{R}\widetilde{H}^1_f(X^*) \lra R$$
 on the image of $\beta^1$ inside of $\widetilde{H}^2_f(X)\otimes\Gamma$. Here the global pairing comes from summing up the invariants of the local cup product pairing, see~\cite[\S6.3]{nek} and Definition~\ref{define:globalcupproducpairing} below for more details.
\end{itemize}
Any choice of a homomorphism $\kappa:\Gamma\ra F$ induces an $F$-valued height pairing
$$\langle\,,\,\rangle_{\textup{Nek},\kappa}:\,\, \widetilde{H}^1_f(X_F) \otimes_{R}\widetilde{H}^1_f(X_F^*) \lra F\,\,.$$

\subsection{Computations with the local Tate pairing}
\label{subsub:localcomputationsgeneral}

For $X$ and $X^*$ as above, we set $K=\QQ$ and let $\langle\,,\,\rangle_{\textup{Tate}}: H^1(\Phi_n,X) \otimes  H^1(\Phi_n,X^*) \ra R$
denote the local Tate-pairing. Fix elements $\xi=\{\xi_n\} \in \varprojlim H^1(\Phi_n,X)$ and $\mathbf{z}=\{z_n\} \in \varprojlim H^1(\Phi_n,X^*(1))$ and define
$$\al_{\xi}^{(n)}=\sum_{\tau\in\Gamma_n}\langle \xi_n,z_n^\tau \rangle_{\textup{Tate}}\cdot\tau \in R[\Gamma_n]\,.$$
The elements $\al_{\xi}^{(n)}$ are compatible with respect to restriction maps $R[\Gamma_n]\ra R[\Gamma_m]$ for $m\geq n$ and we may therefore define $\al_\xi=\lim \al_{\xi}^{(n)} \in R[[\Gamma]]$.
\begin{define}
\label{define:derofLxi} Suppose $\xi_0=0$. In this case, we define
\begin{align*}
\textup{Der}_{\rho_{\textup{cyc}}}(\al_\xi)(z_0)&:=\lim_{n\ra\infty}\sum_{\tau\in\Gamma_n} \log_p(\rho_{\textup{cyc}}(\tau^{-1}))\cdot\langle\xi_n^{\tau},z_n\rangle_{\textup{Tate}}\\
&=-\lim_{n\ra\infty}\sum_{\tau\in\Gamma_n} \log_p(\rho_{\textup{cyc}}(\tau))\cdot\langle\xi_n^{\tau},z_n\rangle_{\textup{Tate}}.
\end{align*}
\end{define}
Here we make sense of $\rho_{\textup{cyc}}(\tau)$ as follows for $\tau \in \Gamma_n$. Choose any lift $\tilde{\tau}\in\Gamma$ of $\tau$ and set $\rho_{\textup{cyc}}(\tau)=\rho_{\textup{cyc}}(\tilde{\tau})$. The value of  $\log_p(\rho_{\textup{cyc}}(\tau))$ is therefore well-defined modulo $p^n$, but the limit above clearly does not depend on the choice of lifts $\tilde{\tau}$. See \cite[Lemma 5.9]{kbbheights} for a proof that this limit exists.
\begin{lemma}
\label{lem:derexists}
Suppose $\xi_0=0$. There is an element $\xi^\prime =\{\xi^\prime_n\} \in \varprojlim H^1(\Phi_n,X)$ such  that $\xi=\frac{(\gamma-1)}{\log_p(\rho_{\textup{cyc}}(\gamma))}\cdot\xi^\prime$. Furthermore, $\xi^\prime$ is uniquely determined when the $\LL$-module $\varprojlim H^1(\Phi_n,X)$ has no $(\gamma-1)$-torsion.
\end{lemma}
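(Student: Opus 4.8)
The plan is to prove Lemma~\ref{lem:derexists} by a direct cohomological descent argument, exploiting the hypothesis $\xi_0 = 0$. First I would recall that $\varprojlim H^1(\Phi_n, X) = H^1(\QQ_p, X\otimes\LL)$ by Shapiro's lemma, so that $\xi$ is an element of a module over the Iwasawa algebra $\LL = \ZZ_p[[\Gamma]]$. Under the identification $\LL \cong \ZZ_p[[\gamma-1]]$ (sending $\gamma\mapsto 1+(\gamma-1)$), the hypothesis $\xi_0 = 0$ says precisely that the image of $\xi$ under the augmentation map $\LL\ra\ZZ_p$, i.e. under reduction modulo the ideal $J = (\gamma-1)$, vanishes. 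Here one uses that specialization at the trivial character computes $H^1(\Phi_0, X) = H^1(\QQ_p, X)$; more precisely, the corestriction-compatible system $\xi$ maps to $\xi_0$ under $H^1(\QQ_p, X\otimes\LL)\ra H^1(\QQ_p, X)$, and this map factors through $(X\otimes\LL)_{\Gamma} = X$.

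The core of the argument is then the observation that $\xi$ lies in the kernel of multiplication-by-augmentation, hence in $(\gamma-1)\cdot H^1(\QQ_p, X\otimes\LL)$ — but this requires justifying that the relevant sequence is exact, i.e. that there is no obstruction from $H^0$. Concretely, I would apply the long exact cohomology sequence associated to $0\ra X\otimes\LL \xrightarrow{\gamma-1} X\otimes\LL \ra X\ra 0$ (the cokernel of $\gamma-1$ on $X\otimes\LL$ being $X$), giving
$$H^0(\QQ_p, X) \lra H^1(\QQ_p, X\otimes\LL)\xrightarrow{\gamma-1} H^1(\QQ_p, X\otimes\LL)\lra H^1(\QQ_p, X).$$
Exactness at the middle term shows that any $\xi$ with $\xi_0 = 0$ — i.e. mapping to $0$ in $H^1(\QQ_p, X)$ — is of the form $(\gamma-1)\cdot\eta$ for some $\eta\in H^1(\QQ_p, X\otimes\LL) = \varprojlim H^1(\Phi_n, X)$. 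Setting $\xi' = \log_p(\rho_{\textup{cyc}}(\gamma))\cdot\eta$ (note $\log_p(\rho_{\textup{cyc}}(\gamma))\in p\ZZ_p$ is a nonzero scalar, so this rescaling is harmless and gives exactly the normalization $\xi = \tfrac{\gamma-1}{\log_p(\rho_{\textup{cyc}}(\gamma))}\xi'$ demanded in the statement) yields the desired $\xi'$.

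For the uniqueness clause: if $\xi = \tfrac{\gamma-1}{\log_p(\rho_{\textup{cyc}}(\gamma))}\xi' = \tfrac{\gamma-1}{\log_p(\rho_{\textup{cyc}}(\gamma))}\xi''$ for two such elements, then $(\gamma-1)(\xi'-\xi'') = 0$, so $\xi'-\xi''$ is $(\gamma-1)$-torsion in $\varprojlim H^1(\Phi_n,X)$; under the stated hypothesis that this module has no $(\gamma-1)$-torsion, $\xi'=\xi''$. I would remark that this no-torsion hypothesis is harmless in the applications — e.g. for $X = F_p^-V$ it is automatic since the module in question is then a free module of finite rank over $\LL\otimes\QQ_p$ (a product of fields) or can be checked directly — but in the generality of the lemma we simply impose it.

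The main obstacle, and the only genuinely nontrivial input, is verifying that the cokernel of $\gamma-1$ acting on $X\otimes\LL$ is canonically $X$ and that the connecting/specialization map $H^1(\QQ_p, X\otimes\LL)\ra H^1(\QQ_p, X)$ really does compute $\xi\mapsto\xi_0$ — i.e. that the Shapiro isomorphism intertwines the algebraic coinvariants map with the cohomological corestriction-to-level-zero map. This is standard (it is the content of the compatibility of Shapiro's lemma with the norm maps in the tower, as in Perrin-Riou's or Nekov\'a\v{r}'s foundational treatments), but it is the step one must cite or spell out carefully; everything else is formal manipulation of the six-term exact sequence. A secondary but purely bookkeeping point is that $\LL$ is not Noetherian-friendly in the naive sense, so one should phrase the exactness at the level of the profinite/continuous cohomology groups, where the relevant Mittag-Leffler conditions hold because each $H^1(\Phi_n,X)$ is a finitely generated $R$-module.
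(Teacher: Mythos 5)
Your proposal is correct and follows essentially the same route as the paper's own proof: both rely on the exactness, at the middle term, of the long exact cohomology sequence coming from $0\ra X\otimes\LL\stackrel{\gamma-1}{\ra}X\otimes\LL\ra X\ra 0$, together with the identification $\varprojlim H^1(\Phi_n,X)\cong H^1(\QQ_p,X\otimes\LL)$. Your version merely spells out the $H^0$ term and the compatibility with Shapiro's lemma a bit more explicitly than the paper, which quotes the resulting four-term exact sequence directly.
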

\begin{proof}
This follows at once from the exactness of the sequence
$$0\lra H^1(\QQ_p,X\otimes\LL)[\gamma-1]\lra H^1(\QQ_p,X\otimes\LL) \stackrel{\gamma-1}{\lra}H^1(\QQ_p,X\otimes\LL) \lra H^1(\QQ_p,T)$$
and using the identification $\varprojlim H^1(\Phi_n,X)=H^1(\QQ_p,X\otimes\LL)$. Here $H^1(\QQ_p,X\otimes\LL)[\gamma-1]$ stands for the $(\gamma-1)$-torsion submodule of $H^1(\QQ_p,X\otimes\LL)$.
\end{proof}
 Note that $\xi_0^\prime$ does not depend on the choice of $\gamma$. 
 \begin{lemma}
\label{lem:calculateder}
Suppose $\xi_0=0$ and let $\xi^{\prime}=\{\xi_n^\prime\}$ is \emph{any} element whose existence was proved in Lemma~\ref{lem:derexists}. Then $\langle \xi^\prime_0,z_0\rangle_{\textup{Tate}}=\textup{Der}_{\rho_{\textup{cyc}}}(\al_\xi)(z_0)\,.$
\end{lemma}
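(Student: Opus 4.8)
The plan is to unwind both sides of the claimed identity $\langle \xi_0^\prime, z_0\rangle_{\textup{Tate}} = \textup{Der}_{\rho_{\textup{cyc}}}(\al_\xi)(z_0)$ into explicit limits over the layers $\Gamma_n$ and match them term by term. First I would fix the topological generator $\gamma$ and write, using Lemma~\ref{lem:derexists}, $\xi = \frac{\gamma-1}{\log_p(\rho_{\textup{cyc}}(\gamma))}\cdot\xi^\prime$ in $\varprojlim H^1(\Phi_n,X) = H^1(\QQ_p,X\otimes\LL)$. Projecting to the $n$-th layer, this reads $\xi_n = \frac{1}{\log_p(\rho_{\textup{cyc}}(\gamma))}(\gamma-1)\xi_n^\prime$ in $H^1(\Phi_n,X)$, where $\gamma$ acts through $\Gamma_n$. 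The strategy is then to substitute this expression for $\xi_n$ into the defining formula for $\textup{Der}_{\rho_{\textup{cyc}}}(\al_\xi)(z_0)$.

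The key computation is the following. By Definition~\ref{define:derofLxi}, $\textup{Der}_{\rho_{\textup{cyc}}}(\al_\xi)(z_0) = -\lim_n \sum_{\tau\in\Gamma_n}\log_p(\rho_{\textup{cyc}}(\tau))\langle \xi_n^\tau, z_n\rangle_{\textup{Tate}}$. Substituting $\xi_n = \frac{1}{\log_p(\rho_{\textup{cyc}}(\gamma))}(\gamma-1)\xi_n^\prime$ and using Galois-equivariance of the Tate pairing together with the fact that $z_n$ is the projection of a norm-compatible system (so $\sum_\tau$ can be reindexed), I would rewrite the sum as $-\frac{1}{\log_p(\rho_{\textup{cyc}}(\gamma))}\lim_n\sum_{\tau\in\Gamma_n}\log_p(\rho_{\textup{cyc}}(\tau))\big(\langle(\xi_n^\prime)^{\gamma\tau},z_n\rangle_{\textup{Tate}} - \langle(\xi_n^\prime)^{\tau},z_n\rangle_{\textup{Tate}}\big)$. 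Reindexing the first sum by $\tau\mapsto\gamma^{-1}\tau$ collapses it to $-\frac{1}{\log_p(\rho_{\textup{cyc}}(\gamma))}\lim_n\sum_{\tau\in\Gamma_n}\big(\log_p(\rho_{\textup{cyc}}(\gamma\tau)) - \log_p(\rho_{\textup{cyc}}(\tau))\big)\langle(\xi_n^\prime)^\tau,z_n\rangle_{\textup{Tate}}$; since $\rho_{\textup{cyc}}$ is a homomorphism and $\log_p$ turns the product into a sum, the bracketed coefficient is simply $\log_p(\rho_{\textup{cyc}}(\gamma))$, a constant. That constant cancels the prefactor, leaving $-\lim_n\sum_{\tau\in\Gamma_n}\langle(\xi_n^\prime)^\tau,z_n\rangle_{\textup{Tate}}$. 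By Galois-equivariance and norm-compatibility of $\mathbf z$, $\sum_{\tau}\langle(\xi_n^\prime)^\tau,z_n\rangle_{\textup{Tate}} = \sum_\tau\langle\xi_n^\prime,z_n^{\tau^{-1}}\rangle_{\textup{Tate}} = \langle\xi_n^\prime,\textup{cor}(z_n)\rangle$, which under the corestriction maps is eventually the image of $\langle\xi_0^\prime,z_0\rangle_{\textup{Tate}}$ up to sign; tracking the sign convention in Definition~\ref{define:derofLxi} yields exactly $\langle\xi_0^\prime,z_0\rangle_{\textup{Tate}}$.

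The main obstacle I anticipate is bookkeeping rather than conceptual: making the reindexing and the passage to the limit rigorous simultaneously. The quantity $\log_p(\rho_{\textup{cyc}}(\tau))$ is only well-defined modulo $p^n$ for $\tau\in\Gamma_n$, so the telescoping identity $\log_p(\rho_{\textup{cyc}}(\gamma\tau)) - \log_p(\rho_{\textup{cyc}}(\tau)) = \log_p(\rho_{\textup{cyc}}(\gamma))$ holds only modulo $p^n$, and one must check that the resulting error terms vanish in the limit — this is precisely the kind of estimate underlying \cite[Lemma 5.9]{kbbheights}, so I would invoke that. A second subtlety is that Lemma~\ref{lem:derexists} guarantees $\xi^\prime$ is unique only when $\varprojlim H^1(\Phi_n,X)$ has no $(\gamma-1)$-torsion, whereas Lemma~\ref{lem:calculateder} asserts the identity for \emph{any} choice of $\xi^\prime$; I would note that two choices differ by a $(\gamma-1)$-torsion element $\eta$ with $(\gamma-1)\eta = 0$, and the computation above shows the contribution of $\eta$ to the right-hand side is $-\lim_n\sum_\tau\langle\eta_n^\tau,z_n\rangle$, which must be reconciled with $\langle\eta_0,z_0\rangle$ — and indeed the telescoping argument applied to $\eta$ (whose "derivative" $\al$ vanishes since $(\gamma-1)\eta=0$ forces $\eta_0 = 0$ after projecting, or more carefully forces the relevant pairing sums to stabilize to $\langle\eta_0,z_0\rangle_{\textup{Tate}}$) shows consistency. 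Finally I would double-check the two sign conventions in Definition~\ref{define:derofLxi} against the reindexing $\tau\leftrightarrow\tau^{-1}$ to be sure no spurious minus sign survives.
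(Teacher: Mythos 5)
Your proposal is correct and is essentially the paper's own argument: substitute $\xi_n=\frac{\gamma-1}{\log_p(\rho_{\textup{cyc}}(\gamma))}\xi_n'$, reindex by a shift by $\gamma$, telescope the $\log_p\rho_{\textup{cyc}}$ coefficients to the constant $\log_p(\rho_{\textup{cyc}}(\gamma))$ (mod $p^n$), cancel the normalization, and identify the remaining $\lim_n\sum_\tau\langle(\xi_n')^\tau,z_n\rangle_{\textup{Tate}}$ with $\langle\xi_0',z_0\rangle_{\textup{Tate}}$ via the restriction/corestriction adjunction and norm-coherence. The only difference is cosmetic (the paper manipulates $\sum_\tau\log_p(\rho_{\textup{cyc}}(\tau^{-1}))\xi_n^\tau$ as an element before pairing, while you pair throughout); note that your reindexing $\tau\mapsto\gamma^{-1}\tau$ should produce the coefficient $\log_p(\rho_{\textup{cyc}}(\gamma^{-1}\tau))-\log_p(\rho_{\textup{cyc}}(\tau))=-\log_p(\rho_{\textup{cyc}}(\gamma))$ rather than $+\log_p(\rho_{\textup{cyc}}(\gamma))$, which removes the spurious minus sign you flagged, and the independence of the choice of $\xi'$ follows simply because the left-hand side $\textup{Der}_{\rho_{\textup{cyc}}}(\al_\xi)(z_0)$ depends only on $\xi$.
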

\begin{proof}
Observe that
\begin{align*}
\log_p(\rho_{\textup{cyc}}(\gamma))\sum_{\tau\in\Gamma_n} \log_p(\rho_{\textup{cyc}}(\tau^{-1}))\cdot\xi_n^{\tau}&=\sum_{\tau\in\Gamma_n} \log_p(\rho_{\textup{cyc}}(\tau^{-1}))\cdot(\xi_n^{\prime})^{\tau(\gamma-1)}\\
&=\sum_{\tau\in\Gamma_n}\left( \log_p(\rho_{\textup{cyc}}(\tau^{-1}))(\xi_n^{\prime})^{\tau\gamma}-\log_p(\rho_{\textup{cyc}}(\tau^{-1}))(\xi_n^{\prime})^{\tau}\right)\\
&=\sum_{\sigma\in\Gamma_n}\left( \log_p(\rho_{\textup{cyc}}(\sigma^{-1}))(\xi_n^{\prime})^{\sigma}+\log_p(\rho_{\textup{cyc}}(\gamma))(\xi_n^{\prime})^{\sigma}\right)\\
&\,\,\,\,\,\,\,\,\,\,\,\,\,\,\,\,\,\,\,\,\,\,\,\,\,\,\,\,\,\,\,\,\,\,\,\,\,\,\,\,\,\,\,\,-\sum_{\tau \in \Gamma_n}\log_p(\rho_{\textup{cyc}}(\tau^{-1}))(\xi_n^{\prime})^{\tau}\\
&=\log_p(\rho_{\textup{cyc}}(\gamma))\sum_{\sigma\in\Gamma_n}(\xi_n^{\prime})^{\sigma},
\end{align*}
where all the equalities take place in $R/p^nR$, and the third equality is obtained by setting $\sigma=\tau\gamma$. 
This shows that ${\displaystyle \sum_{\tau\in\Gamma_n} \log_p(\rho_{\textup{cyc}}(\tau^{-1}))\cdot\xi_n^{\tau}=\sum_{\sigma\in\Gamma_n}(\xi_n^{\prime})^{\sigma}}$ (in $R/p^{n-1}R$). By the commutativity of the diagram
$$\xymatrix{H^1(\Phi_n,X) & \times & H^1(\Phi_n,X^*)\ar[d]^{cor}\ar[r]^(.68){\langle,\rangle_{\textup{Tate}}}&R\\
H^1(\QQ_p,X)\ar[u]^{\textup{res}} & \times & H^1(\QQ_p,X^*)\ar[r]^(.68){\langle,\rangle_{\textup{Tate}}}&R
}$$
and the fact that both $\{\xi_n^\prime\}$ and $\{z_n\}$ are norm-coherent, 
we conclude that
$$\left\langle\sum_{\tau\in\Gamma_n} \log_p(\rho_{\textup{cyc}}(\tau^{-1}))\cdot\xi_n^{\tau}\,,\, z_n\right\rangle_{\textup{Tate}}=\langle\xi^\prime_0,z_0\rangle_{\textup{Tate}}$$
in $R/p^{n-1}R$. Proof of the Lemma follows by letting $n\ra \infty$.
\end{proof}
\begin{define}
\label{def:derivedmeasure}
Suppose $\xi_0=0$ and let $\xi^{\prime}=\{\xi_n^\prime\}$  be as above. Define the \emph{derivative of the measure} $\al_{\xi}$ by setting
$$\al_{\xi}^{\prime}:=\al_{\xi^{\prime}}=\left\{\sum_{\tau \in \Gamma_n}\langle\xi_n^{\prime},z_n ^\tau\rangle_{\textup{Tate}}\cdot\tau\right\}\in \LL.$$
Observe that the derived measure $\al^\prime_\xi$ depends both on the choice of $\gamma$ and the choice of $\xi^\prime$.
\end{define}
Let $J=\ker(\LL\ra\ZZ_p)$ denote the augmentation ideal. We have an isomorphism
$$R \otimes_{\ZZ_p} J/J^2 \stackrel{\sim}{\lra}R\otimes_{\ZZ_p}\Gamma\stackrel{\sim}{\lra} R$$
given by
$1\otimes (\gamma-1 \mod J^2) \mapsto \frac{1}{p}\log_p(\rho_{\textup{cyc}}(\gamma))$. Let $1\otimes (\gamma_0-1) \in J/J^2$ denote the image of $1\in R$ under the inverse of this composition.
\begin{lemma}
\label{lem:normalizedderivedmeasure}
$\displaystyle{\frac{(\gamma-1)}{\log_p(\rho_{\textup{cyc}}(\gamma))}}\al_{\xi}^{\prime}\equiv \al_{\xi} \mod J^2$.
\end{lemma}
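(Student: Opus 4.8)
The plan is to unwind the definitions of both sides modulo $J^2$ and match coefficients. Recall that by Definition~\ref{def:derivedmeasure} we have $\al_\xi^\prime=\al_{\xi^\prime}=\{\sum_{\tau\in\Gamma_n}\langle\xi_n^\prime,z_n^\tau\rangle_{\textup{Tate}}\cdot\tau\}$, where $\xi=\frac{(\gamma-1)}{\log_p(\rho_{\textup{cyc}}(\gamma))}\cdot\xi^\prime$ by Lemma~\ref{lem:derexists}. First I would observe that multiplication by $(\gamma-1)$ on $\LL$ carries $J^k$ into $J^{k+1}$, so that the element $\frac{(\gamma-1)}{\log_p(\rho_{\textup{cyc}}(\gamma))}\al_\xi^\prime$ is well-defined modulo $J^2$ once $\al_\xi^\prime$ is known modulo $J$, i.e. once its augmentation (the $n=0$ term, or equivalently the image under $\LL\to\ZZ_p$) is known. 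Concretely, if $a_\xi^\prime:=\sum_{\tau\in\Gamma_n}\langle\xi_n^\prime,z_n^\tau\rangle_{\textup{Tate}}\in R$ denotes that augmentation (independent of $n$ by norm-compatibility, exactly as in the proof of Lemma~\ref{lem:calculateder}), then $\al_\xi^\prime\equiv a_\xi^\prime \mod J$ and hence $\frac{(\gamma-1)}{\log_p(\rho_{\textup{cyc}}(\gamma))}\al_\xi^\prime\equiv \frac{a_\xi^\prime}{\log_p(\rho_{\textup{cyc}}(\gamma))}(\gamma-1)\mod J^2$.

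Next I would compute the other side. Since $\xi_0=0$, the $n=0$ term of $\al_\xi$ vanishes, so $\al_\xi\in J$ and it suffices to identify its image in $J/J^2$, which amounts to computing $\sum_{\tau\in\Gamma_n}\langle\xi_n,z_n^\tau\rangle_{\textup{Tate}}\cdot\tau$ modulo $J^2$, i.e. to first order in $(\gamma-1)$. Writing an arbitrary $\tau\in\Gamma_n$ additively via $\log_p\rho_{\textup{cyc}}$ and using that $\tau\equiv 1+c_\tau(\gamma-1)\bmod J^2$ with $c_\tau=\log_p(\rho_{\textup{cyc}}(\tau))/\log_p(\rho_{\textup{cyc}}(\gamma))$, the class of $\al_\xi$ in $J/J^2$ becomes $\frac{1}{\log_p(\rho_{\textup{cyc}}(\gamma))}\left(\sum_{\tau}\log_p(\rho_{\textup{cyc}}(\tau))\langle\xi_n,z_n^\tau\rangle_{\textup{Tate}}\right)(\gamma-1)$ (the constant term drops out because $\xi_0=0$ forces $\sum_\tau\langle\xi_n,z_n^\tau\rangle_{\textup{Tate}}$ to vanish mod $p^n$). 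By the Galois-equivariance of the Tate pairing one has $\langle\xi_n,z_n^\tau\rangle_{\textup{Tate}}=\langle\xi_n^{\tau^{-1}},z_n\rangle_{\textup{Tate}}$, so after reindexing $\tau\mapsto\tau^{-1}$ the parenthesized sum is precisely $-\sum_\tau\log_p(\rho_{\textup{cyc}}(\tau))\langle\xi_n^\tau,z_n\rangle_{\textup{Tate}}=\textup{Der}_{\rho_{\textup{cyc}}}(\al_\xi)(z_0)$ in the limit, which by Lemma~\ref{lem:calculateder} equals $\langle\xi_0^\prime,z_0\rangle_{\textup{Tate}}=a_\xi^\prime$. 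Therefore $\al_\xi\equiv \frac{a_\xi^\prime}{\log_p(\rho_{\textup{cyc}}(\gamma))}(\gamma-1)\mod J^2$, matching the expression obtained above for $\frac{(\gamma-1)}{\log_p(\rho_{\textup{cyc}}(\gamma))}\al_\xi^\prime$.

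The main obstacle is bookkeeping with the inverse limits: one must check that all the congruences are compatible as $n$ varies and survive passage to the limit in $\LL=\varprojlim R[\Gamma_n]$, and that "modulo $J^2$" is interpreted correctly at finite level (where $J_n=\ker(R[\Gamma_n]\to R)$ is nilpotent and $J_n^2$ behaves differently than $J^2$). The cleanest route is to reduce everything to the single identity $a_\xi^\prime=\textup{Der}_{\rho_{\textup{cyc}}}(\al_\xi)(z_0)$ already established in Lemma~\ref{lem:calculateder}, so that the present lemma is essentially a formal repackaging: both sides of the claimed congruence lie in $J/J^2\cong R$ and both are sent to $a_\xi^\prime/\log_p(\rho_{\textup{cyc}}(\gamma))$ under the identification fixed just before the statement. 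I expect no genuine difficulty beyond this; the proof should be short.
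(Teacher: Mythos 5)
Your argument is correct, and it is a minor but genuine variant of what the paper intends. The paper's proof (``identical to the proof of Lemma~\ref{lem:calculateder}'') is to re-run the telescoping computation directly: substitute $\xi_n=\frac{\gamma-1}{\log_p(\rho_{\textup{cyc}}(\gamma))}\xi_n'$ into $\al_\xi^{(n)}=\sum_{\tau}\langle\xi_n,z_n^\tau\rangle_{\textup{Tate}}\cdot\tau$, rewrite $\langle(\xi_n')^\gamma-\xi_n',z_n^\tau\rangle$ as $\langle\xi_n',z_n^{\tau\gamma^{-1}}\rangle-\langle\xi_n',z_n^\tau\rangle$ by Galois-equivariance, and re-index $\sigma=\tau\gamma^{-1}$ to collapse the sum to $\frac{\gamma-1}{\log_p(\rho_{\textup{cyc}}(\gamma))}\al_\xi'^{(n)}$ at each finite level. (That computation in fact yields the exact identity $\al_\xi=\frac{\gamma-1}{\log_p(\rho_{\textup{cyc}}(\gamma))}\al_\xi'$ in $\LL$, of which the stated congruence mod $J^2$ is a weakening; the lemma is phrased mod $J^2$ since that is all that is used downstream.) You instead pass to $J/J^2\cong R$, expand each $\tau\equiv 1+c_\tau(\gamma-1)\bmod J^2$, isolate the coefficient of $(\gamma-1)$ in $\al_\xi$, and recognize it --- after the re-index $\tau\mapsto\tau^{-1}$ --- as $\textup{Der}_{\rho_{\textup{cyc}}}(\al_\xi)(z_0)$, then invoke Lemma~\ref{lem:calculateder} as a black box to equate this with the augmentation $\langle\xi_0',z_0\rangle_{\textup{Tate}}$ of $\al_\xi'$. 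This is a clean formal repackaging: it makes the logical dependence on Lemma~\ref{lem:calculateder} explicit and keeps the $p$-adic bookkeeping confined to that single prior lemma, at the cost of proving only the mod-$J^2$ statement rather than the exact equality. One small imprecision: the constant term $\sum_{\tau\in\Gamma_n}\langle\xi_n,z_n^\tau\rangle_{\textup{Tate}}$ does not merely vanish ``mod $p^n$''; by norm-compatibility of $\{\xi_n\}$ and $\{z_n\}$ this sum equals $\langle\xi_0,z_0\rangle_{\textup{Tate}}$ on the nose, which is zero since $\xi_0=0$.
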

\begin{proof}
The proof of this is identical to the proof of Lemma~\ref{lem:calculateder}.
\end{proof}
Even when $H^0(\QQ_p,X)=0$ we may define the \emph{derivative} of $\xi$ as follows. Consider the sequence 
\be\label{eqn:bocksteinseq}0\lra X\otimes \Gamma \stackrel{j}{\lra} X\otimes \LL/J^2 {\lra} X\otimes J/J^2\lra 0\ee
where $j$ stands for the map induced from multiplication by $(\gamma-1)/\log_p(\rho_{\textup{cyc}}(\gamma))$. The exact sequence (\ref{eqn:bocksteinseq}) yields the first row of the following commutative diagram with exact rows:
$$\xymatrix@C=1.5pc{H^0(\QQ_p,X)\otimes J/J^2\ar[r]&H^1(\QQ_p,X)\otimes\Gamma \ar[r]^(.47){j}&H^1(\QQ_p,X\otimes\LL/J^2)\ar[r]& H^1(\QQ_p,X)\otimes J/J^2\\
&H^1(\QQ_p,X\otimes\LL)\ar[u]\ar[r]^{j}&H^1(\QQ_p,X\otimes \LL)\ar[u]^{\textup{pr}}\ar[r]&H^1(\QQ_p,X)\ar[u]
}$$
 When $\xi=\{\xi_n\}$ satisfies $\xi_0=0$, it follows from Lemma~\ref{lem:derexists}\footnote{See also the detailed discussion in the appendix regarding this matter. In fact, up to an element of $H^0(\QQ_p,X)$, the element $\mathcal{D}(\xi)$ here is the class denoted by $[Dx_{\textup{Iw}}]$ in Lemma~\ref{lem:bocksteinnormalizedderivative}.} that there exists an element $\mathcal{D}(\xi) \in  H^1(\QQ_p,X)\otimes\Gamma$ (which is in general determined only up to an element of $H^0(\QQ_p,X)\otimes J/J^2$) such that $j(\mathcal{D}(\xi))=\textup{pr}(\xi_\infty)$. In case $H^1(\QQ_p,X\otimes\LL)[\gamma-1]=0$ this element is uniquely determined and in fact relates to the element $\xi_0^\prime$ defined as in Lemma~\ref{lem:calculateder} via $\mathcal{D}(\xi)=\xi_0^\prime\otimes \gamma_0\,.$
Furthermore Lemma~\ref{lem:calculateder} shows for a universal norm $z_0$ that
\be\label{eqn:derivativescompared}\langle\mathcal{D}(\xi),z_0\rangle_{\textup{Tate}}=\textup{Der}_{\rho_{\textup{cyc}}}(\al_\xi)(z_0) \otimes \gamma_0 \in R\otimes\Gamma\,.\ee
This tells us that even though the element $\mathcal{D}(\xi) \in H^1(\QQ_p,X)\otimes\Gamma$ is not uniquely determined, its value on a universal norm $z_0\in H^1(\QQ_p,X^*)$ is.


\section{Height formulas}
\label{sec:heightformulas}
Fix a generator $\{\zeta_{p^n}\}$ of  $\ZZ_p(1)=\varprojlim_n \pmb{\mu}_{p^n}$. Let $E/\QQ$ be an elliptic curve that has split multiplicative reduction at $p$. Then $E$ is a Tate curve at $p$, i.e., it admits a uniformization
$$\mathbb{C}_p^\times/q_E^{\ZZ}\stackrel{\sim}{\lra} E(\mathbb{C}_p)$$
for some $q_E\in \QQ_p^\times$.
The following theorem that was formerly known as Manin's conjecture was proved in \cite{sainetienne}:

\begin{thm}[\emph{Saint-Etienne Theorem}]
\label{thm:saintetienne}
$\log_p(q_E)\neq0$.
\end{thm}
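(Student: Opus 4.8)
The plan is to reduce the statement to the $p$-adic case of the Mahler--Manin conjecture on the transcendence of values of the modular $j$-function, which is exactly the theorem proved in \cite{sainetienne}. First I would record the basic identity furnished by the Tate uniformization: the $j$-invariant of $E$ satisfies
$$j_E=j(q_E)=\frac{1}{q_E}+744+196884\,q_E+\cdots\in\QQ,$$
and, because $E$ has (split) multiplicative reduction at $p$, one has $v_p(q_E)=-v_p(j_E)>0$; in particular $q_E\in p\ZZ_p\subset\QQ_p^\times$ and $0<|q_E|_p<1$.

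Next I would argue by contradiction. Suppose $\log_p(q_E)=0$. Writing $q_E=p^{m}u$ with $m=v_p(q_E)\geq 1$ and $u\in\ZZ_p^\times$, and using the convention $\log_p(p)=0$ together with the decomposition $\ZZ_p^\times\cong\pmb{\mu}_{p-1}\times(1+p\ZZ_p)$, the hypothesis forces $\log_p(u)=0$; since $\log_p$ is injective on $1+p\ZZ_p$, this means $u=\zeta\in\pmb{\mu}_{p-1}$ is a root of unity. Hence $q_E=\zeta\,p^{m}$ is algebraic over $\QQ$ (it is a root of $(x/p^m)^{p-1}-1$), while still lying in $\QQ_p^\times$ with $0<|q_E|_p<1$.

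Finally I would invoke the main theorem of \cite{sainetienne}: for any $q\in\QQ_p^\times$ that is algebraic over $\QQ$ and satisfies $0<|q|_p<1$, the value $j(q)$ is transcendental over $\QQ$. Applying this with $q=q_E$ contradicts the rationality $j_E=j(q_E)\in\QQ$ recorded above. Therefore $\log_p(q_E)\neq 0$.

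The entire difficulty is concentrated in the last step, namely the $p$-adic transcendence statement itself, which is the substance of \cite{sainetienne}; its proof proceeds by the classical machinery of transcendence theory --- construction of an auxiliary function exploiting the algebraic differential equation satisfied by $j$, a $p$-adic Schwarz-type lemma, and zero estimates --- and I would simply cite it. The first two steps are routine manipulations with the Tate parametrization and the $p$-adic logarithm and present no obstacle.
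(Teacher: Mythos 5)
Your proposal is correct, and it takes essentially the same route as the paper, since the paper itself offers no proof of this statement but simply records it as the theorem of \cite{sainetienne}. Your reduction is the standard one: with the Iwasawa branch $\log_p(p)=0$, the vanishing of $\log_p(q_E)$ forces $q_E=\zeta p^{m}$ (a root of unity times a power of $p$) to be algebraic, which contradicts the $p$-adic Mahler--Manin theorem because $j(q_E)=j_E\in\QQ$; the injectivity of $\log_p$ on $1+p\ZZ_p$ used here is valid since the paper assumes $p>3$.
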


Let $L(E/\QQ,s)$ denote the Hasse-Weil $L$-function attached to $E$. It is known thanks to \cite{wiles,5authors} that $L(E/\QQ,s)$ is an entire function, let $r_{\textup{an}}(E):=\textup{ord}_{s=1} L(E/\QQ,s)$ be the order of vanishing at $s=1$.

Attached to $E$, there is an element $\al_E \in \LL$ (the \emph{Mazur-Tate-Teitelbaum $p$-adic $L$-function}) constructed in \cite{mtt} and characterized by the interpolation formula
$$\chi(\al_E)=\tau(\chi)\frac{L(E,{\chi}^{-1},1)}{\Omega_E^+}$$
for every non-trivial character $\chi$ of $\Gamma$ of finite order, where $\tau(\chi)=\sum_{\delta\in \Delta_n}\chi(\delta)\zeta_{p^{n+1}}^{\delta}$ is the Gauss sum and where $n$ is the smallest integer such that $\chi$ factors through $\Delta_n:=\Gamma/\Gamma^{p^n}$. Furthermore, the Mazur-Tate-Teitelbaum's $p$-adic $L$-function vanishes at the trivial character $\pmb{1}$, namely, $\pmb{1}(\al_E)=0$. Setting
$$L_p(E,s):= \rho_{\textup{cyc}}^{s-1}(\al_E)\,,$$
 we conclude in this case that $L_p(E,1)=0$. A theorem of Greenberg-Stevens \cite{greenbergstevens} expresses the derivative of the $p$-adic $L$-function $L_p(E,s)$ at $s=1$ in terms of the $L$-value:
\be\label{eqn:grste}\frac{d}{ds}L_p(E,s)\Big|_{s=1}=\frac{\log_p(q_E)}{\textup{ord}_p(q_E)}{L(E,1)}/{\Omega_E^+}.\ee
We therefore conclude when $r_{\textup{an}}(E)=0$ or $1$, the order of vanishing of $L_p(E,s)$ at $s=1$ is at least $1+r_{\textup{an}}(E)$. Our goal is to express $\frac{d}{ds}L_p(E,s)\big|_{s=1}$ (resp., $\frac{d^2}{ds^2}L_p(E,s)\big|_{s=1}$) when $r_{\textup{an}}(E)=0$ (resp., when $r_{\textup{an}}(E)=1$) in terms of Nekov\'a\v{r}'s height pairings evaluated  on elements obtained from the Beilinson-Kato elements and the Coleman map, whose basic properties we outline below.

\begin{rem}
\label{rem:powerseriesvsmeasures}
By a slight abuse, we will denote the measure on $\Gamma$ associated to an element $\al \in \LL$ also by $\al$. Then for any continuous character $\psi:\Gamma\ra\mathbb{C}_p$, we will have $\int_\Gamma \psi\cdot d\al=\psi(\al)$. For example, we will sometimes prefer to write $L_p(E,s)=\int_\Gamma\rho_{\textup{cyc}}^{s-1}\cdot d\al_E
$.

\end{rem}
\subsection{The (explicit) Coleman map for a Tate Curve}
\label{subsec:colemanmapfortate}
We review here the definition of the Coleman map following~\cite{rubin96durham} and \cite[Section 8]{kobayashi03}. Let $\frak{O}_n$ denote the ring of integers of $\Phi_n$ and let $\frak{m}_n$ denote the maximal ideal of $\frak{O}_n$ and $\pi_n\in\frak{m}_n$ a fixed uniformizer. Denote $1$-units of $\frak{O}_n$ by $U_{n}^1$. For a fixed generator $\{\zeta_{p^n}\}$ of $\ZZ_p(1)$, one constructs elements $c_n\in \widehat{\mathbb{G}}_m(\frak{m}_n)$ so that the elements $d_n:=1+c_n\in U^1_n$ are norm compatible as $n$ varies and $d_n$ generates $(U_n^1)^{\textbf{N}=1}$ where $\textbf{N}$ stands for the absolute norm from $\Phi_n$ to $\QQ_p$. Let
$$d_\infty=\{d_n\}\in\varprojlim \Phi_n^{\times}\,\widehat{\otimes}\, \ZZ_p \cong\varprojlim H^1(\Phi_n,\ZZ_p(1)) \cong H^1(\QQ_p,\ZZ_p(1)\otimes\LL),$$
where the first isomorphism hollows from Kummer theory and second from~\cite[Proposition II.1.1]{Colmez98Annals}. As $\textbf{N}(d_n)=1$ by construction, it follows that $d_\infty$ is in the kernel of the augmentation map:
$$d_\infty \in \ker(H^1(\QQ_p,\ZZ_p(1)\otimes\LL) \lra H^1(\QQ_p,\ZZ_p(1)))=(\gamma-1)H^1(\QQ_p,\ZZ_p(1)\otimes\LL).$$
Let
\be\label{eqn:definederivedcoleman}\frak{C}_\infty=\{\frak{C}_n\} \in H^1(\QQ_p,\ZZ_p(1)\otimes\LL)=\varprojlim  \Phi_n^{\times}\,\widehat{\otimes}\, \ZZ_p\ee 
be the element chosen such that
$$d_\infty=\frac{(\gamma-1)}{\log_p(\rho_{\textup{cyc}}(\gamma))}\cdot \frak{C}_\infty\,\,.$$ 
It is straightforward to verify that the element $\frak{C}_0$ does not depend on the choice of $\gamma$.
As we have assumed the elliptic curve $E$ has split multiplicative reduction mod $p$, it follows that $E$ is locally a Tate curve, namely that $E_{_{/\QQ_p}}=E_q$ where
$$E_q: y^2+xy=x^3+a_4(q)x+a_6(q)$$
with $q=q_E\in \QQ_p^\times$ satisfying $\textup{ord}_p(q)>0$ and
$$a_4(q)=-\sum_{n\geq1}\frac{n^3q^n}{1-q^n}\,\,\,\,,\,\,\,\, a_6(q)=-\frac{5}{12}{\sum_{n\geq1}\frac{n^3q^n}{1-q^n}+\frac{7}{12}\sum_{n\geq1}\frac{n^5q^n}{1-q^n}}\,\,.$$
Then $E_q$ admits a Tate uniformization
$$\phi: \mathbb{C}_p^\times/q^\ZZ \stackrel{\sim}{\lra} E_q(\mathbb{C}_p).$$
This isomorphism induces an isomorphism of formal groups
$$\widehat{\phi}: \widehat{\mathbb{G}}_m \stackrel{\sim}{\lra}\widehat{E}.$$
Via this isomorphism, we regard the element $c_n \in  \widehat{\mathbb{G}}_m(\frak{m}_n)$ as an element of $\widehat{E}(\frak{m}_n)$, and by the Kummer map also an element of $H^1(\Phi_n,T)$. Using the local duality pairing
$$\langle\,,\, \rangle_{\textup{Tate},E}:  H^1(\Phi_n,T)\times H^1(\Phi_n,T^*)\lra \ZZ_p,$$
we obtain $\ZZ_p[\Gamma_n]$-linear maps
\begin{align*}
\textup{Col}_n: H^1(\Phi_n,T^*)&\ra \ZZ_p[\Gamma_n]\\
\notag z &\mapsto \sum_{\tau \in \Gamma_n}\langle c_n^\tau,z\rangle_{\textup{Tate},E}\,\,\cdot\tau
\end{align*}
which are compatible as $n$ varies with respect to corestriction maps and natural projections. Hence these maps yield in the limit a $\LL$-equivariant map
$$\textup{Col}:\varprojlim H^1(\Phi_n,T^*)\cong H^1(\QQ_p,T^*\otimes\LL)\lra \LL.$$
As explained in \cite[\S4]{kobayashi06},
\be
\label{eqn:colemanexplicit}
\textup{Col}_n(z)=\sum_{\tau \in \Gamma} \langle d_n^\tau,\textup{res}_p^s(z_n) \rangle_{\textup{Tate},\mathbb{G}_m}\cdot\tau
\ee
where $\textup{res}_p^s: H^1(\Phi_n,T^*)\ra H^1(\Phi_n,F_p^-T^*)$ is the projection on to the singular quotient and 
$$\langle\, ,\, \rangle_{\textup{Tate},\mathbb{G}_m}: H^1(\Phi_n,F_p^+T)\times H^1(\Phi_n,F_p^-T^*)\lra \ZZ_p$$
is the local Tate pairing for $\mathbb{G}_m$ that compares to $\langle\, ,\, \rangle_{\textup{Tate},E}$ via the commutative diagram
$$\xymatrix@C=.6pc{H^1(\Phi_n,T)&\times& H^1(\Phi_n,T^*)\ar[d]^{i_v^-}\ar[rrr]^(.62){\langle\, ,\, \rangle_{\textup{Tate},E}}&&& \ZZ_p\\
H^1(\Phi_n,F_p^+T)\ar[u]^{i_p^+}&\times& H^1(\Phi_n,F_p^-T^*)\ar[rrr]^(.64){\langle\, ,\, \rangle_{\textup{Tate},\mathbb{G}_m}}&&& \ZZ_p\ar@{=}[u]
}$$
\begin{rem}
\label{rem:normalizationoftheweilpairing}
More precisely, the diagram above looks as follows:
$$\xymatrix@C=.35pc{H^1(\Phi_n,T)&\otimes& H^1(\Phi_n,T^*)\ar[d]^{i_p^-}\ar[rrr]^(.45){\cup}&&& H^2(\Phi_n,T\otimes T^*)\ar[rrr]^(.52){\textup{Weil}}&&&H^2(\Phi_n,\ZZ_p(1))\ar[rrr]^(.71){\textup{inv}_p}&&&\ZZ_p\\
H^1(\Phi_n,\ZZ_p(1))\ar[u]^{i_p^+}&\otimes& H^1(\Phi_n,\ZZ_p)\ar[rrr]^(.4){\cup}&&& H^2(\Phi_n,\ZZ_p(1)\otimes\ZZ_p)\ar[rrr]^(.55){\times}&&&H^2(\Phi_n,\ZZ_p(1))\ar[rrr]_(.72){\textup{inv}_p}\ar@{=}[u]&&&\ZZ_p
}$$
Recall that $i_p^{+}$ is induced from the Tate uniformization and $\times$ is the usual multiplication. Note that both Tate uniformization and Weil pairing is defined up to sign and we (implicitly) make a compatible choice so as to make the diagram above commute. 
\end{rem}
\begin{define} We define the following map (also denoted by $\textup{Col}$) 
$$\textup{Col}:\varprojlim H^1(\Phi_n,F_p^-T^*)\cong H^1(\QQ_p,F_p^-T^*\otimes\LL)\lra \LL$$
obtained from the compatible family of maps $\{\textup{Col}_n\}$ from (\ref{eqn:colemanexplicit}).
\end{define}
\subsection{Beilinson-Kato elements}
\label{subsec:katobeilinson}
Given an elliptic curve $E$, Kato has constructed an element
$$\frak{z}_\infty^{\textup{BK}}=\{\frak{z}^{\textup{BK}}_n\}\in \varprojlim H^1({\QQ_n},T^*)\otimes\QQ_p=H^1(\QQ,T^*\otimes\LL)\otimes\QQ_p$$
which has the property that
\be\label{eqn:katofundamental}\textup{Col}(\textup{res}_p(\frak{z}_\infty^{\textup{BK}}))=\al_E,\ee
where $\textup{res}_p: H^1(\QQ_n,-)\ra H^1(\Phi_n,-)$ is the restriction to $G_p$.
 To ease notation we write $z^{\textup{BK}}_n=\textup{res}_p(\frak{z}_n^{\textup{BK}})$ and write $z^{\textup{BK}}$ in place of $z_0^{\textup{BK}}\in H^1(\QQ_p,T^*)\otimes\QQ_p$. For each $n\geq 0$, let
 $$\textup{res}_p^s: H^1(\Phi_n,T^*)\otimes\QQ_p\lra H^1(\Phi_n,F^-_pT^*)\otimes\QQ_p$$
 denote the map induced from natural projection.
 \begin{rem}
 \label{rem:residuallyirred} It may be proved that Beilinson-Kato elements are locally integral, namely that
 ${z}_n^{\textup{BK}}\in H^1(\Phi_n,T^*).$ In case $E(\QQ)[p]=0$, the Beilinson-Kato elements are globally integral as well: $\frak{z}_\infty^{\textup{BK}}=\{\frak{z}^{\textup{BK}}_n\}\in H^1(\QQ,T^*\otimes\LL).$
 \end{rem}
 In \cite[\S3.3.2]{pr93grenoble} Perrin-Riou proposes the following:
 \begin{conj}
 \label{conj:PR}
 The element $\frak{z}_0^{\textup{BK}}\in H^1(\QQ,T^*)\otimes\QQ_p$ is non-trivial iff $\textup{ord}_{s=1}\,L(E,s)\leq 1$.
 \end{conj}
 In this article, we need the ``if" part of this conjecture and this has been established by Venerucci as part of his thesis work:
 \begin{thm}[Venerucci]
 \label{thm:rodolfopr}
If $\textup{ord}_{s=1}\,L(E,s)\leq 1$ then the element $\frak{z}_0^{\textup{BK}}\in H^1(\QQ,T^*)\otimes\QQ_p$ is non-trivial.
\end{thm}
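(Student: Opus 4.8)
The plan is to treat the cases $r_{\textup{an}}(E)=0$ and $r_{\textup{an}}(E)=1$ separately, only the latter having genuine content. When $r_{\textup{an}}(E)=0$ one has $L(E,1)\neq 0$, and I would simply invoke Kato's explicit reciprocity law \cite{ka1}, which identifies $\exp^*_{\omega_E}(z^{\textup{BK}})$ with a nonzero rational multiple of $L(E,1)/\Omega_E^+$; hence $z^{\textup{BK}}=\textup{res}_p(\frak{z}_0^{\textup{BK}})\neq 0$ and a fortiori $\frak{z}_0^{\textup{BK}}\neq 0$. As a sanity check one can rephrase this Iwasawa-theoretically: if $\frak{z}_0^{\textup{BK}}=0$ then $\frak{z}_\infty^{\textup{BK}}$ is divisible by $\gamma-1$ in the torsion-free rank-one module $H^1(\QQ,T^*\otimes\LL)\otimes\QQ_p$, so by the reciprocity law \eqref{eqn:katofundamental} $\al_E\in J^2\otimes\QQ_p$, i.e.\ $\frac{d}{ds}L_p(E,s)|_{s=1}=0$, contradicting the Greenberg--Stevens formula \eqref{eqn:grste} together with Theorem~\ref{thm:saintetienne}.

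Assume now $r_{\textup{an}}(E)=1$. The first step is a reduction to a one-dimensional space. Since $L(E,1)=0$, the reciprocity law forces $\exp^*_{\omega_E}(z^{\textup{BK}})=0$, so $z^{\textup{BK}}\in H^1_f(\QQ_p,V^*)$; combined with the vanishing of the Bloch--Kato conditions at the bad primes $\ell\neq p$ (cf.\ Remark~\ref{rem:cohomatellvanishes}) this places $\frak{z}_0^{\textup{BK}}$ in $H^1_f(\QQ,V^*)=\textup{Sel}_p(E/\QQ)\otimes\QQ_p$. By the theorem of Gross--Zagier and Kolyvagin (applied over a suitable auxiliary imaginary quadratic field of the appropriate root number), $r_{\textup{an}}(E)=1$ forces $\textup{rank}_\ZZ E(\QQ)=1$ and $\Sha(E/\QQ)$ finite, so $\textup{Sel}_p(E/\QQ)\otimes\QQ_p$ is one-dimensional, spanned by the Kummer image of a non-torsion point $P$ built from a Heegner point. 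Writing $\frak{z}_0^{\textup{BK}}=c\cdot P$ with $c\in\QQ_p$, the claim becomes $c\neq 0$.

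To prove $c\neq 0$ I would follow Venerucci's thesis \cite{veneruccithesis} and compare, inside the Hida family passing through $E$, the ``big'' Beilinson--Kato class $\mathbf{z}$ --- whose image under the two-variable Coleman map is the two-variable $p$-adic $L$-function specializing to $L_p(E,s)$ in weight $2$ --- with the big Heegner class $\mathbf{y}$ of Howard, which specializes in weight $2$ to $P$. The crux, and the step I expect to be the main obstacle, is to show that $\mathbf{z}$ and $\mathbf{y}$ agree up to a unit: this should come out of a reciprocity law in families together with the exceptional-zero mechanism in the family, i.e.\ the Greenberg--Stevens relation between the weight-direction and cyclotomic-direction derivatives, controlled by the nonzero $\mathcal{L}$-invariant $\log_p(q_E)/\textup{ord}_p(q_E)$ furnished by Theorem~\ref{thm:saintetienne}. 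Granting the comparison, $\mathbf{y}$ specializes to the non-torsion $P$, so $\mathbf{z}$ cannot specialize to $0$, which gives $c\neq 0$ and hence $\frak{z}_0^{\textup{BK}}\neq 0$. The genuinely delicate ingredients are the expected ones: the control theory for big Galois representations and for big Heegner points, the reciprocity laws over weight space, and the careful bookkeeping of the trivial zero in the family --- it is precisely here that the full strength of Gross--Zagier, Kolyvagin, and the Saint-Etienne theorem is used.
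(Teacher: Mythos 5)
The paper does not prove this theorem; it is stated with the attribution ``(Venerucci)'' and cited from his thesis \cite{veneruccithesis}, so there is no internal proof to compare against. Your treatment of the $r_{\textup{an}}(E)=0$ case is correct and elementary: Kato's explicit reciprocity law identifies $\exp^*_{\omega_E}(z^{\textup{BK}})$ with a nonzero multiple of $L(E,1)/\Omega_E^+$ (the local interpolation factor $E_p(1)=1-p^{-1}$ being nonzero even at a split multiplicative prime), so $L(E,1)\neq0$ forces $z^{\textup{BK}}\neq0$ and hence $\frak{z}_0^{\textup{BK}}\neq0$.

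For $r_{\textup{an}}(E)=1$, however, your proposal does not constitute a proof. The reduction is fine: Kato reciprocity gives $\exp^*_{\omega_E}(z^{\textup{BK}})=0$, Remark~\ref{rem:cohomatellvanishes} handles the places away from $p$, and Gross--Zagier together with Kolyvagin makes $\textup{Sel}_p(E/\QQ)\otimes\QQ_p$ one-dimensional, spanned by (the Kummer image of) a Heegner point $P$, so the claim becomes $c\neq0$ in $\frak{z}_0^{\textup{BK}}=c\cdot P$. But the step you then label ``granting the comparison'' --- that the big Beilinson--Kato class $\mathbf{z}$ and Howard's big Heegner class $\mathbf{y}$ agree up to a unit in the Hida family --- is not a lemma one can grant: in this rank-one, exceptional-zero setting that comparison \emph{is} Perrin-Riou's conjecture, and proving it (the two-variable reciprocity law, the control theorem for big Heegner points, and the delicate bookkeeping of the trivial zero in the weight direction via the $\mathcal{L}$-invariant $\log_p(q_E)/\textup{ord}_p(q_E)$ and Greenberg--Stevens) is exactly Venerucci's contribution. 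By deferring precisely that step you have restated the theorem rather than proved it; the ``delicate ingredients'' you flag at the end are the correct toolkit, but the argument does not close without actually carrying out that comparison.
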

\subsubsection{Lifting the Selmer group to the extended Selmer group}
\label{subsubsec:liftBKtotilde}
For $X=V,V^*$, recall that we have an exact sequence 
$$0\lra H^0(\QQ_p,F_p^-X) \lra \widetilde{H}^1_f(X)\lra \textup{Sel}_p(E/\QQ)\otimes\QQ_p\lra 0$$
by Proposition~\ref{prop:compare} and Remark~\ref{rem:comparestrictselmertoselmer}. Following \cite[11.4.2]{nek}, this sequence admits a natural splitting
$$\xymatrix@C=.3pc@R=.5pc{\frak{s}:& \textup{Sel}_p(E/\QQ)\otimes\QQ_p \ar[rr]&& \widetilde{H}^1_f(X)\\
&[x]\ar@{|->}[rr]&&[(x,(x_\ell^+),(\mu_\ell))]
}$$
which is, according to loc.cit., characterized by the requirement that 
$$[x_p^+] \in \ZZ_p^\times\widehat{\otimes}\QQ_p\subset \QQ_p^\times\widehat{\otimes}\QQ_p=H^1(\QQ_p,F_p^+X)\,.$$ 
Let us explain this in detail. Suppose that
$$[(x,(x_\ell^+),(\mu_\ell))]\,,\,[(x,(\widetilde{x}_\ell^+),(\widetilde{\mu}_\ell))] \in \widetilde{H}^1_f(X)$$
and $[x_p^+], [\widetilde{x}_p^+] \in \ZZ_p^\times\widehat{\otimes}\QQ_p$\,. For each $\ell \in S$ set $z_\ell^+=x_\ell^+-\widetilde{x}_\ell^+$ and $\lambda_p=\mu_\ell -\widetilde{\mu}_\ell$. We contend to prove that the cocyle
$$(x,(x_\ell^+),(\mu_\ell))-(x,(\widetilde{x}_\ell^+),(\widetilde{\mu}_\ell))=(0,(z_\ell^+),(\lambda_\ell))\in \widetilde{Z}^1_f(X)$$
is in fact a coboundary. 

First of all for $\ell \in S$, $\ell\neq p$ the cocycle $z_\ell^+ \in Z^1(U_\ell^+(X))$ is a coboundary since the complex $U_\ell^+(X)$ acyclic.  This in turn means that $z_\ell^+=d\lambda_\ell^+$ for some $\lambda_{\ell}^+ \in U_\ell^{+}(X)^0$\,. Hence 
$$-d\,i_{\ell}^+(\lambda_\ell^+)=-i_\ell(z_\ell^+)=d\lambda_\ell$$
and $-i_{\ell}^+(\lambda_\ell^+)=\lambda_\ell$ since $H^0(\QQ_\ell,X)=0$.  

Since $(0,(z_\ell^+),(\lambda_\ell))$ is a cocycle we have $i_p^+(z_p^+)=-d\lambda_p$\,, which in particular means that $i_p^+([z_p^+])=0$. 
Considering the commutative diagram 
$$\xymatrix{[z_p^+]\in H^1(\QQ_p,F_p^+X)\ar[r]^(.60){i_p^+}& H^1(\QQ_p,X)\\
\,\,\,\,\,\,\,{\QQ_p^{\times}}\widehat{\otimes}\QQ_p\,\,\,\,\,\ar[r]_(.45){\textup{red}}\ar[u]^\kappa&{\QQ_p^\times}\widehat{\otimes}\QQ_p\Big{/}q_E^{\ZZ}\widehat{\otimes}\QQ_p\ar@{^{(}->}[u]_{\Psi}
}$$
where $\kappa$ is the Kummer isomorphism and $\Psi$ is the Tate uniformization followed by the Kummer map on $E(\QQ_p)$. Our conclusion that $i_p^+([z_p^+])=0$ translates via this diagram to the requirement that 
$$\ZZ_p^{\times}\widehat{\otimes}\QQ_p \ni [z_p^+] \in q_E^{\ZZ}\widehat{\otimes}\QQ_p\,.$$
Since $q_E \in p\ZZ_p$ it follows that $z_p^+=d\lambda_p^+$ for a unique (since $H^0(\QQ_p,F_p^+X)=0$) cochain $\lambda_p^+$. Furthermore,
$$-d\, i_p^+(\lambda_p^+)=-i_p^+(z_p^+)=d\lambda_p$$
and $- i_p^+(\lambda_p^+)=\lambda_p$ since $H^0(\QQ_p,X)=0$.
Now observing that 
$$(0,(z_\ell^+),(\lambda_\ell))=d(0,(\lambda_\ell^+),0)$$
is a coboundary and we conclude that 
$$[(x,(x_\ell^+),(\mu_\ell))]=[(x,(\widetilde{x}_\ell^+),(\widetilde{\mu}_\ell))]$$
as we desired to prove. We may in fact describe this lift even more explicitly. It follows using Saint-Etienne theorem that $\frak{B}=\{p,q_E\}$ is an ordered basis of $\widehat{\QQ}^\times_p{\otimes}\QQ_p$. Write $q_E=p^{\textup{ord}_p(q_E)}\,u_E$\,.

\begin{define}\label{def:alphax} For an element $[x] \in \textup{Sel}_p(E/\QQ)\otimes\QQ_p$, let $\alpha(x)$ denote the first coordinate of $\textup{res}_p([x])$ with respect to this basis. Let $x_p^+$ be any cocycle representing the class
$$[x_p^+]:=u_E\otimes\frac{-\alpha(x)}{\textup{ord}_p(q_E)} \in \widehat{\QQ}^\times_p{\otimes}\QQ_p\,.$$
\end{define}
Note that the class of 
\begin{align*}H^1(\QQ_p,X)\ni[\textup{res}_p(x)-i_p^+(x_p^+)]&=\textup{res}_p([x])-i_p^+([x_p^+])\\
&=\left(p\otimes \alpha(x) +u_E\otimes\frac{\alpha(x)}{\textup{ord}_p(q_E)}\right)\cdot q_E^{\ZZ}\widehat{\otimes}\QQ_p\\
\tag{$\star$}&=\left(u_E\otimes\frac{-\alpha(x)}{\textup{ord}_p(q_E)} +u_E\otimes\frac{\alpha(x)}{\textup{ord}_p(q_E)}\right)\cdot q_E^{\ZZ}\widehat{\otimes}\QQ_p\\
&=0
\end{align*}
vanishes (where the equality ($\star$) holds true since $p \equiv u_E\otimes \frac{-1}{\textup{ord}_p(q_E)}$ mod $q_E^{\ZZ}\widehat{\otimes}\QQ_p$) and therefore 
$$\textup{res}_p(x)-i_p^+(x_p^+)=d\mu_p$$ 
for a unique (as $H^0(\QQ_p,X)=0$) cochain $\mu_p \in C^0(\QQ_p,X)$.
 \subsection{Height formulas in the case $r_{\textup{an}}(E)=0$}
\label{subsec:compuationr0}
\begin{prop}[Kato]
If $L(E,1)\neq 0$ then $\textup{Sel}_p(E/\QQ)$ is finite and $H^1_{\FF}(\QQ,V)=0$.
\end{prop}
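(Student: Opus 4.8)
The plan is to recall that this is Kato's theorem (see \cite{ka1}) and to spell out how the two assertions are linked. The second assertion is a purely formal consequence of the first: by Remark~\ref{rem:compareXT} one has $H^1_{\FF}(\QQ,V)=\textup{Sel}_p(E/\QQ)\otimes\QQ_p$, so once $\textup{Sel}_p(E/\QQ)$ is known to be finite --- which, since it is a finitely generated $\ZZ_p$-module, is the same as saying it is torsion --- the right-hand side vanishes. Thus the entire content lies in proving that $\textup{Sel}_p(E/\QQ)\otimes\QQ_p=0$, equivalently $H^1_{\FF}(\QQ,V)=0$.

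I would establish this in two steps. First I would invoke Kato's explicit reciprocity law \cite{ka1}, which identifies $\exp_{\omega_E}^*(z^{\textup{BK}})$ with $L(E,1)/\Omega_E^+$ up to a nonzero elementary factor --- a period constant times a power of $p$ times the Euler factor $E_p(1)=1-p^{-1}$, which is nonzero precisely because $E$ has split multiplicative reduction at $p$. Hence the hypothesis $L(E,1)\neq0$ forces $\exp_{\omega_E}^*(z^{\textup{BK}})\neq0$. Since $\exp_{\omega_E}^*$ annihilates the finite local condition $H^1_f(\QQ_p,V^*)$, this shows not only that $\frak{z}_0^{\textup{BK}}\in H^1(\QQ,V^*)$ is nonzero but, crucially, that its localization $z^{\textup{BK}}=\textup{res}_p(\frak{z}_0^{\textup{BK}})$ has nonzero image in the singular quotient $H^1(\QQ_p,V^*)/H^1_f(\QQ_p,V^*)$.

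The second step is to feed this nonvanishing into the Euler system bound attached to Kato's zeta elements. The classes $\{\frak{z}^{\textup{BK}}_n\}$ of \S\ref{subsec:katobeilinson}, together with their companions over the cyclotomic fields $\QQ(\mu_m)$, form an Euler system for $V^*\cong V$, and Kolyvagin's descent (as axiomatized by Rubin, or in the Kolyvagin-system formalism of Mazur--Rubin) bounds the strict Selmer group of $V$ over $\QQ$ by the image of the bottom class in the singular quotient at $p$; since this image is nonzero by the first step, one obtains $\textup{Sel}_p(E/\QQ)\otimes\QQ_p=0$, hence the finiteness of $\textup{Sel}_p(E/\QQ)$, and then $H^1_{\FF}(\QQ,V)=0$ as explained above. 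The main obstacle is genuinely this second step: it rests on the full Euler system machinery for Kato's classes, which is the deep ingredient (and which we simply cite). A secondary point requiring mild care is to match the finite/Bloch--Kato local condition at $p$ that enters the Euler system argument with the strict Greenberg condition defining $H^1_{\FF}$; the two differ only through the exceptional-zero phenomena already recorded in Remark~\ref{rem:compareXT}, and in particular agree up to $\QQ_p$-rank, so this discrepancy does not affect the conclusion.
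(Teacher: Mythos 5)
The paper itself gives no proof here: it states the result as a theorem of Kato and immediately uses the consequence $H^1_{\FF}(\QQ,V)=0$, so there is no argument in the text to compare yours against line by line. That said, your sketch is a faithful reconstruction of Kato's actual argument --- explicit reciprocity law to deduce from $L(E,1)\neq 0$ that the Beilinson--Kato class has nonzero image in the singular quotient $H^1(\QQ_p,V^*)/H^1_f(\QQ_p,V^*)$, then the Euler system machinery (Rubin/Kolyvagin descent applied to Kato's zeta elements) to conclude $\textup{Sel}_p(E/\QQ)\otimes\QQ_p=0$ --- and the reduction of the second assertion to the first via Remark~\ref{rem:compareXT} is exactly right. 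One small inaccuracy worth flagging: the Euler factor $E_p(1)=1-p^{-1}$ is nonzero for elementary reasons, not ``precisely because $E$ has split multiplicative reduction at $p$''; the split multiplicative hypothesis only pins down which factor appears in the reciprocity formula (since $a_p=1$ and the conductor is divisible by $p$, the local factor is $1-p^{-1}$), it does not supply the nonvanishing. This does not affect the logic, but the phrasing suggests a dependence on the reduction type that is not there.
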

In this case the exact sequence of Proposition~\ref{prop:compare} induces isomorphism
\be\label{eqn:isomnek1}
H^0(G_p,F_p^-X)\stackrel{\sim}{\lra} \widetilde{H}^1_f(X)
\ee
for  for $X=V,V^*$. Let  $\alpha \in H^0(G_p,F_p^-V)$ and $\alpha^* \in H^0(G_p,F_p^-V^*)$. Denote their respective images under the isomorphism (\ref{eqn:isomnek1})) by $[\alpha]$ and $[\alpha^*]$. These elements are given explicitly as follows (we explain only for $X=V$ and the class $[\alpha]$): The exact sequence (\ref{eqn:tateexactseq}) yields an injection
\be\label{eqn:defdeltaalpha} \partial_p: H^0(G_p,F_p^-V) \hookrightarrow H^1(G_p, F_p^+V)\ee
which is obtained via the snake lemma applied on the diagram
$$\xymatrix@R=1pc{0\ar[r]&Z^1(G_p,F^+_pV) \ar[r]^(.55){i_p^+}&Z^1(G_p,V) \ar[r]^(.47){\iota_p^{-}}&Z^1(G_p,F^-_pV)&\\ 
0\ar[r]&C^0(G_p,F_p^+V)\ar[r]_(.55){i_p+}\ar[u]^{d}&C^0(G_p,V)\ar[r]_(.45){\iota_p^-}\ar[u]^d&C^0(G_p,F_p^-V)\ar[r]\ar[u]^d&0}$$
Namely, since $\iota_p^-$ on the lower row is surjective, there is an $\widetilde{\alpha} \in C^0(G_p,X)$ such that $\iota_p^-(\widetilde{\alpha})=\alpha$. As $d\alpha=0$, it follows that $d\widetilde{\alpha}=i_p^+(\beta)$ for some (unique) $\beta\in Z^1(G_p,F^+_pX)$. Then $\partial_p(\alpha):=[\beta]$ and we set 
$$[\alpha]:=[(0,\beta,\widetilde{\alpha})]\,.$$
It is easy to see that this class is independent of the choice of $\widetilde{\alpha}$

Let $z:G_\QQ \twoheadrightarrow \Gamma$ be the tautological homomorphism. Letting $G_\QQ$ act trivially on $\Gamma$, one may view $z$ as an element of $H^1(\QQ,\Gamma)=\textup{Hom}(G_\QQ,\Gamma)$. Its restriction $z_p \in H^1(G_p,\Gamma)$ also corresponds to the tautological homomorphism $G_{p}\twoheadrightarrow\Gamma$, where we now view $\Gamma$ as the decomposition group of $p$ inside $\textup{Gal}(\QQ_\infty/\QQ)$.
\begin{prop}
\label{prop:computeheightgenerally}
Let $z_p \cup \alpha_p^* \in H^1(G_p,\QQ_p\otimes\Gamma)= H^1(G_p,\QQ_p)\otimes\Gamma$ be the cup-product of $z_p$ and $\alpha_p^*$\,. Then we have the following equality in $\QQ_p\otimes \Gamma$:
$$\langle[\alpha], [\alpha^*] \rangle_{\textup{Nek}}=\langle \partial_p(\alpha),-z_p\cup \alpha_p^* \rangle_{\textup{Tate}}\,.$$
\end{prop}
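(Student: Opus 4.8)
The plan is to unwind Nekov\'a\v{r}'s two-step construction of $\langle\,,\,\rangle_{\textup{Nek}}$ --- first the Bockstein, then Poitou--Tate global duality --- and reduce everything to a cochain computation which, because the global cohomology classes underlying both $[\alpha]$ and $[\alpha^*]$ are zero, collapses onto the local data at $p$. Thus $\langle[\alpha],[\alpha^*]\rangle_{\textup{Nek}}=\langle\beta^1([\alpha]),[\alpha^*]\rangle_{\textup{PT}}$, where $\beta^1\colon\widetilde{H}^1_f(V)\lra\widetilde{H}^2_f(V)\otimes\Gamma$ is induced by the Bockstein morphism $\beta$ and $\langle\,,\,\rangle_{\textup{PT}}$ is the Poitou--Tate pairing. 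Recall that $\beta$ is the connecting morphism obtained, by applying $\widetilde{\mathbf{R}\Gamma}_f$, to the coefficient sequence $0\lra V\otimes\Gamma\lra V\otimes\LL/J^2\lra V\lra 0$; at the cochain level one lifts a Selmer $1$-cocycle to a $1$-cochain with coefficients in $V\otimes\LL/J^2$, applies the differential, and reads off the resulting cocycle, which lands in $V\otimes J/J^2\cong V\otimes\Gamma$ under the normalization fixed in Section~\ref{subsub:localcomputationsgeneral}.

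First I would compute $\beta^1([\alpha])$ on the representative $[\alpha]=[(0,\beta,\widetilde{\alpha})]$, where $\widetilde{\alpha}\in C^0(G_p,V)$ lifts the $G_p$-invariant class $\alpha$ and $\beta\in Z^1(G_p,F_p^+V)$ is characterised by $i_p^+(\beta)=d\widetilde{\alpha}$, so that $\partial_p(\alpha)=[\beta]$. Lifting each entry along $-\otimes 1\colon V\to V\otimes\LL/J^2$ and applying the Selmer-complex differential for the twisted coefficients --- whose deviation from $d\otimes 1$ is governed by the cup product with the tautological character $z\colon G_\QQ\twoheadrightarrow\Gamma$ (with trivial $G_\QQ$-action on $\Gamma$) --- produces a Selmer $2$-cocycle representing $\beta^1([\alpha])$ whose global entry is $0$ and whose local data at $p$ is assembled out of $z_p\cup\beta$ and of correction terms built from $z_p\cup\widetilde{\alpha}$; the relations $i_p^+(\beta)=d\widetilde{\alpha}$ and $dz_p=0$, together with the $G_p$-invariance of $\alpha$, are exactly what is needed for this assignment to be a cocycle. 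In particular $\beta^1([\alpha])$ is ``supported at $p$'' inside $\widetilde{H}^2_f(V)\otimes\Gamma$.

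Next I would evaluate $\langle\beta^1([\alpha]),[\alpha^*]\rangle_{\textup{PT}}$ using the description of the Poitou--Tate pairing for Selmer complexes as the sum over $w\in S$ of the local invariants $\textup{inv}_w$ of the cup products of the matching local entries of the two Selmer cocycles, together with the global cup product (see \cite[\S6.3]{nek}). The global term vanishes since the global entry of $\beta^1([\alpha])$ is $0$; the terms at $\ell\neq p$ vanish because the local complexes at $\ell\neq p$ for $V$ and $V^*$ are acyclic (cf. the discussion around Remark~\ref{rem:whatislambdaBK}; see also Remark~\ref{rem:cohomatellvanishes}). Writing $[\alpha^*]=[(0,\beta^*,\widetilde{\alpha^*})]$ with $i_p^-(\widetilde{\alpha^*})=\alpha_p^*$, only the place $p$ contributes, and simplifying the surviving local cup product there --- using the cocycle relations for both $[\alpha]$ and $[\alpha^*]$ and the vanishing of $\textup{inv}_p$ on coboundaries, so that the $\widetilde{\alpha}$- and $\widetilde{\alpha^*}$-corrections drop out --- leaves $\langle z_p\cup\partial_p(\alpha),\alpha_p^*\rangle_{\textup{Tate}}\in\QQ_p\otimes\Gamma$, computed with the local Tate duality pairing (which is the pairing $\langle\,,\,\rangle_{\textup{Tate},\mathbb{G}_m}$ entering the definition of $\textup{Col}$, compatibly normalized via Remark~\ref{rem:normalizationoftheweilpairing}). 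Finally, associativity of the cup product together with graded-commutativity --- which moves the degree-$1$ class $z_p$ past the degree-$1$ class $\partial_p(\alpha)$ at the cost of a sign $-1$ --- turns this into $-\langle\partial_p(\alpha),z_p\cup\alpha_p^*\rangle_{\textup{Tate}}=\langle\partial_p(\alpha),-z_p\cup\alpha_p^*\rangle_{\textup{Tate}}$, which is the assertion.

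The step I expect to be the main obstacle is the explicit cochain-level computation of the Bockstein for the Selmer complex and the ensuing simplification of the local cup product at $p$: one must keep strict control of the three components of a Selmer cochain, of the cone-and-shift sign conventions of \cite[\S11.1 and \S6.3]{nek}, and of the normalization identifying $J/J^2$ with $\Gamma$, and must check that the auxiliary terms built from $\widetilde{\alpha}$ and $\widetilde{\alpha^*}$ really do collapse (into Selmer coboundaries, or into $\textup{inv}_p$ of coboundaries) so that only $z_p\cup\partial_p(\alpha)$ survives. By contrast, the vanishing of the global and $\ell\neq p$ contributions to the Poitou--Tate pairing and the final graded-commutativity manipulation are routine. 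This computation runs parallel to the one carried out for $\mathbb{G}_m$ in \cite[\S5]{kbbheights} and to Nekov\'a\v{r}'s computations in \cite[\S11.3--11.4]{nek}, which serve as templates.
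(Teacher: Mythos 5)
Your plan is sound and does arrive at the correct sign, but it takes a genuinely different route from the paper: where you propose to unwind the Bockstein at the cochain level and then push the Poitou--Tate pairing down to a single local term at $p$, the paper simply observes that $[\alpha]=[(0,\beta,\widetilde{\alpha})]$ with $[\beta]=\partial_p(\alpha)$ and cites \cite[Corollary 11.4.7 and Remark 11.3.5.3]{nek}, which is precisely the black-box formula your computation would re-derive. Your reconstruction is closer to Nekov\'a\v{r}'s own derivation in \cite[\S11.3--11.4]{nek} and to the $\mathbb{G}_m$-case in \cite{kbbheights}; what it buys is self-containedness and a visible path to the sign $-z_p$, while what the paper's citation buys is brevity and insulation from the cone/shift sign conventions. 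Two small corrections to your outline: the global component of $\beta^1([\alpha])$ is not literally zero but a coboundary (since $\pi_1([\alpha])=0$), which is exactly what Lemma~\ref{lemma:useglobalCFT} in the appendix is designed to handle; and the vanishing at $\ell\neq p$ uses Remark~\ref{rem:cohomatellvanishes} (i.e.\ $H^1_\FF(\QQ_\ell,V)=0$ for $\ell\nmid p$) rather than acyclicity of the full local complex, though for the present $V=V_p(E)$ that distinction is immaterial. The step you flag as the main obstacle --- tracking the three components of a Selmer $2$-cocycle through the Bockstein and checking that the $\widetilde{\alpha}$- and $\widetilde{\alpha^*}$-corrections are killed by $\textup{inv}_p$ --- is genuinely where the work is, and it is precisely the content encapsulated in \cite[Cor.\,11.4.7]{nek}; if you carry it out, you are essentially reproving that corollary, which is legitimate but must be done with the exact sign conventions of \cite[\S6.2.2, \S11.1]{nek}, since a stray sign there would silently change the conclusion.
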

\begin{proof}
This follows from \cite[Corollary 11.4.7]{nek}, along with the remark 11.3.5.3 of loc.cit. Note that $[\alpha]:=[(0,\beta,\widetilde{\alpha})]$ and $[\beta]=\partial_p(\alpha)$\,.
\end{proof}
Recall the local Beilinson-Kato element $z^{\textup{BK}}:=\textup{res}_p(\frak{z}_0^{\textup{BK}})\in H^1(\QQ_p,T^*)$ and the element $\frak{C}_0 \in H^1(G_p,F_p^+T)\cong\widehat{\QQ}_p^{\times}$ we have obtained using the explicit description of Coleman map. 
Recall also the homomorphism $\rho:\Gamma \ra \ZZ_p$\,, which is  the compositum of the maps
$$\xymatrix{\rho: \Gamma \ar[r]^(.46){{\rho_{\textup{cyc}}}}&1+p\ZZ_p\ar[rr]^(.55){-E_p(1)^{-1}\log_p}&&\ZZ_p\,,}$$
where $E_p(s)=1-p^{-s}$ is the Euler factor at $p$. Let $\frak{a}_p: \widehat{\QQ}^\times_p\ra G_p^{\textup{ab}}$ denote local Artin reciprocity map (normalized to send uniformizers to geometric Frobenii) and $\kappa:\widehat{\QQ}^\times_p\stackrel{\sim}{\ra} H^1(\QQ_p,\ZZ_p(1))$ the Kummer isomorphism.
\begin{thm}
\label{thm:heightformular0}$\,$
\begin{itemize}
\item[(i)] $\left\langle[1],[1]\right\rangle_{\textup{Nek}}=E_p(1)^{-1}\log_p(u_E)$.
\item[(ii)] For every $[y_f]=[(y,(y_\ell^+),(\mu_\ell))]\in \frak{s}\left(\textup{Sel}_p(E/\QQ)\otimes\QQ_p\right)\subset\widetilde{H}^1_f(V^*)$  
$$\left\langle[1],[y_f]\right\rangle_{\textup{Nek}}=-\ell_p([y_p^+])$$
and
$$\left\langle[1],[y_f]\right\rangle_{\textup{Nek},\rho}=E_p(1)^{-1}\log_p([y_p^+])\,.$$
Here $\ell_p$ is the compositum
$$\ell_p=z_p\circ \frak{a}_p\circ \kappa^{-1} : H^1(\QQ_p,F_p^+T) \twoheadrightarrow \Gamma\,.$$
\item[(ii)] $\left\langle[\textup{ord}_p(q_E)^{-1}], [\exp_{\omega_E}^*(z^{\textup{BK}})] \right \rangle_{\textup{Nek},\rho}=\langle \frak{C}_0, \textup{res}_p^{s}(\frak{z}_0^{\textup{BK}})\rangle_{\textup{Tate}}$\,.
\item[(iii) ]$\frac{d}{ds}L_p(E,s)\Big|_{s=1}=\left\langle[-\textup{ord}_p(q_E)^{-1}], [\exp_{\omega_E}^*(z^{\textup{BK}})] \right \rangle_{\textup{Nek},\rho}$.
\end{itemize}
\end{thm}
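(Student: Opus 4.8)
The plan is to establish the assertions of the theorem in the order listed, each time invoking Proposition~\ref{prop:computeheightgenerally} to transport the Nekov\'a\v{r} height at $\QQ$ into a computation with the local Tate pairing at $p$. For (i) and (ii) the first step is to pin down the connecting map $\partial_p$ of (\ref{eqn:defdeltaalpha}): the exact sequence (\ref{eqn:tateexactseq}) is, through the Tate uniformization, exactly the extension of $\ZZ_p$ by $\ZZ_p(1)$ whose class in $H^1(G_p,\ZZ_p(1))$ is the Kummer image $\kappa(q_E)$ of the Tate period, so $\partial_p(c)=c\cdot\kappa(q_E)$ after $\otimes\QQ_p$. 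Feeding this into Proposition~\ref{prop:computeheightgenerally} rewrites $\langle[1],[1]\rangle_{\textup{Nek}}$, and more generally $\langle[1],[y_f]\rangle_{\textup{Nek}}$, as the cup product of $\kappa(q_E)$ with the tautological character $z_p$ paired against the appropriate $F^+$-component; the local cup-product formula of class field theory evaluates this via $\langle\kappa(u),z_p\rangle_{\textup{Tate}}=z_p(\frak{a}_p(u))$, which is precisely $\ell_p=z_p\circ\frak{a}_p\circ\kappa^{-1}$. Writing $q_E=p^{\textup{ord}_p(q_E)}u_E$, using that $\QQ_\infty/\QQ$ is totally ramified at $p$ (so the $p^{\textup{ord}_p(q_E)}$-part dies on $\Gamma$) and the description of $\frak{a}_p$ on $\ZZ_p^\times$ via the cyclotomic character, one reads off (i) and the first formula of (ii); post-composing with $\rho=-E_p(1)^{-1}\log_p\circ\rho_{\textup{cyc}}$ turns $\ell_p$ into $-E_p(1)^{-1}\log_p$ on $H^1(\QQ_p,F_p^+T)\subset\widehat{\QQ}_p^\times$ and gives the second.

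The equality $\langle[\textup{ord}_p(q_E)^{-1}],[\exp_{\omega_E}^*(z^{\textup{BK}})]\rangle_{\textup{Nek},\rho}=\langle\frak{C}_0,\textup{res}_p^s(\frak{z}_0^{\textup{BK}})\rangle_{\textup{Tate}}$ is the crux. Applying Proposition~\ref{prop:computeheightgenerally} again writes the left-hand height as a local Tate pairing, and the point is to show that the $F^+$-component assigned by Definition~\ref{def:alphax} to the canonical lift, once expressed through the scalar cut out by $\exp_{\omega_E}^*$, coincides with the derivative-of-Coleman class $\frak{C}_0$ of (\ref{eqn:definederivedcoleman}). I would deduce this from the explicit shape (\ref{eqn:colemanexplicit}) of the Coleman map, from Kato's explicit reciprocity law (\cite{ka1}, which identifies $\exp_{\omega_E}^*(z^{\textup{BK}})$ with the image of $z^{\textup{BK}}$ in the singular quotient $H^1(G_p,F_p^-V^*)$ up to an explicit constant), and from the defining relation $d_\infty=\tfrac{\gamma-1}{\log_p(\rho_{\textup{cyc}}(\gamma))}\frak{C}_\infty$; the compatibility of $\langle\,,\,\rangle_{\textup{Tate},E}$ with $\langle\,,\,\rangle_{\textup{Tate},\mathbb{G}_m}$ recorded in \S\ref{subsec:colemanmapfortate} then converts $\langle\frak{C}_0,z^{\textup{BK}}\rangle_{\textup{Tate},E}$ into $\langle\frak{C}_0,\textup{res}_p^s(\frak{z}_0^{\textup{BK}})\rangle_{\textup{Tate}}$.

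Finally, for (iii) I would compute the left-hand side by the technique of \S\ref{subsub:localcomputationsgeneral}. By Remark~\ref{rem:powerseriesvsmeasures}, $L_p(E,s)=\int_\Gamma\rho_{\textup{cyc}}^{s-1}\,d\al_E$, so $\tfrac{d}{ds}L_p(E,s)\big|_{s=1}=\int_\Gamma\log_p(\rho_{\textup{cyc}})\,d\al_E$, which, since $\pmb{1}(\al_E)=0$, depends only on the image of $\al_E$ in $J/J^2$. Now (\ref{eqn:katofundamental}) places $\al_E=\textup{Col}(\textup{res}_p(\frak{z}_\infty^{\textup{BK}}))$ within the framework of \S\ref{subsub:localcomputationsgeneral} — up to the involution $\tau\mapsto\tau^{-1}$ on $\LL$ it is the element $\al_\xi$ built there from the norm-coherent system $\xi=\{c_n\}$ underlying the Coleman map, which satisfies $\xi_0=c_0=0$ because $\textbf{N}$ is the identity on $\Phi_0=\QQ_p$ — and $\frak{C}_\infty$ provides the element $\xi'$ of Lemma~\ref{lem:derexists} attached to this $\xi$. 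Hence Lemma~\ref{lem:normalizedderivedmeasure} together with Lemma~\ref{lem:calculateder} identifies $\tfrac{d}{ds}L_p(E,s)\big|_{s=1}$, up to sign, with $\langle\frak{C}_0,z^{\textup{BK}}\rangle_{\textup{Tate},E}=\langle\frak{C}_0,\textup{res}_p^s(\frak{z}_0^{\textup{BK}})\rangle_{\textup{Tate}}$, and comparing with the previous paragraph — the sign being absorbed into the passage from $[\textup{ord}_p(q_E)^{-1}]$ to $[-\textup{ord}_p(q_E)^{-1}]$ — yields the assertion. I expect the real obstacle to be precisely this last sign comparison: one must simultaneously control the sign of the Weil pairing, the sign built into the Tate uniformization (cf.\ Remark~\ref{rem:normalizationoftheweilpairing}), the chosen normalization of $\frak{a}_p$, and the relation between $\rho$ and $\rho_{\textup{cyc}}$, and an error in any one of them flips the final answer.
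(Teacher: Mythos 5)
Your treatment of (i) and (ii) follows the paper exactly: describe $\partial_p$ as multiplication by the Kummer class of $q_E$, apply Proposition~\ref{prop:computeheightgenerally}, use that $z_p$ factors through inertia to discard the $p^{\textup{ord}_p(q_E)}$-part, and invoke the compatibility of $\frak{a}_p$ with $\rho_{\textup{cyc}}$ (the paper's reference is \cite[Lemma~II.1.4.5]{katodr}). Your treatment of (iv) is likewise the one in the paper: convert $\tfrac{d}{ds}L_p(E,s)|_{s=1}$ into $\int_\Gamma \log_p\rho_{\textup{cyc}}\,d\al_E$, recognize $\al_E$ as (the involution of) an $\al_\xi$-type element with $\xi=d_\infty$, note $\xi_0=d_0=1$ has trivial Kummer image so $\xi_0=0$, and apply Lemma~\ref{lem:calculateder} with $\xi'=\frak{C}_\infty$.

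The genuine difference, and the place where your argument goes wrong, is the third assertion. You propose to apply Proposition~\ref{prop:computeheightgenerally} directly to the pair $\bigl([\textup{ord}_p(q_E)^{-1}],[\exp^*_{\omega_E}(z^{\textup{BK}})]\bigr)$ and then \emph{match the $F^+$-component of the lift with $\frak{C}_0$}. That match is false: $\partial_p(\textup{ord}_p(q_E)^{-1})=\textup{ord}_p(q_E)^{-1}\cdot\kappa(q_E)$ is a class of positive valuation, while $\frak{C}_0$ is by construction a class in $\ZZ_p^\times\widehat\otimes\QQ_p$ built from $d_\infty=\tfrac{\gamma-1}{\log_p\rho_{\textup{cyc}}(\gamma)}\frak{C}_\infty$; these are simply different elements of $\widehat{\QQ}_p^\times$, not identifiable up to the ``scalar cut out by $\exp^*_{\omega_E}$''. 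Relatedly, your appeal to Definition~\ref{def:alphax} and the splitting $\frak{s}$ is misplaced here: in the case $r_{\textup{an}}(E)=0$ one has $\textup{Sel}_p(E/\QQ)\otimes\QQ_p=0$, so $[\exp^*_{\omega_E}(z^{\textup{BK}})]$ arises from the isomorphism $H^0(G_p,F_p^-V^*)\cong\widetilde{H}^1_f(V^*)$ of~(\ref{eqn:isomnek1}), not from the lift $\frak{s}$ of a Selmer class, and $\alpha(x)$ plays no role. What the paper actually does is purely numerical: by bilinearity and part (i), the left-hand height equals the scalar $E_p(1)^{-1}\textup{ord}_p(q_E)^{-1}\log_p(u_E)\cdot\exp^*_{\omega_E}(z^{\textup{BK}})$, and the equality with the Tate pairing $\langle\frak{C}_0,\textup{res}_p^s(\frak{z}_0^{\textup{BK}})\rangle_{\textup{Tate}}$ is then cited as the main computation of \cite[\S4]{kobayashi06} (which indeed rests on Kato's reciprocity law and the explicit Coleman map, so the ingredients you name are the right ones --- they just don't produce a cocycle-level identification of $\partial_p$ with $\frak{C}_0$, only an equality of resulting scalars). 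You should replace the element-matching claim with this numerical comparison, after which (iv) follows as you indicate.
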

\begin{proof}
Both (i) and (ii) follow from \cite[Corollary 11.4.7]{nek}; we give a sketch of the proof for (i). Let $\chi_p$ be the compositum $\chi_p=\rho\circ z_p\circ \frak{a}_p:\widehat{\QQ}_p^\times\ra\ZZ_p\,$.
Since the image of $1 \in\QQ_p= H^0(G_p,F_p^-V)$ under the map (\ref{eqn:defdeltaalpha}) is $q_E$\,, it follows from Proposition~\ref{prop:computeheightgenerally} that
\begin{align}\notag\left\langle[1], [1] \right \rangle_{\textup{Nek},\rho}&=-\left\langle q_E,\chi_p\right\rangle_{\textup{Tate}}\\
\label{eqn:trial11}&=-\left\langle u_{E},\chi_p \right\rangle_{\textup{Tate}}\\
\label{eqn:trial33}&=E_p(1)^{-1}\,\log_p(u_{E})
\end{align}
where the equality (\ref{eqn:trial11}) is because the homomorphism $z_p$ factors through the inertia subgroup of $G_p^{\textup{ab}}$ and (\ref{eqn:trial33}) follows using \cite[Lemma II.1.4.5]{katodr} which asserts that
$$\log_p\circ \rho_{\textup{cyc}}\circ z_p \circ \frak{a}_p: \widehat{\QQ}_p^\times \lra  \widehat{\QQ}_p^\times$$
equals $\log_p$ with our choice of normalizations. This proof of (i) follows. The proof of (ii) may be extracted from \cite[Corollary 11.4.7]{nek} in a similar manner.

It now follows from (i) that 
\begin{align}
\notag \left\langle[\textup{ord}_p(q_E)^{-1}], [\exp_{\omega_E}^*(z^{\textup{BK}})] \right \rangle_{\textup{Nek},\rho}&=E_p(1)^{-1}\textup{ord}_p(q_E)^{-1}\log_p(u_E)\exp_{\omega_E}^*(z^{\textup{BK}})\\
\label{eqn:trial2}&=\langle \frak{C}_0, \textup{res}_p^{s}(\frak{z}_0^{\textup{BK}})\rangle_{\textup{Tate}}
\end{align}
where  (\ref{eqn:trial2}) is the main calculation carried out in \cite[\S4]{kobayashi06}. This proves (iii).

To prove (iv) observe that $\frac{d}{ds}\rho_{\textup{cyc}}^{s-1}=\log_p\rho_{\textup{cyc}}\cdot \rho_{\textup{cyc}}^{s-1}$, hence
\begin{align*}\frac{d}{ds}L_p(E,s)\Big|_{s=1}&=\int_\gamma \log_p\rho_{\textup{cyc}}\cdot d\al_E\\
&=\lim_{n\ra\infty} \sum_{\tau \in \Gamma_n} \log_p\rho_{\textup{cyc}}(\tau)\left\langle d_n^\tau\,,\,\textup{res}_p^s(\frak{z}^{\textup{BK}}_\infty)\right\rangle_{\textup{Tate}}\\
&=\lim_{n\ra\infty} \left\langle \sum_{\tau \in \Gamma_n} \log_p\rho_{\textup{cyc}}(\tau)\cdot d_n^\tau\,,\,\textup{res}_p^s(\frak{z}^{\textup{BK}}_\infty)\right\rangle_{\textup{Tate}}
\end{align*}
where the second equality follows from the explicit description of the Coleman map (essentially (\ref{eqn:colemanexplicit}), see also \cite[p. 572]{kobayashi06}). By Lemma~\ref{lem:calculateder} applied with $X=F^+_pT$, $X^*=F_p^-T^*$, $\xi=d_\infty$ (so that $\xi^\prime_0=\frak{C}_0$) and $\mathbf{z}=\textup{res}_p^s(\frak{z}^{\textup{BK}}_\infty)$,
$$\lim_{n\ra\infty} \left\langle \sum_{\tau \in \Gamma_n} \log_p\rho_{\textup{cyc}}(\tau)\cdot d_n^\tau\,,\,\textup{res}_p^s(\frak{z}^{\textup{BK}}_\infty)\right\rangle_{\textup{Tate}}=\langle \frak{C}_0,\textup{res}_p^s(\frak{z}_0^{\textup{BK}})\rangle_{\textup{Tate}}$$
and (iv) now follows from (iii).
\end{proof}
\subsection{Height formulas in the case $r_{\textup{an}}(E)=1$}
\label{subsec:compuationr1}
Until the end of this article, suppose that $r_{\textup{an}}(E)=1$. Assume in addition that $E(\QQ)[p]=0$. As we have noted in Remark~\ref{rem:residuallyirred}, this assumption implies that Beilinson-Kato elements are integral:
$$\frak{z}_\infty^{\textup{BK}}=\{\frak{z}^{\textup{BK}}_n\}\in H^1(\QQ,T\otimes\LL).$$
Above we had introduced Beilinson-Kato elements $\frak{z}^{\textup{BK}}_0$ as elements of the cohomology group $H^1(\QQ,T^*)$. Using the natural isomorphism $T\cong T^*$ we may regard them as classes for $T$ as well.
Recall that $z^{\textup{BK}}:=\textup{res}_p(\frak{z}^{\textup{BK}}_0)\in H^1(\QQ_p,T)$.
\begin{prop}
\label{prop:locpkatononzero}
Under the running assumptions ${z}^{\textup{BK}}\neq 0$.
\end{prop}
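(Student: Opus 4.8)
The plan is to deduce the non-vanishing of $z^{\textup{BK}}=\textup{res}_p(\frak{z}_0^{\textup{BK}})$ from the interpolation property of the Coleman map together with the order-of-vanishing information coming from the Greenberg--Stevens formula. First, I would recall from (\ref{eqn:katofundamental}) that $\textup{Col}(\textup{res}_p(\frak{z}_\infty^{\textup{BK}}))=\al_E$, and that $\al_E$ vanishes at the trivial character with $\textup{ord}_{s=1}L_p(E,s)\geq 2$ when $r_{\textup{an}}(E)=1$ (this is exactly the consequence of Greenberg--Stevens recorded after (\ref{eqn:grste})). The augmentation-ideal filtration on $\LL$ then tells us that $\al_E\in J^2$, so in particular the image of $\textup{res}_p(\frak{z}_\infty^{\textup{BK}})$ under $\textup{Col}$ lies in $J^2$. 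The heart of the argument is to compare this with the behaviour of $z^{\textup{BK}}=z_0^{\textup{BK}}$, the ``bottom layer'' of $\frak{z}_\infty^{\textup{BK}}$, under the specialization $\LL\twoheadrightarrow\ZZ_p$.

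Next I would invoke Theorem~\ref{thm:rodolfopr} (Venerucci): since $r_{\textup{an}}(E)=1\leq 1$, the global class $\frak{z}_0^{\textup{BK}}\in H^1(\QQ,T^*)\otimes\QQ_p$ is non-zero. The key remaining point is that its localization at $p$ is still non-zero. For this I would use Kato's reciprocity law / the explicit description of the Coleman map: the dual exponential $\exp_{\omega_E}^*(z^{\textup{BK}})$ is, up to the non-zero factor computed in Theorem~\ref{thm:heightformular0}(iii) (and in Kobayashi's \S4), essentially the value $\pmb{1}(\al_E)/(\text{period})$ — but this value is $0$ precisely because of the exceptional zero, so this does not yet give non-vanishing. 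Instead the argument must pass one level deeper: using Lemma~\ref{lem:derexists} one writes $\textup{res}_p(\frak{z}_\infty^{\textup{BK}})$ and extracts its derivative $\mathcal{D}(\frak{z}_\infty^{\textup{BK}})\in H^1(\QQ_p,F_p^-V)$ (the Bockstein-normalized derivative appearing in Remark~\ref{rem:whatislambdaBK}), and one shows that pairing this derivative against the derivative $\frak{C}_0$ of the Coleman map recovers $\tfrac12 L_p''(E,1)$ up to a non-zero constant. If $z^{\textup{BK}}$ were zero, then $\textup{res}_p(\frak{z}_\infty^{\textup{BK}})$ would be divisible by $(\gamma-1)$ — but I must be careful: that alone is consistent with the exceptional zero. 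The cleaner route is: if $z^{\textup{BK}}=0$ then, because $z^{\textup{BK}}$ lies in the Selmer group $H^1_{\FF}(\QQ,V)$ (as $\frak{z}_0^{\textup{BK}}$ does, by Kato's local conditions and Remark~\ref{rem:cohomatellvanishes}) and $\textup{res}_p$ is injective on the relevant rank-one piece when $r_{\textup{an}}(E)=1$, we would get $\frak{z}_0^{\textup{BK}}=0$, contradicting Theorem~\ref{thm:rodolfopr}.

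So the precise skeleton I would write is: (1) By Theorem~\ref{thm:rodolfopr}, $\frak{z}_0^{\textup{BK}}\neq 0$ in $H^1(\QQ,V)$. (2) Kato's Euler system argument (or the known rank bound $\dim_{\QQ_p} H^1_{\FF}(\QQ,V)=1$ under $r_{\textup{an}}(E)=1$, which follows from Kato's divisibility in the main conjecture together with the analytic rank-one hypothesis) shows that $H^1_{\FF}(\QQ,V)$ is one-dimensional, spanned by $\frak{z}_0^{\textup{BK}}$, and that $\frak{z}_0^{\textup{BK}}\notin H^1_f(\QQ,V)_{\text{with trivial local condition at }p}$ — equivalently, $\textup{res}_p:H^1_{\FF}(\QQ,V)\to H^1(\QQ_p,F_p^+V)$ is injective. (3) Conclude $z^{\textup{BK}}=\textup{res}_p(\frak{z}_0^{\textup{BK}})\neq 0$. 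The main obstacle I anticipate is step (2): establishing that the restriction at $p$ is injective on the relevant one-dimensional Selmer space. This should be handled by the global Poitou--Tate / Cassels--Tate sequence — if $z^{\textup{BK}}$ were a universal norm that dies at $p$ it would force the corresponding Selmer group over $\QQ_\infty$ (or its characteristic ideal) to have an extra zero inconsistent with $\textup{ord}_{s=1}L_p(E,s)=2$, which is what Greenberg--Stevens pins down. In the write-up I would isolate this as the one substantive lemma and reduce everything else to bookkeeping with the exact sequences of Section~2 and the explicit Coleman-map formula (\ref{eqn:colemanexplicit}).
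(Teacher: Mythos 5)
Your skeleton isolates the right two ingredients: Venerucci's theorem gives $\frak{z}_0^{\textup{BK}}\neq 0$, and the remaining issue is to show $\textup{res}_p$ cannot kill this nonzero class, i.e.\ that the strict Selmer group $H^1_{\FF_{\textup{str}}}(\QQ,V)$ (trivial local condition at $p$) vanishes. You correctly flag this as ``the main obstacle.'' The problem is that your proposed resolution of that obstacle is circular: you want to appeal to ``$\textup{ord}_{s=1}L_p(E,s)=2$, which is what Greenberg--Stevens pins down,'' but Greenberg--Stevens only yields $\textup{ord}_{s=1}L_p(E,s)\geq 2$ when $r_{\textup{an}}(E)=1$ (the first derivative vanishes). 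The equality $\textup{ord}_{s=1}L_p(E,s)=2$ is precisely Corollary~C of the paper, which depends on Theorem~B, which depends on the present Proposition. You cannot feed the intended conclusion back in as an input. The allusion to ``Kato's divisibility in the main conjecture together with the analytic rank-one hypothesis'' is likewise not a proof: to get the vanishing of the strict Selmer group out of an Iwasawa main conjecture divisibility one would again need a handle on $\textup{ord}_{s=1}L_p(E,s)$ that you do not have.

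The paper avoids the $p$-adic $L$-function entirely at this step. It argues by contradiction: if $z^{\textup{BK}}=0$ then $\frak{z}_0^{\textup{BK}}$ is a nontorsion element of the rank-$\geq1$ module $H^1_{\FF_{\textup{str}}}(\QQ,T)$; propagating mod $p^n$ (\cite[Lemma~3.7.1]{mr02}) and using local duality (\cite[Lemma~I.3.8(i)]{r00}) one gets $\textup{length}_{\ZZ_p}H^1_{\FF_{\textup{can}}^*}(\QQ,E[p^n])\geq n$ for all $n$, forcing $H^1_{\FF_{\textup{can}}^*}(\QQ,E[p^\infty])$ to be infinite. But since $\frak{z}_0^{\textup{BK}}\neq 0$, the Mazur--Rubin Euler system bound \cite[Cor.~5.2.13]{mr02} (applicable because $E(\QQ)[p]=0$, via \cite[Lemma~3.5.3]{mr02}) shows $H^1_{\FF_{\textup{can}}^*}(\QQ,E[p^\infty])$ is finite. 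Contradiction. This is a self-contained cohomological/Euler-system argument with no reference to $L_p(E,s)$ and no rank computation for $H^1_{\FF}(\QQ,V)$; your step~(2) needs to be replaced by this mechanism (or an equivalent concrete one), not by an appeal to the order of vanishing you are ultimately trying to establish. The preliminary observations in your proposal about $\al_E\in J^2$ and the Coleman-map interpolation are correct but end up not being used, and you appropriately recognize that the dual-exponential calculation gives $0$ and cannot help directly.
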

\begin{proof}
Assume on the contrary that
\be\label{eqn:katovanishes}
{z}^{\textup{BK}}=\textup{res}_p(\frak{z}^{\textup{BK}}_0)=0.
\ee
Let $\FF_{\textup{str}}$ denote the Selmer structure on $T$ given by
\begin{itemize}
\item $H_{\FF_{\textup{str}}}(\QQ_\ell,T)=H_{\FF}(\QQ_\ell,T)$, if $\ell\neq p$,
\item $H_{\FF_{\textup{str}}}(\QQ_p,T)=0$.
\end{itemize}
so that (\ref{eqn:katovanishes}) amounts to saying $\frak{z}^{\textup{BK}}_0 \in H_{\FF_{\textup{str}}}(\QQ,T)$. As $\frak{z}^{\textup{BK}}_0$ is non-torsion thanks to our running assumptions and Theorem~\ref{thm:rodolfopr}, it follows that $\textup{rank}_{\ZZ_p}(H^1_{\FF_{\textup{str}}}(\QQ,T))\geq 1.$

Let $\FF_{\textup{str}}$ denote also the propagation of the Selmer structure (in the sense of \cite{mr02}) to $T/p^nT$. For any positive integer $n$, identify the quotient $T/p^nT$ with $E[p^n]$. By \cite[Lemma 3.7.1]{mr02}, we have an injection
$$H_{\FF_{\textup{str}}}(\QQ,T)/p^nH_{\FF_{\textup{str}}}(\QQ,T) \hookrightarrow H_{\FF_{\textup{str}}}(\QQ,T/p^nT)=H_{\FF_{\textup{str}}}(\QQ,E[p^n])$$
induced from the projection $T\ra T/p^nT$. This shows that
\be\label{eqn:lowerboundonstr}
\textup{length}_{\ZZ_p}\left(H_{\FF_{\textup{str}}}(\QQ,E[p^n])\right)\geq n.
\ee
Let now $\FFc$ denote the canonical Selmer structure on $T$, given by
\begin{itemize}
\item $H_{\FF_{\textup{can}}}(\QQ_\ell,T)=H_{\FF}(\QQ_\ell,T)$, if $\ell\neq p$,
\item $H_{\FF_{\textup{can}}}(\QQ_p,T)=H^1(\QQ_p,T)$.
\end{itemize}
Let $\FFc^*$ denote the dual Selmer structure on $\textup{Hom}(T,\pmb{\mu}_{p^\infty})\cong E[p^\infty]$, where the isomorphism is obtained via the Weil-pairing. The propagation of $\FFc^*$ on $E[p^\infty]$ to its submodule $E[p^n]$ will also be denoted by $\FFc^*$. It follows from \cite[Lemma I.3.8(i)]{r00} (together with the discussion in \cite[\S6.2]{mr02}) that we have an inclusion
$$H_{\FF_{\textup{str}}}(\QQ_\ell,E[p^n])\subset H_{\FF_{\textup{can}}^*}(\QQ_\ell,E[p^n])$$
for every $\ell$, which in turn shows that together with (\ref{eqn:lowerboundonstr}) that
\be
\label{eqn:lowerboundoncan}
\textup{length}_{\ZZ_p}\left(H_{\FF_{\textup{can}}^*}(\QQ,E[p^n])\right)\geq n.
\ee
On the other hand, as $\frak{z}^{\textup{BK}}_0\neq0$, it follows from \cite[Cor. 5.2.13]{mr02} that $H_{\FF_{\textup{can}}^*}(\QQ,E[p^\infty])$ is finite. This however shows that the length of
$$H_{\FF_{\textup{can}}^*}(\QQ,E[p^n])\cong H_{\FF_{\textup{can}}^*}(\QQ,E[p^\infty])[p^n]$$
(where the isomorphism is thanks to \cite[Lemma 3.5.3]{mr02}, which holds true here owing to our assumption that $E(\QQ)[p]=0$) is bounded independently of $n$. This contradicts (\ref{eqn:lowerboundoncan}) and shows that our assumption (\ref{eqn:katovanishes}) is wrong.
\end{proof}
\begin{rem}
 \label{lem:prreduction}
 In this remark we elaborate on the ``only if'' part of Conjecture~\ref{conj:PR}. Suppose $\frak{z}_0^{\textup{BK}}\in H^1(\QQ,T^*)$ is non-torsion\symbolfootnote[3]{Under the assumption that $E(\QQ)[p]=0$, the $\ZZ_p$-module $H^1(\QQ,T^*)$ is torsion-free. Hence, our assumption amounts to asking that $\frak{z}_0^{\textup{BK}}\neq 0$\,.}.  It follows by the theory of Euler systems that the strict Selmer group
 $$H^1_{\FFc^*}(\QQ,V/T):=\ker(H^1_{\FF}(\QQ,V/T)\lra H^1(\QQ_p,V/T))$$
 is finite. It then follows from global duality (c.f., Theorem 5.2.15 and Corollary 5.2.6 of \cite{mr02}) that
 \be\label{eqn:surelyrankone}\textup{rank}_{\ZZ_p}(H^1_{\FFc}(\QQ,T^*))= \textup{dim}_{\QQ_p}(V^*)^-=1,\ee
where $(V^*)^-$ stands for the $-1$-eigenspace of $V^*$ of a fixed complex conjugation in $G_\QQ$. This in turn shows that $\textup{rank}_{\ZZ_p}(\textup{Sel}(\QQ,T^*))\leq 1$. The conjecture of Birch and Swinnerton-Dyer then predicts the assertion of Conjecture~\ref{conj:PR}.

Suppose now that $\textup{rank}_{\ZZ_p}(\textup{Sel}(\QQ,T^*))= 0$. As explained in \eqref{eqn:surelyrankone}, the $\ZZ_p$-module $H^1_{\FFc}(\QQ,T^*)$ is of rank $1$ and that $\textup{res}_p^s(\frak{z}^{\textup{BK}}_0)\neq 0$. Kato's reciprocity law implies in this case that $L(E,1)\neq 0$, \emph{unconditionally}.

In the case $\textup{rank}_{\ZZ_p}(\textup{Sel}(\QQ,T^*))= 1$, unfortunately we are not able to go this far. As $\textup{rank}_{\ZZ_p}(\textup{Sel}(\QQ,T^*))= 1$, we conclude by \eqref{eqn:surelyrankone} that $H^1_{\FFc}(\QQ,T^*)\otimes\QQ_p=\textup{Sel}(\QQ,T^*)\otimes\QQ_p$ and hence $\frak{z}^{\textup{BK}}_0 \in \textup{Sel}(\QQ,T^*)\otimes\QQ_p$. One would then expect to relate the height of $\frak{z}^{\textup{BK}}_0$ to $L^\prime(E,1)$\symbolfootnote[2]{As a matter of fact, as $\textup{Sel}(\QQ,T^*)$ is rank one, one would expect that $\frak{z}^{\textup{BK}}_0$ relates to Heegner points. This indeed is the content of Perrin-Riou's conjecture.} 
and conclude this way that $L^\prime(E,1)\neq 0$. This, however, seems untractable at this stage\symbolfootnote[4]{See, however, Venerucci's thesis for progress in this direction.}$\,$\symbolfootnote[1]{When the author was preparing this version of this article, Venerucci indeed announced a proof of this conjecture under the additional assumption that the $p$-part of the Tate-Shafarevich group is finite.}. When $p$ is a good-ordinary prime, Perrin-Riou in \cite{pr93grenoble} shows that the $p$-adic height of $\frak{z}^{\textup{BK}}_0$ is related to the derivative of the Mazur-Tate-Teitelbaum $p$-adic $L$-function. Our Theorem~\ref{thm:mainr1} below extends this to the case where $p$ is a prime of split multiplicative reduction.
 \end{rem}
\subsubsection{Definition of the normalization factor $\lambda_{\textup{BK}}$}
\label{subsec:definelambdaBK}
In this section we determine an element $\lambda_{\textup{BK}}$ which will have the property outlined in Remark~\ref{rem:whatislambdaBK}. 

Until the end of this article we assume that Nekov\'a\v{r}'s $p$-adic height pairing is non-degenerate. According to \cite[11.4.9]{nek} this is equivalent to asking that Schneider's height pairing is non-degenerate.
\begin{define} Let $\alpha_{\textup{BK}}:=\alpha(\frak{z}_0^{\textup{BK}}) \in \ZZ_p$ be given as in Definition~\ref{def:alphax}. It follows from Proposition~\ref{prop:locpkatononzero} that $\alpha_{\textup{BK}}$ is non-zero.
\end{define}
\begin{define}
Set $\lambda_{\textup{BK}}=\textup{ord}_p(\frak{C}_0)\cdot\left(\displaystyle{\frac{1}{\alpha_{\textup{BK}}}-\frac{\alpha_{\textup{BK}}}{\widetilde{h}_p(\frak{z}_0^{\textup{BK}})}\cdot \frac{\al}{\textup{ord}_p(q_E)}}\right)\,.$ Here $\al={\displaystyle\frac{\log_p(q_E)}{\textup{ord}_p(q_E)}}$ is the Mazur-Tate-Teitelbaum $\al$-invariant and 
$$\widetilde{h}_p(\frak{z}_0^{\textup{BK}}):\log_p\circ \rho_{\textup{cyc}}\left(\langle \frak{s}(\frak{z}_0^{\textup{BK}})\,,\, \frak{s}(\frak{z}_0^{\textup{BK}})\rangle_{\textup{Nek}}\right)$$ 
is the Nekov\'a\v{r}-height of $\frak{s}(\frak{z}_0^{\textup{BK}})$. Note that our normalization here differs from that of Section~\ref{subsec:compuationr0} by the factor $-E_p(1)$. 
\end{define}
The definition of $\lambda_{\textup{BK}}$ makes sense since we assumed that the $p$-adic height pairing (in particular its restriction to the image of $\frak{s}$ which compares with the classical $p$-adic height pairings) is non-degenerate. 
\begin{define}
\label{def:nekovarliftofkato}
We define the normalization of the Beilinson-Kato element as the lift
$$[(\frak{Z},(\frak{Z}_\ell^+),(\nu_\ell))]=\widetilde{\frak{Z}}_{\textup{BK}}=\frak{s}(\lambda_{\textup{BK}}\cdot\frak{z}_0^{\textup{BK}}) \in \widetilde{H}^1_f(V).$$ 
\end{define}
We shall prove below (Proposition~\ref{lambdaisthecorrectlambda}) that it is the sought after factor verifying 
the desired identity (\ref{eqn:reducetodercolemanahainstderbk}) and in Proposition~\ref{prop:lambdaisnonvanishing} that $\lambda_{\textup{BK}}\neq 0$. Let $\mathcal{D}(\frak{z}_\infty^{\textup{BK}})$ be the class denoted by $[Dx_{\textup{Iw}}]$ in Lemma~\ref{lem:bocksteinnormalizedderivative} (attached to the data $[x_\textup{Iw}]=\frak{z}_\infty^{\textup{BK}}$ and $[x_f]=\frak{s}(\frak{z}_0^{\textup{BK}})$).
 \begin{prop}
 \label{lambdaisthecorrectlambda}
 The element $\frak{C}_0 -\frak{Z}_p^+ \in H^1(\QQ_p,F_p^+V)$ annihilates $\mathcal{D}(\frak{z}_\infty^{\textup{BK}}) \in H^1(\QQ_p,F_p^-V)$ under the Tate pairing.
 \end{prop}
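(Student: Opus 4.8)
The plan is to unwind the defining identities of all the objects involved and reduce the claim to the relation $z_p^+ = \frak{Z}_p^+$, where $z_p^+$ is the local component of a lift of $\lambda_{\textup{BK}}\cdot\frak{z}_0^{\textup{BK}}$ in the sense of Remark~\ref{rem:whatislambdaBK} and the identity \eqref{eqn:reducetodercolemanahainstderbk}. Recall that by Definition~\ref{def:nekovarliftofkato} we have $\widetilde{\frak{Z}}_{\textup{BK}} = \frak{s}(\lambda_{\textup{BK}}\cdot\frak{z}_0^{\textup{BK}}) = [(\frak{Z},(\frak{Z}_\ell^+),(\nu_\ell))]$, so $\frak{Z}_p^+$ is exactly the local component prescribed by Definition~\ref{def:alphax} applied to $x = \lambda_{\textup{BK}}\cdot\frak{z}_0^{\textup{BK}}$; explicitly, in the basis $\frak{B}=\{p,q_E\}$ of $\widehat{\QQ}_p^\times\otimes\QQ_p$ its class is $[\frak{Z}_p^+] = u_E\otimes\frac{-\alpha(\lambda_{\textup{BK}}\cdot\frak{z}_0^{\textup{BK}})}{\textup{ord}_p(q_E)} = u_E\otimes\frac{-\lambda_{\textup{BK}}\,\alpha_{\textup{BK}}}{\textup{ord}_p(q_E)}$ since $\alpha$ is linear. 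On the other side, $\frak{C}_0 \in H^1(\QQ_p,F_p^+V)\cong\widehat{\QQ}_p^\times\otimes\QQ_p$ is the derivative of the Coleman map, whose class I will need to express in the same basis.

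First I would compute $[\frak{C}_0]$ in the basis $\frak{B}$. Writing $[\frak{C}_0] = p\otimes a + q_E\otimes b$ and using $[\frak{C}_0]\equiv u_E\otimes\frac{-1}{\textup{ord}_p(q_E)}\cdot(\cdot)$-type manipulations as in the computation marked $(\star)$ in Section~\ref{subsubsec:liftBKtotilde}, the quantity $\textup{ord}_p(\frak{C}_0)$ is the coefficient of the $p$-component, and $\log_p(\frak{C}_0)$ is its image under $\log_p$ (which kills the unit $u_E$-part only modulo the ambiguity in the basis). Since $\frak{C}_0$ is only well-defined up to $H^1(\QQ_p,F_p^+V)$-elements killed by the relevant map, the genuinely well-defined invariants are $\textup{ord}_p(\frak{C}_0)$ and the pairing values; it is these that enter the definition of $\lambda_{\textup{BK}}$. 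Next I would compute the Tate pairing of both $\frak{C}_0$ and $\frak{Z}_p^+$ against $\mathcal{D}(\frak{z}_\infty^{\textup{BK}}) \in H^1(\QQ_p,F_p^-V)$. For $\frak{C}_0$ this uses \eqref{eqn:derivativescompared} together with Theorem~\ref{thm:heightformular0}(iii) and the main Coleman/Kobayashi computation (the display following \eqref{eqn:trial2}), which identifies $\langle\frak{C}_0,\textup{res}_p^s(\frak{z}_\infty^{\textup{BK}})\rangle_{\textup{Tate}}$ up to the normalization factor $-E_p(1)$ with a derivative-of-$L_p$ quantity, hence via Greenberg--Stevens with $\frac{\log_p(q_E)}{\textup{ord}_p(q_E)}\cdot\frac{L(E,1)}{\Omega_E^+}$, which in turn is $\al\cdot\langle\text{Kato class paired with itself}\rangle$ through the reciprocity law.

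For $\frak{Z}_p^+$ the key input is that $\mathcal{D}(\frak{z}_\infty^{\textup{BK}}) = \frak{z}_0^{\textup{BK},\prime}\otimes\gamma_0$ (Lemma~\ref{lem:calculateder} and the discussion after it), so $\langle\mathcal{D}(\frak{z}_\infty^{\textup{BK}}),\frak{Z}_p^+\rangle_{\textup{Tate}}$ is computable from the local Tate pairing between the unit $u_E$-part of $[\frak{Z}_p^+]$ and the derivative of the Beilinson--Kato class; using that $z_p$ factors through inertia (exactly as in the derivation of \eqref{eqn:trial11}), only the $u_E$-component survives, and the value comes out proportional to $\lambda_{\textup{BK}}\,\alpha_{\textup{BK}}/\textup{ord}_p(q_E)$ times $\widetilde{h}_p(\frak{z}_0^{\textup{BK}})$ (the self-height of the Kato class, which is the ``derivative'' direction). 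Then the claim that $\frak{C}_0-\frak{Z}_p^+$ annihilates $\mathcal{D}(\frak{z}_\infty^{\textup{BK}})$ becomes an identity between these two scalar expressions; substituting the definition
$$\lambda_{\textup{BK}} = \textup{ord}_p(\frak{C}_0)\cdot\left(\frac{1}{\alpha_{\textup{BK}}} - \frac{\alpha_{\textup{BK}}}{\widetilde{h}_p(\frak{z}_0^{\textup{BK}})}\cdot\frac{\al}{\textup{ord}_p(q_E)}\right)$$
should make it an identity — indeed $\lambda_{\textup{BK}}$ was reverse-engineered precisely so that the $1/\alpha_{\textup{BK}}$ term matches the $\textup{ord}_p(\frak{C}_0)$ contribution and the second term cancels the self-height contribution.

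\textbf{Main obstacle.} The delicate point is bookkeeping the normalizations: the factor $-E_p(1)$ warned about in the definition of $\lambda_{\textup{BK}}$, the sign ambiguities in the Tate uniformization and Weil pairing flagged in Remark~\ref{rem:normalizationoftheweilpairing}, the difference between $\log_p\circ\rho_{\textup{cyc}}$ and the normalized $\rho$, and the fact that $\frak{C}_0$ and $\mathcal{D}(\frak{z}_\infty^{\textup{BK}})$ are each only defined up to an $H^0$-ambiguity — so one must check the pairing value is genuinely well-defined (it is, by the last sentence of Section~\ref{subsub:localcomputationsgeneral}: the value on a universal norm is canonical). Once every scalar is pinned down in the same basis with consistent signs, the identity is forced by the definition of $\lambda_{\textup{BK}}$; the real work is verifying the bookkeeping rather than proving anything deep.
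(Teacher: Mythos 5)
There is a genuine gap, and the route you sketch would not close.

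Your plan hinges on computing $D(\frak{C}_0):=\bigl\langle\mathcal{D}(\frak{z}^{\textup{BK}}_\infty),\frak{C}_0\bigr\rangle_{\textup{Tate}}$ directly via Theorem~\ref{thm:heightformular0}(iii), the Kobayashi computation and Greenberg--Stevens. But those inputs give $\bigl\langle\frak{C}_0,\textup{res}_p^s(\frak{z}^{\textup{BK}}_0)\bigr\rangle_{\textup{Tate}}=\frac{d}{ds}L_p(E,s)\big|_{s=1}$ (a pairing against the Kato class itself, not its derivative), and in the relevant regime $r_{\textup{an}}(E)=1$ this quantity vanishes -- Greenberg--Stevens returns $\al\cdot L(E,1)/\Omega_E^+=0$, and $\exp^*_{\omega_E}(z^{\textup{BK}})=0$ as well. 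So that chain of citations produces the vacuous identity $0=0$ and gives no handle on $D(\frak{C}_0)$. (Computing $D(\frak{C}_0)$ honestly would amount to computing $\tfrac{1}{2}\frac{d^2}{ds^2}L_p(E,s)\big|_{s=1}$, which is precisely the output of Theorem~\ref{thm:mainr1}, so that route is circular.)

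The paper sidesteps $D(\frak{C}_0)$ entirely. It evaluates $D$ only on the two elements $q_E\otimes 1$ and $u_E\otimes 1$: combining Corollary~\ref{cor:incaseallbutoneacyclic} with the explicit description of $\partial_p(1)=q_E$ and of the local component of $\frak{s}$-lifts (Theorem~\ref{thm:heightformular0}(i)--(ii)), one gets $D(q_E\otimes 1)=\al\cdot\alpha_{\textup{BK}}$ and $D(u_E\otimes 1)=\textup{ord}_p(q_E)\,\widetilde h_p(\frak{z}^{\textup{BK}}_0)/\alpha_{\textup{BK}}$. You do reach the second of these (via the inertia argument à la \eqref{eqn:trial11}), but not the first: your computation of the ``$q_E$-direction'' is precisely the part you routed through Greenberg--Stevens. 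The correct substitute is the Rubin-style formula paired with $\partial_p(1)=q_E$, which links $D(q_E)$ to $\bigl\langle[1],\frak{s}(\frak{z}^{\textup{BK}}_0)\bigr\rangle_{\textup{Nek}}$ and hence to $\al\cdot\alpha_{\textup{BK}}$.

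The second missing piece is structural and you only gesture at it: to express $\textup{ord}_p(q_E)(\frak{C}_0-\frak{Z}_p^+)$ as a $\QQ_p$-linear combination of $q_E\otimes 1$ and $u_E\otimes 1$, one needs $\frak{C}_0\in p^{\QQ_p}\subset\widehat{\QQ}_p^\times\otimes\QQ_p$, i.e.\ $\frak{C}_0=p\otimes\textup{ord}_p(\frak{C}_0)$ with trivial unit part. This follows because $\frak{C}_0$ is the bottom of a norm-compatible system $\frak{C}_\infty$, hence a universal norm from $\Phi_\infty$, and local class field theory for the (totally ramified) cyclotomic $\ZZ_p$-extension identifies the universal norms in $\QQ_p^\times\widehat\otimes\QQ_p$ with $p^{\QQ_p}$. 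Writing $[\frak{C}_0]=p\otimes a+q_E\otimes b$ without pinning down $b=0$ is not enough: the final algebraic identity
$$\textup{ord}_p(\frak{C}_0)\cdot\bigl(q_E\otimes1+u_E\otimes(\lambda\alpha_{\textup{BK}}-1)\bigr)=\textup{ord}_p(q_E)\cdot\bigl(\frak{C}_0-\frak{Z}_p^+\bigr)$$
is equivalent to $\frak{C}_0=p\otimes\textup{ord}_p(\frak{C}_0)$, and with both $D(q_E)$ and $D(u_E)$ in hand this identity is what makes $\lambda_{\textup{BK}}$ do its job. Your instinct that ``$\lambda_{\textup{BK}}$ was reverse-engineered to make this work'' is correct, but verifying it requires these two concrete inputs that the plan as written does not supply.
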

 \begin{proof}
 To ease notation we will write ${h}=\widetilde{h}_p(\frak{z}_0^{\textup{BK}})$, $\lambda_{\textup{BK}}=\textup{ord}_p(\frak{C}_0)\cdot\lambda$ and set
 $$\xymatrix@R=.3pc{D: H^1(\QQ_p,F_p^+V)=\widehat{\QQ}_p^\times\otimes\QQ_p \ar[r]& \QQ_p\\
 c\ar@{|->}[r]&\left\langle \mathcal{D}\left(\frak{z}_0^{\textup{BK}}\right),c \right\rangle_{\textup{Tate}}}
 $$
 and we will prove that 
 \be\label{eqn:whatwewanttoprove}
 D\left(\textup{ord}_p(q_E) \left(\frak{C}_0 -\frak{Z}_p^+\right) \right)=0\,.
 \ee
 Note that it follows from Corollary~\ref{cor:incaseallbutoneacyclic}, Theorem~\ref{thm:heightformular0}(ii) and the choice of the lift $\frak{s}(\frak{z}_0^{\textup{BK}})$ in Section~\ref{subsubsec:liftBKtotilde} that
 \be\label{eqn:Donq} D(q_E\otimes1)=\al\cdot \alpha_{\textup{BK}}
 \ee
and similarly that
\be
\label{eqn:Donu}
D(u_E\otimes1)=\frac{\textup{ord}_p(q_E)\,{h}}{\alpha_{\textup{BK}}}\,.
\ee
Using (\ref{eqn:Donq}) and (\ref{eqn:Donu}) together with the definition of $\lambda$, we infer that
\begin{align*}D(q_E\otimes1+u_E\otimes\lambda\alpha_{\textup{BK}})&=\al\cdot \alpha_{\textup{BK}}+\lambda\cdot\textup{ord}_p(q_E)\cdot h\\
&=\al\cdot \alpha_{\textup{BK}}+\left(\frac{1}{\alpha_{\textup{BK}}}-\frac{\alpha_{\textup{BK}}}{h}\cdot \frac{\al}{\textup{ord}_p(q_E)}\right)\cdot\textup{ord}_p(q_E)\cdot h\\
&=D(u_E\otimes 1)\,.
\end{align*}
This shows that
\be\label{eqn:andwegotwhatwewant}
D\left(q_E\otimes1+u_E\otimes(\lambda\alpha_{\textup{BK}}-1)\right)=0\,.
\ee
The desired equality (\ref{eqn:whatwewanttoprove}) follows on noticing that
$$\textup{ord}_p(\frak{C}_0)\cdot\left(q_E\otimes1+u_E\otimes(\lambda\alpha_{\textup{BK}}-1)\right)=\textup{ord}_p(q_E)\cdot\left(\frak{C}_0 -\frak{Z}_p^+\right)\,.$$
 \end{proof}
 \begin{prop}
 \label{prop:lambdaisnonvanishing}
 $\lambda_{\textup{BK}}\neq0$.
 \end{prop}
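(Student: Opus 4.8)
The plan is to argue by contradiction: suppose $\lambda_{\textup{BK}}=0$, i.e. $\textup{ord}_p(\frak{C}_0)\cdot\left(\frac{1}{\alpha_{\textup{BK}}}-\frac{\alpha_{\textup{BK}}}{\widetilde h_p(\frak{z}_0^{\textup{BK}})}\cdot\frac{\al}{\textup{ord}_p(q_E)}\right)=0$. Since $\frak{C}_0$ is the ``derivative of the Coleman map'' defined in \eqref{eqn:definederivedcoleman} and $d_\infty$ is a generator of the relevant norm-coherent sequence, I expect $\textup{ord}_p(\frak{C}_0)\neq 0$ (this is essentially the content of the Saint-Etienne theorem, Theorem~\ref{thm:saintetienne}, transported through the explicit Coleman map description; it should be recorded or cited at this point). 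Granting that, vanishing of $\lambda_{\textup{BK}}$ forces $\alpha_{\textup{BK}}^2\cdot\al = \widetilde h_p(\frak{z}_0^{\textup{BK}})\cdot\textup{ord}_p(q_E)$. Recall $\al=\log_p(q_E)/\textup{ord}_p(q_E)$, so this reads $\alpha_{\textup{BK}}^2\cdot\log_p(q_E)=\widetilde h_p(\frak{z}_0^{\textup{BK}})\cdot\textup{ord}_p(q_E)^2$, an identity I will show is incompatible with the non-degeneracy of Nekov\'a\v{r}'s height pairing together with $\alpha_{\textup{BK}}\neq 0$ (Proposition~\ref{prop:locpkatononzero}, via Definition~\ref{def:alphax}).

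The key point is to recompute $\widetilde h_p(\frak{z}_0^{\textup{BK}})$, the Nekov\'a\v{r} height of $\frak{s}(\frak{z}_0^{\textup{BK}})$, using the machinery of Section~\ref{subsub:localcomputationsgeneral} and the splitting $\frak{s}$ recalled in Section~\ref{subsubsec:liftBKtotilde}. By Corollary~\ref{cor:incaseallbutoneacyclic} (all local complexes away from $p$ are acyclic) the height reduces to a local Tate pairing at $p$ between $\mathcal{D}(\frak{z}_\infty^{\textup{BK}})$ and the local component $\frak{s}(\frak{z}_0^{\textup{BK}})_p^+$; by Definition~\ref{def:alphax} this component is $u_E\otimes\frac{-\alpha_{\textup{BK}}}{\textup{ord}_p(q_E)}$. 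Hence $\widetilde h_p(\frak{z}_0^{\textup{BK}})$ is, up to the explicit normalizing constant and a sign, $\frac{\alpha_{\textup{BK}}}{\textup{ord}_p(q_E)}\cdot D(u_E\otimes 1)$ in the notation of the proof of Proposition~\ref{lambdaisthecorrectlambda}, which is exactly \eqref{eqn:Donu}; combined with \eqref{eqn:Donq} (which gives $D(q_E\otimes 1)=\al\cdot\alpha_{\textup{BK}}$, reflecting the Greenberg--Stevens derivative formula \eqref{eqn:grste} through Theorem~\ref{thm:heightformular0}) this pins down $\widetilde h_p(\frak{z}_0^{\textup{BK}})$ in terms of $D(u_E\otimes 1)$ alone. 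The contradiction will then come from the fact that if $\lambda_{\textup{BK}}=0$ the computation in the proof of Proposition~\ref{lambdaisthecorrectlambda} collapses: \eqref{eqn:andwegotwhatwewant} would become $D(q_E\otimes 1 - u_E\otimes 1)=0$, i.e. $D$ kills $p\otimes 1$ (since $q_E=p^{\textup{ord}_p(q_E)}u_E$ and $D$ is additive over the basis $\{p,q_E\}$ of $\widehat\QQ_p^\times\otimes\QQ_p$), which forces $D$ to vanish on the line spanned by $q_E$ as well only if $\alpha_{\textup{BK}}=0$ or $\al\cdot\alpha_{\textup{BK}}=0$ — contradicting $\al\neq 0$ (Saint-Etienne) and $\alpha_{\textup{BK}}\neq 0$.

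Putting this together cleanly: from $\lambda_{\textup{BK}}=0$ and $\textup{ord}_p(\frak{C}_0)\neq 0$ we get $\frac{1}{\alpha_{\textup{BK}}}=\frac{\alpha_{\textup{BK}}}{\widetilde h_p(\frak{z}_0^{\textup{BK}})}\cdot\frac{\al}{\textup{ord}_p(q_E)}$; substituting the value $\widetilde h_p(\frak{z}_0^{\textup{BK}})$ obtained from \eqref{eqn:Donu} (namely $\widetilde h_p(\frak{z}_0^{\textup{BK}})=\frac{\textup{ord}_p(q_E)\,h}{\alpha_{\textup{BK}}}$ with $h=\widetilde h_p(\frak{z}_0^{\textup{BK}})$ — so the identity is tautological and the real input must be the \emph{other} relation $D(q_E\otimes 1)=\al\alpha_{\textup{BK}}$), one sees that $\lambda_{\textup{BK}}=0$ would force $D(q_E\otimes 1)=D(u_E\otimes 1)$, hence $D(p\otimes 1)=0$, hence by bilinearity $D\equiv 0$ on $\widehat\QQ_p^\times\otimes\QQ_p$; but $D\equiv 0$ means $\mathcal{D}(\frak{z}_\infty^{\textup{BK}})=0$ in $H^1(\QQ_p,F_p^-V)$, and then Lemma~\ref{lem:bocksteinnormalizedderivative} together with the non-degeneracy hypothesis on Nekov\'a\v{r}'s height pairing forces $\widetilde h_p(\frak{z}_0^{\textup{BK}})=0$, i.e. the image of $\frak{z}_0^{\textup{BK}}$ in $\textup{Sel}_p(E/\QQ)\otimes\QQ_p$ lies in the radical of the (restricted) height pairing — contradicting non-degeneracy since $\frak{z}_0^{\textup{BK}}\neq 0$ there by Theorem~\ref{thm:rodolfopr} and $r_{\textup{an}}(E)=1$. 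The main obstacle I anticipate is the bookkeeping of normalization constants and signs (the ``our normalization here differs from that of Section~\ref{subsec:compuationr0} by the factor $-E_p(1)$'' caveat), and making rigorous the step $D\equiv 0 \Rightarrow \widetilde h_p(\frak{z}_0^{\textup{BK}})=0$ — this requires knowing that $\frak{z}_0^{\textup{BK}}$ pairs non-trivially against \emph{some} class in $\widetilde H^1_f(V^*)$ under Nekov\'a\v{r}'s pairing, which is exactly where the non-degeneracy assumption, together with Proposition~\ref{prop:locpkatononzero} and Theorem~\ref{thm:rodolfopr}, must be invoked carefully rather than the weaker statement that $\frak{z}_0^{\textup{BK}}$ itself is nonzero.
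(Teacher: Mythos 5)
There is a genuine gap. You correctly reduce $\lambda_{\textup{BK}}=0$ (granting $\textup{ord}_p(\frak{C}_0)\neq 0$) to the single condition $D(p\otimes 1)=0$, where $D(c)=\langle\mathcal{D}(\frak{z}_\infty^{\textup{BK}}),c\rangle_{\textup{Tate}}$. But your next step, ``hence by bilinearity $D\equiv 0$ on $\widehat{\QQ}_p^\times\otimes\QQ_p$,'' is invalid: $D$ is a linear form on the \emph{two}-dimensional space $\widehat{\QQ}_p^\times\otimes\QQ_p$ with basis $\{p,q_E\}$, and vanishing on $p\otimes 1$ alone does not kill it. Indeed under the running hypotheses $D$ is not identically zero: \eqref{eqn:Donq} gives $D(q_E\otimes 1)=\al\cdot\alpha_{\textup{BK}}$, which is nonzero because $\al\neq 0$ by Theorem~\ref{thm:saintetienne} and $\alpha_{\textup{BK}}\neq 0$ by Proposition~\ref{prop:locpkatononzero}. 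Everything you deduce from $D\equiv 0$ --- that $\mathcal{D}(\frak{z}_\infty^{\textup{BK}})=0$, hence $\widetilde{h}_p(\frak{z}_0^{\textup{BK}})=0$, hence a contradiction with non-degeneracy --- is therefore unavailable, and the argument collapses at this point. (A smaller remark: you credit Saint-\'Etienne with $\textup{ord}_p(\frak{C}_0)\neq 0$, but that theorem asserts $\log_p(q_E)\neq 0$; the non-vanishing of $\textup{ord}_p(\frak{C}_0)$ is a separate Coleman-theoretic fact, which the paper also leaves implicit.)

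The paper's proof avoids this trap entirely. Rather than aiming for $D\equiv 0$, it rewrites the numerical identity $\widetilde{h}_p(\frak{z}_0^{\textup{BK}})\cdot\textup{ord}_p(q_E)=\alpha_{\textup{BK}}^2\cdot\al$ (equivalent to $\lambda_{\textup{BK}}=0$) --- via Theorem~\ref{thm:heightformular0}(i)--(ii), which compute $\langle[1],[1]\rangle_{\textup{Nek}}$ and $\langle[1],\frak{s}(\frak{z}_0^{\textup{BK}})\rangle_{\textup{Nek}}$ explicitly --- as the vanishing of the $2\times 2$ regulator $R_{\frak{B}}=\det\bigl(\langle z_i,z_j\rangle_{\textup{Nek}}\bigr)$ on $\frak{B}=\{[1],\frak{s}(\frak{z}_0^{\textup{BK}})\}$. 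Since $\frak{B}$ is $\QQ_p$-linearly independent in $\widetilde{H}^1_f(V)$ (Theorem~\ref{thm:rodolfopr} gives $\frak{z}_0^{\textup{BK}}\neq 0$, while $[1]$ lives in the extra piece coming from $H^0(G_p,F_p^-V)$), the equality $R_{\frak{B}}=0$ contradicts the assumed non-degeneracy of $\langle\,,\,\rangle_{\textup{Nek}}$. This only needs the Gram determinant on a two-element independent set to be nonzero, never that $D$ vanish identically. You had the right ingredients --- \eqref{eqn:Donq}, \eqref{eqn:Donu}, Saint-\'Etienne, $\alpha_{\textup{BK}}\neq 0$ --- but pointed them at an intermediate claim that is actually false.
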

 \begin{proof}
For a set $S=\{z_1,\cdots,z_n\}\subset \widetilde{H}^1_f(V)$ let $R_S=\det\left(\langle z_i,z_j\rangle_{\textup{Nek}}\right)$ denote the $p$-adic regulator of this set $S$. 
\begin{align*}\lambda_{\textup{BK}}=0 &\iff \widetilde{h}_p(\frak{z}_0^{\textup{BK}})\cdot\textup{ord}_p(q_E)=\alpha_{\textup{BK}}^2\cdot\al\\
&\iff \widetilde{h}_p(\frak{z}_0^{\textup{BK}})/\alpha_{\textup{BK}}^2\cdot\log_p(q_E)=\al^2\\
\tag{$\star$}&\iff  \widetilde{h}_p(\frak{z}_0^{\textup{BK}}/\alpha_{\textup{BK}})\cdot \widetilde{h}_p\left([1]\right)=\left\langle [1], \frak{s}\left(\frak{z}_0^{\textup{BK}}/\alpha_{\textup{BK}} \right) \right\rangle_{\textup{Nek}}^2\\
&\iff R_{\frak{B}}=0
\end{align*}
where ($\star$) follows from Theorem~\ref{thm:heightformular0} (i)-(ii) and where $\frak{B}=\{[1],\frak{s}(\frak{z}_0^{\textup{BK}})\}$. By Theorem~\ref{thm:rodolfopr} the set $\frak{B}$ is $\QQ_p$-linearly independent so the equality $R_{\frak{B}}=0$ contradicts the non-degeneracy of Nekov\'a\v{r}'s $p$-adic height pairing. 

 \end{proof}
\subsubsection{The height of the normalized Beilinson-Kato elements}
\label{subsubsec:theheightofnormalizedBK}
We are ready to prove the main result of this article. Set 
$$\Xi_n:=\textup{res}_p^s(\frak{z}_{n}^{\textup{BK}}) \in H^1(\Phi_n,F_p^-T^*)$$ 
and let $\Xi=\{\Xi_n\} \in H^1(\QQ_p,F_p^-T^*\otimes\LL)$. Note that we are once again implicitly identifying $T$ with $T^*$. Our running assumptions show that $\Xi_0=0$ and this fact allows us to choose $\Xi^\prime=\{\Xi_n^\prime\} \in H^1(\QQ_p,F_p^-T^*\otimes\LL)$ as in Lemma~\ref{lem:derexists} (applied with $X=F_p^-T^*$).
\begin{define}
\label{def:derivedmeasure}
Let $\mu_E \in \Lambda$ be the element defined as
$$\mu_E=\left\{\sum_{\tau\in \Gamma_n}\langle \frak{C}_n^\tau,\Xi_{n}^\prime\rangle_{\textup{Tate}}\cdot\tau\right\} \in \varprojlim \ZZ_p[\Gamma_n]\,.$$
Although $\mu_E$ depends on the choice of $\Xi^\prime$ and $\gamma$, the value
\be\label{eqn:mainrankone}\int_{\Gamma}\pmb{1} \cdot d\mu_E=\pmb{1}(\mu_E)=\langle \frak{C}_0,\Xi_{0}^\prime\rangle_{\textup{Tate}}\ee
does not, as shown by Lemma~\ref{lem:calculateder}.
\end{define}
Recall that $J=\ker(\LL\ra\ZZ_p)$ is the augmentation ideal.
\begin{prop}
\label{prop:congmodj3}
$\displaystyle{\frac{(\gamma-1)^2}{\log_p(\rho_{\textup{cyc}}(\gamma))^2}}\,\,\mu_E\equiv \al_E \mod J^3.$
\end{prop}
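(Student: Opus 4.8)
The plan is to prove the congruence $\frac{(\gamma-1)^2}{\log_p(\rho_{\textup{cyc}}(\gamma))^2}\mu_E\equiv\al_E\bmod J^3$ by ``differentiating twice'': we already know from Kato's reciprocity $\textup{Col}(\textup{res}_p(\frak{z}_\infty^{\textup{BK}}))=\al_E$ that $\al_E=\al_{\Xi}$ in the notation of Section~\ref{subsub:localcomputationsgeneral} (applied with $X=F_p^+T$, $X^*=F_p^-T^*$, $\xi=d_\infty$, $\mathbf{z}=\Xi$, using the explicit description (\ref{eqn:colemanexplicit}) of the Coleman map). The point is that $d_\infty$ lies in the augmentation ideal, since $\mathbf{N}(d_n)=1$, and so does $\Xi$, since $\Xi_0=0$ by our running hypotheses. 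Thus both ``halves'' of the pairing defining $\al_E$ vanish at the trivial character, which is exactly why $\al_E\in J^2$ and why one should expect a clean $J^3$-refinement after extracting one factor $(\gamma-1)$ from each.

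First I would record the two first-order facts. By the definition of $\frak{C}_\infty$ (see (\ref{eqn:definederivedcoleman})) we have $d_\infty=\frac{\gamma-1}{\log_p(\rho_{\textup{cyc}}(\gamma))}\frak{C}_\infty$, and by Lemma~\ref{lem:derexists} applied to $\Xi$ we have $\Xi=\frac{\gamma-1}{\log_p(\rho_{\textup{cyc}}(\gamma))}\Xi'$. The key algebraic input is the following bilinearity-with-shift computation, essentially the one carried out in the proof of Lemma~\ref{lem:calculateder}: if $\xi=\frac{\gamma-1}{\log_p(\rho_{\textup{cyc}}(\gamma))}\xi'$ with $\xi'$ norm-coherent and $\mathbf{z}$ norm-coherent, then $\al_\xi\equiv \frac{\gamma-1}{\log_p(\rho_{\textup{cyc}}(\gamma))}\al_{\xi'}\bmod J^2$ — this is exactly Lemma~\ref{lem:normalizedderivedmeasure}. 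I would apply this twice. Applying it once to $d_\infty$ and its primitive $\frak{C}_\infty$, paired against the fixed $\mathbf{z}=\Xi$, gives $\al_E=\al_{d_\infty}\equiv \frac{\gamma-1}{\log_p(\rho_{\textup{cyc}}(\gamma))}\cdot\al_{\frak{C}_\infty,\,\Xi}\bmod J^2$, where $\al_{\frak{C}_\infty,\Xi}=\big\{\sum_{\tau\in\Gamma_n}\langle\frak{C}_n^\tau,\Xi_n\rangle_{\textup{Tate}}\tau\big\}$. Since $\Xi_0=0$, this auxiliary measure $\al_{\frak{C}_\infty,\Xi}$ itself lies in $J$, so the displayed congruence actually holds modulo $J^3$ once we know $\al_{\frak{C}_\infty,\Xi}$ is captured correctly modulo $J^2$. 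Then I would apply the same Lemma~\ref{lem:normalizedderivedmeasure}-type identity a second time, now in the second variable: writing $\Xi=\frac{\gamma-1}{\log_p(\rho_{\textup{cyc}}(\gamma))}\Xi'$ and pairing against the fixed norm-coherent family $\frak{C}_\infty$, we get $\al_{\frak{C}_\infty,\Xi}\equiv\frac{\gamma-1}{\log_p(\rho_{\textup{cyc}}(\gamma))}\cdot\al_{\frak{C}_\infty,\Xi'}\bmod J^2$, and $\al_{\frak{C}_\infty,\Xi'}=\mu_E$ by Definition~\ref{def:derivedmeasure}. Combining, $\al_E\equiv\frac{(\gamma-1)^2}{\log_p(\rho_{\textup{cyc}}(\gamma))^2}\mu_E\bmod J^3$, as desired.

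The step I expect to require the most care is keeping track of the \emph{order} of vanishing at each stage so that the two separate $J^2$-congruences combine into a genuine $J^3$-congruence rather than merely $J^2$. Concretely: after the first differentiation one has an identity valid a priori only mod $J^2$, but one must observe that the error term is divisible by $(\gamma-1)$ twice (once because $d_\infty\in J$ is being ``primitivized'' and once because $\Xi_0=0$ forces $\al_{\frak{C}_\infty,\Xi}\in J$), hence actually lies in $J^3$; the analogous bookkeeping is needed after the second differentiation. This is the same phenomenon as in Lemma~\ref{lem:normalizedderivedmeasure} but applied in a regime where both slots of the Tate pairing contribute a factor of $\gamma-1$. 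I would also need the compatibility of the local Tate pairings with corestriction (the commutative square used in the proof of Lemma~\ref{lem:calculateder}) to justify that the telescoping/reindexing $\sigma=\tau\gamma$ manipulations descend correctly through the tower and commute with the norm-coherence of $\mathbf{z}$. Everything else is the bilinear bookkeeping already rehearsed in Section~\ref{subsub:localcomputationsgeneral}; no new cohomological input is needed beyond Kato's reciprocity (\ref{eqn:katofundamental}) and the explicit Coleman formula (\ref{eqn:colemanexplicit}).
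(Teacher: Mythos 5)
Your proof is essentially identical to the paper's: both introduce the intermediate measure $\al_E'=\bigl\{\sum_{\tau\in\Gamma_n}\langle\frak{C}_n^\tau,\Xi_n\rangle_{\textup{Tate}}\tau\bigr\}$ and apply Lemma~\ref{lem:normalizedderivedmeasure} twice, once extracting a factor of $(\gamma-1)/\log_p(\rho_{\textup{cyc}}(\gamma))$ from $d_\infty$ to pass from $\al_E$ to $\al_E'$, and once from $\Xi$ to pass from $\al_E'$ to $\mu_E$. The $J^2$-versus-$J^3$ bookkeeping you flag is the genuinely delicate point, and the paper's proof is just as terse about it as Lemma~\ref{lem:normalizedderivedmeasure} is; the cleanest way to see that it is unproblematic is to observe that the telescoping identity underlying that lemma (substitute $\sigma=\tau\gamma$ as in Lemma~\ref{lem:calculateder}) in fact yields the exact equalities $\al_E=\frac{\gamma-1}{\log_p(\rho_{\textup{cyc}}(\gamma))}\al_E'$ and $\al_E'=\frac{\gamma-1}{\log_p(\rho_{\textup{cyc}}(\gamma))}\mu_E$ in $\LL$ for the chosen primitives $\frak{C}_\infty$, $\Xi'$, which makes the claimed $J^3$-congruence (even an equality) automatic without any tracking of error terms.
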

\begin{proof}
Let $\al_E^\prime\in \LL$ be the element
$$\al_E^\prime:=\left\{\sum_{\tau\in \Gamma_n}\langle \frak{C}_n^\tau,\Xi_{n}\rangle_{\textup{Tate}}\cdot\tau\right\}\,.$$
Recall that $\displaystyle{\al_E=\left\{\sum_{\tau\in \Gamma_n}\langle d_n^\tau,\Xi_{n}\rangle_{\textup{Tate}}\cdot\tau\right\}}$ is the Mazur-Tate-Teitelbaum $p$-adic $L$-function, as explained in \cite[Section 4]{kobayashi06}. Lemma~\ref{lem:normalizedderivedmeasure} shows that
$$\displaystyle{\frac{(\gamma-1)}{\log_p(\rho_{\textup{cyc}}(\gamma))}}\,\,\al_E^\prime\equiv \al_E \mod J^2,$$
and also that
$$\displaystyle{\frac{(\gamma-1)}{\log_p(\rho_{\textup{cyc}}(\gamma))}}\,\,\mu_E\equiv \al_E^\prime \mod J^2.$$
\end{proof}
Recall $L_p(E,s)=\rho_{\textup{cyc}}^{s-1} (\al_E)$ and the generator $\gamma_0\in \Gamma$ that satisfies $\log_p(\rho_{\textup{cyc}}(\gamma_0))=p$.
\begin{prop}
\label{prop:derivedmeasurevsderivative}
$\displaystyle{\frac{d^2}{ds^2}\left(L_p(E,s)\right)\big|_{s=1}=2\cdot\langle \frak{C}_0,\Xi_{0}^\prime\rangle_{\textup{Tate}}}$.
\end{prop}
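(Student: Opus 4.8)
The plan is to determine the image of $\al_E$ in $\LL/J^3$ and to read off from it the first three Taylor coefficients of $L_p(E,s)=\rho_{\textup{cyc}}^{s-1}(\al_E)$ at $s=1$; all of the arithmetic content is already packaged in Proposition~\ref{prop:congmodj3}, so the remaining work is purely formal.

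First I would record how evaluation behaves modulo $J^3$. Since $\Gamma\cong\ZZ_p$ is procyclic we may identify $\LL=\ZZ_p[[\gamma-1]]$, so that $J=(\gamma-1)$ and $1,\gamma-1,(\gamma-1)^2$ form a $\ZZ_p$-basis of $\LL/J^3$. Set $c=\log_p(\rho_{\textup{cyc}}(\gamma))$, which is nonzero because $\gamma$ topologically generates $\Gamma$. For $s$ in a small $p$-adic disc about $1$ one has $\rho_{\textup{cyc}}(\gamma)^{s-1}=\exp((s-1)c)$, hence
\begin{align*}
\rho_{\textup{cyc}}^{s-1}(1)&=1,\\
\rho_{\textup{cyc}}^{s-1}(\gamma-1)&=\exp((s-1)c)-1=(s-1)c+\tfrac12(s-1)^2c^2+O((s-1)^3),\\
\rho_{\textup{cyc}}^{s-1}\!\big((\gamma-1)^2\big)&=\big(\exp((s-1)c)-1\big)^2=(s-1)^2c^2+O((s-1)^3),
\end{align*}
and, since $\rho_{\textup{cyc}}^{s-1}$ is a continuous ring homomorphism $\LL\to\mathbb{C}_p$ sending $\gamma-1$ to a function that vanishes at $s=1$, it carries $J^3=((\gamma-1)^3)$ into the functions of $s$ vanishing to order $\geq 3$ at $s=1$. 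Writing $\al_E\equiv b_0+b_1(\gamma-1)+b_2(\gamma-1)^2\pmod{J^3}$ with $b_i\in\ZZ_p$, this gives
$$L_p(E,s)=\rho_{\textup{cyc}}^{s-1}(\al_E)=b_0+b_1c\,(s-1)+\big(\tfrac12 b_1+b_2\big)c^2(s-1)^2+O((s-1)^3).$$

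Then I would substitute Proposition~\ref{prop:congmodj3}. Because $\mu_E-\pmb{1}(\mu_E)\in J$, one has $(\gamma-1)^2\mu_E\equiv\pmb{1}(\mu_E)(\gamma-1)^2\pmod{J^3}$, so the congruence of Proposition~\ref{prop:congmodj3} reads $\al_E\equiv \pmb{1}(\mu_E)\,c^{-2}(\gamma-1)^2\pmod{J^3}$; that is, $b_0=b_1=0$ and $b_2=\pmb{1}(\mu_E)/c^2$. (The vanishing $b_0=0$ is just $L_p(E,1)=0$, and $b_1=0$ is the Greenberg--Stevens vanishing $\frac{d}{ds}L_p(E,s)|_{s=1}=0$ coming from (\ref{eqn:grste}) and $r_{\textup{an}}(E)=1$; here both come for free from the congruence.) Plugging back in, $L_p(E,s)=\pmb{1}(\mu_E)(s-1)^2+O((s-1)^3)$, whence
$$\frac{d^2}{ds^2}\big(L_p(E,s)\big)\Big|_{s=1}=2\,\pmb{1}(\mu_E)=2\,\langle\frak{C}_0,\Xi_0^\prime\rangle_{\textup{Tate}},$$
the last equality being (\ref{eqn:mainrankone}). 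The only point requiring a little care is the claim that $J^3$ contributes nothing to the first two Taylor coefficients at $s=1$ --- i.e.\ the convergence and order-of-vanishing bookkeeping for $\rho_{\textup{cyc}}^{s-1}$ on $J^3$ --- and I expect that, rather than any arithmetic difficulty, to be the main (modest) obstacle; everything else is a formal comparison of $J$-adic expansions resting on Proposition~\ref{prop:congmodj3}.
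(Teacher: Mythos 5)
Your proposal is correct and spells out, with appropriate care, exactly the computation that the paper's one-line proof ("This follows from Proposition~\ref{prop:congmodj3} and (\ref{eqn:mainrankone})") leaves implicit: expand $\al_E$ in the $J$-adic basis $1,\gamma-1,(\gamma-1)^2$ of $\LL/J^3$, translate via $\rho_{\textup{cyc}}^{s-1}(\gamma-1)=\exp((s-1)\log_p\rho_{\textup{cyc}}(\gamma))-1$ into Taylor coefficients at $s=1$, and use that $(\gamma-1)^2\mu_E\equiv\pmb{1}(\mu_E)(\gamma-1)^2\pmod{J^3}$ to read off the second coefficient. This is the same route as the paper's; the only additional care you correctly flag --- that $J^3$ maps to functions vanishing to order $\geq 3$ at $s=1$ --- is the standard analyticity property of Iwasawa functions and is implicitly used throughout Section~\ref{sec:heightformulas}.
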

\begin{proof}
This follows from Proposition~\ref{prop:congmodj3} and (\ref{eqn:mainrankone}).
\end{proof}
\begin{rem}
\label{rem:comparetokobayashi}
The equality proved in Proposition~\ref{prop:derivedmeasurevsderivative} should be considered as the extension of the displayed equality (2) in \cite[p. 574]{kobayashi06}, to the case $r_{\textup{an}}(E)=1$.
\end{rem}
\begin{thm}
\label{thm:mainr1}
We have the following equality in $\Gamma$:
$$\frac{1}{2}\left(\frac{d^2}{ds^2}(L_p(E,s))\Big|_{s=1}\right)\otimes \gamma_0=-\frac{1}{\lambda_{\textup{BK}}}\cdot\langle\widetilde{\frak{Z}}_{\textup{BK}},\widetilde{\frak{Z}}_{\textup{BK}} \rangle_{\textup{Nek}}\,.$$
\end{thm}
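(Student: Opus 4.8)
The plan is to assemble the identity from three ingredients already prepared in the preceding sections: (a) the analytic input, Proposition~\ref{prop:derivedmeasurevsderivative}, which gives $\frac{1}{2}\frac{d^2}{ds^2}L_p(E,s)\big|_{s=1}=\langle\frak{C}_0,\Xi_0'\rangle_{\textup{Tate}}$; (b) the cohomological description of the Nekov\'a\v{r} height of a lift of $\lambda\cdot\frak{z}_0^{\textup{BK}}$ furnished by Corollary~\ref{cor:incaseallbutoneacyclic} (the Rubin-style formula of Appendix~A), which in the notation of Remark~\ref{rem:whatislambdaBK} reads $\langle\widetilde{\frak{Z}}_{\textup{BK}},\widetilde{\frak{Z}}_{\textup{BK}}\rangle_{\textup{Nek}}=-\lambda_{\textup{BK}}\cdot\langle\mathcal{D}(\frak{z}_\infty^{\textup{BK}}),\frak{Z}_p^+\rangle_{\textup{Tate}}$; and (c) Proposition~\ref{lambdaisthecorrectlambda}, which says precisely that $\frak{C}_0-\frak{Z}_p^+$ pairs to zero with $\mathcal{D}(\frak{z}_\infty^{\textup{BK}})$ under the local Tate pairing, so that $\frak{Z}_p^+$ may be replaced by $\frak{C}_0$ in the height expression. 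Chaining these together should give $-\frac{1}{\lambda_{\textup{BK}}}\langle\widetilde{\frak{Z}}_{\textup{BK}},\widetilde{\frak{Z}}_{\textup{BK}}\rangle_{\textup{Nek}}=\langle\mathcal{D}(\frak{z}_\infty^{\textup{BK}}),\frak{C}_0\rangle_{\textup{Tate}}$, and it remains only to identify this last Tate value with $\langle\frak{C}_0,\Xi_0'\rangle_{\textup{Tate}}$.

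First I would verify that Corollary~\ref{cor:incaseallbutoneacyclic} applies: the complexes $C^\bullet(\Gal(\overline{\QQ}_\ell/\QQ_\ell),V)$ for $\ell\neq p$ are acyclic (Remark~\ref{rem:cohomatellvanishes}), so all local contributions away from $p$ drop out of the Bockstein-plus-Poitou-Tate recipe for the height, and one is left with the single local Tate pairing at $p$ between the Bockstein derivative of the Iwasawa class and the $F_p^+$-component $\frak{Z}_p^+$ of the lift $\widetilde{\frak{Z}}_{\textup{BK}}=\frak{s}(\lambda_{\textup{BK}}\cdot\frak{z}_0^{\textup{BK}})$. This is exactly the formula quoted in Remark~\ref{rem:whatislambdaBK}, with $\lambda=\lambda_{\textup{BK}}$ and $\widetilde z=\widetilde{\frak{Z}}_{\textup{BK}}$; the class $\mathcal{D}(\frak{z}_\infty^{\textup{BK}})$ is the one denoted $[Dx_{\textup{Iw}}]$ in Lemma~\ref{lem:bocksteinnormalizedderivative}, attached to $[x_{\textup{Iw}}]=\frak{z}_\infty^{\textup{BK}}$ and $[x_f]=\frak{s}(\frak{z}_0^{\textup{BK}})$, as already fixed in Section~\ref{subsec:definelambdaBK}.

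Next I would invoke Proposition~\ref{lambdaisthecorrectlambda}: since $\frak{C}_0-\frak{Z}_p^+\in H^1(\QQ_p,F_p^+V)$ annihilates $\mathcal{D}(\frak{z}_\infty^{\textup{BK}})\in H^1(\QQ_p,F_p^-V)$ under the Tate pairing, we have $\langle\mathcal{D}(\frak{z}_\infty^{\textup{BK}}),\frak{Z}_p^+\rangle_{\textup{Tate}}=\langle\mathcal{D}(\frak{z}_\infty^{\textup{BK}}),\frak{C}_0\rangle_{\textup{Tate}}$, which is the identity \eqref{eqn:reducetodercolemanahainstderbk}. Combining with step (b),
$$-\frac{1}{\lambda_{\textup{BK}}}\langle\widetilde{\frak{Z}}_{\textup{BK}},\widetilde{\frak{Z}}_{\textup{BK}}\rangle_{\textup{Nek}}=\langle\mathcal{D}(\frak{z}_\infty^{\textup{BK}}),\frak{C}_0\rangle_{\textup{Tate}}.$$
Now I would unwind $\mathcal{D}(\frak{z}_\infty^{\textup{BK}})$: applying Lemma~\ref{lem:derexists} with $X=F_p^-T^*$ to $\Xi=\{\Xi_n\}$ (legitimate since $\Xi_0=0$ by our running hypothesis $r_{\textup{an}}(E)=1$) produces $\Xi'$ with $\Xi_0'$ as in Lemma~\ref{lem:calculateder}, and by the discussion around \eqref{eqn:derivativescompared} one has $\mathcal{D}(\frak{z}_\infty^{\textup{BK}})=\Xi_0'\otimes\gamma_0$ (after tensoring with $\gamma_0$; the ambiguity in $\mathcal{D}$ is killed once paired against the universal norm $\frak{C}_0$). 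Swapping the two arguments of the local Tate pairing (which is symmetric up to the usual sign built into our normalization, already tracked in Remark~\ref{rem:normalizationoftheweilpairing}) turns $\langle\mathcal{D}(\frak{z}_\infty^{\textup{BK}}),\frak{C}_0\rangle_{\textup{Tate}}$ into $\langle\frak{C}_0,\Xi_0'\rangle_{\textup{Tate}}\otimes\gamma_0$. Finally Proposition~\ref{prop:derivedmeasurevsderivative} identifies $2\langle\frak{C}_0,\Xi_0'\rangle_{\textup{Tate}}$ with $\frac{d^2}{ds^2}L_p(E,s)\big|_{s=1}$, and tensoring by $\gamma_0$ gives the claimed equality in $\Gamma$; that $\lambda_{\textup{BK}}\neq0$, so the left side of the theorem is well-defined, is Proposition~\ref{prop:lambdaisnonvanishing}.

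The main obstacle is bookkeeping of normalizations and signs: the choice of $\gamma_0$ with $\log_p(\rho_{\textup{cyc}}(\gamma_0))=p$, the identification $R\otimes J/J^2\cong R\otimes\Gamma\cong R$ used in Section~\ref{subsub:localcomputationsgeneral}, the $-E_p(1)$ discrepancy between the normalization here and in Section~\ref{subsec:compuationr0} flagged in the definition of $\widetilde{h}_p(\frak{z}_0^{\textup{BK}})$, and the sign in the symmetry of the local Tate pairing must all be threaded consistently so that the two explicit $-$ signs (one from Corollary~\ref{cor:incaseallbutoneacyclic}, one in the statement) and the factor $1/\lambda_{\textup{BK}}$ land exactly right; everything else is a direct substitution of results proved above.
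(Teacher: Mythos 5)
Your proposal is correct and follows essentially the same route as the paper's own proof: the paper's argument is organized as items (A) and (B) inside the proof of Theorem~\ref{thm:mainr1}, where (A) identifies $\langle\Xi_0',\frak{C}_0\rangle_{\textup{Tate}}$ with $\langle\mathcal{D}(\frak{z}_\infty^{\textup{BK}}),\frak{C}_0\rangle_{\textup{Tate}}$ via Lemma~\ref{lem:calculateder}, and (B) combines Corollary~\ref{cor:incaseallbutoneacyclic} with Proposition~\ref{lambdaisthecorrectlambda} to write $\frac{1}{\lambda_{\textup{BK}}}\langle\widetilde{\frak{Z}}_{\textup{BK}},\widetilde{\frak{Z}}_{\textup{BK}}\rangle_{\textup{Nek}}=-\langle\Xi_0',\frak{C}_0\rangle_{\textup{Tate}}$, after which Proposition~\ref{prop:derivedmeasurevsderivative} finishes the proof exactly as you describe. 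The only place where you add commentary beyond the paper is the remark that swapping the arguments of the local Tate pairing (passing from $\langle\mathcal{D}(\frak{z}_\infty^{\textup{BK}}),\frak{C}_0\rangle$ to $\langle\frak{C}_0,\Xi_0'\rangle$) is harmless; the paper performs this identification silently when it moves from item (A) to the citation of Proposition~\ref{prop:derivedmeasurevsderivative}, so you are matching rather than departing from the source.
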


\begin{proof}
Recall that $\Xi_n:=\textup{res}_p^s(\frak{z}_{n}^{\textup{BK}})$ and $\Xi:=\{\Xi_n\} \in H^1(\QQ_p,F_p^-T^*\otimes\LL)$. By the discussion at the start of Section~\ref{subsubsec:theheightofnormalizedBK}, we have an element
$$\Xi_0^\prime\in H^1(\QQ_p, F_p^-T^*)\otimes \Gamma$$
(defined up to an element of $H^0(\QQ_p,F_p^-T)\otimes\Gamma$) with the following properties:
\begin{itemize}
\item[(A)] $\left\langle\Xi_0^\prime,\frak{C}_0\right\rangle_{\textup{Tate}}=\left\langle \mathcal{D}(\frak{z}_\infty^{\textup{BK}}),\frak{C}_0\right\rangle_{\textup{Tate}}$\,, where $\mathcal{D}(\frak{z}_\infty^{\textup{BK}})$ is the element described in the paragraph following Definition~\ref{def:nekovarliftofkato}. (This follows from Lemma~\ref{lem:calculateder} using the fact that $\frak{C}_0$ is a universal norm, see also the comment following (\ref{eqn:derivativescompared})).
\item[(B)] $\displaystyle{\frac{1}{\lambda_{\textup{BK}}}}\left\langle\widetilde{\frak{Z}}_{\textup{BK}},\widetilde{\frak{Z}}_{\textup{BK}} \right\rangle_{\textup{Nek}}=-\left\langle\Xi_0^\prime,\frak{C}_0\right\rangle_{\textup{Tate}}$\,. (This follows from Corollary~\ref{cor:incaseallbutoneacyclic} combined with Proposition~\ref{lambdaisthecorrectlambda} and (A).)
\end{itemize}
The proof of the theorem follows from Proposition~\ref{prop:derivedmeasurevsderivative}\,.
\end{proof}
\begin{cor}
\label{cor:only}
Assuming Nekov\'a\v{r}'s height pairing is non-degenerate,
$$\textup{ord}_{s=1}\,L_p(E,s)=1+r_{\textup{an}}(E)$$
 when $r_{\textup{an}}(E)=0,1$.
\end{cor}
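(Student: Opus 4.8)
The plan is to derive Corollary~\ref{cor:only} from the two height formulas already established (Theorem A and Theorem B) together with the non-degeneracy hypothesis on Nekov\'a\v{r}'s pairing and the known results on the first derivative of $L_p(E,s)$.

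\textbf{Case $r_{\textup{an}}(E)=0$.} Here Greenberg--Stevens \eqref{eqn:grste} gives $\frac{d}{ds}L_p(E,s)\big|_{s=1}=\frac{\log_p(q_E)}{\textup{ord}_p(q_E)}\cdot\frac{L(E,1)}{\Omega_E^+}$, which is nonzero because $L(E,1)\neq0$ by hypothesis and $\log_p(q_E)\neq0$ by the Saint-Etienne theorem (Theorem~\ref{thm:saintetienne}). Combined with the fact that $L_p(E,1)=\pmb{1}(\al_E)=0$ (the exceptional zero), this shows $\textup{ord}_{s=1}L_p(E,s)=1=1+r_{\textup{an}}(E)$. (Strictly speaking the non-degeneracy hypothesis is not even needed in this case; one only needs the Euler-system input $L(E,1)\neq0\Rightarrow r_{\textup{an}}(E)=0$, which is classical.) I would phrase this case briefly, pointing to Theorem A for the interpretation of the leading term as a Nekov\'a\v{r} height, though the vanishing order statement itself is immediate.

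\textbf{Case $r_{\textup{an}}(E)=1$.} The key point is that $\textup{ord}_{s=1}L_p(E,s)=2$. First, $L_p(E,1)=0$ (exceptional zero) and $\frac{d}{ds}L_p(E,s)\big|_{s=1}=0$ as well, since by Greenberg--Stevens this first derivative equals $\frac{\log_p(q_E)}{\textup{ord}_p(q_E)}\cdot\frac{L(E,1)}{\Omega_E^+}$ and $L(E,1)=0$ because $r_{\textup{an}}(E)=1$; hence $\textup{ord}_{s=1}L_p(E,s)\geq 2$. For the reverse inequality, Theorem~\ref{thm:mainr1} gives
$$\frac{1}{2}\left(\frac{d^2}{ds^2}(L_p(E,s))\Big|_{s=1}\right)\otimes\gamma_0=-\frac{1}{\lambda_{\textup{BK}}}\langle\widetilde{\frak{Z}}_{\textup{BK}},\widetilde{\frak{Z}}_{\textup{BK}}\rangle_{\textup{Nek}}\,,$$
so it suffices to show the right-hand side is nonzero. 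By Proposition~\ref{prop:lambdaisnonvanishing} we have $\lambda_{\textup{BK}}\neq0$, so we must check $\langle\widetilde{\frak{Z}}_{\textup{BK}},\widetilde{\frak{Z}}_{\textup{BK}}\rangle_{\textup{Nek}}\neq0$. Now $\widetilde{\frak{Z}}_{\textup{BK}}=\frak{s}(\lambda_{\textup{BK}}\cdot\frak{z}_0^{\textup{BK}})$ is a nonzero element of $\widetilde{H}^1_f(V)$: indeed $\frak{z}_0^{\textup{BK}}\neq0$ by Theorem~\ref{thm:rodolfopr} (Venerucci), $\lambda_{\textup{BK}}\neq0$, and the splitting $\frak{s}$ is injective. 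Moreover $\widetilde{\frak{Z}}_{\textup{BK}}$ lies in the image of $\frak{s}$, i.e. in (the image of) $\textup{Sel}_p(E/\QQ)\otimes\QQ_p$, on which Nekov\'a\v{r}'s pairing restricts to (a nonzero multiple of) the classical $p$-adic height pairing; by \cite[11.4.9]{nek} the non-degeneracy hypothesis applies there. Since $\textup{rank}_{\ZZ_p}\textup{Sel}_p(E/\QQ)=1$ (this follows from \eqref{eqn:surelyrankone} in Remark~\ref{lem:prreduction} together with $r_{\textup{an}}(E)=1$ and the Euler-system bound, or can simply be quoted), a rank-one non-degenerate pairing evaluated on a nonzero vector is nonzero, so $\langle\widetilde{\frak{Z}}_{\textup{BK}},\widetilde{\frak{Z}}_{\textup{BK}}\rangle_{\textup{Nek}}\neq0$. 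Therefore $\frac{d^2}{ds^2}L_p(E,s)\big|_{s=1}\neq0$, giving $\textup{ord}_{s=1}L_p(E,s)=2=1+r_{\textup{an}}(E)$.

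\textbf{Main obstacle.} The only genuinely delicate point is the transition from non-degeneracy of the \emph{extended} Nekov\'a\v{r} pairing on $\widetilde{H}^1_f(V)$ to non-vanishing of $\langle\widetilde{\frak{Z}}_{\textup{BK}},\widetilde{\frak{Z}}_{\textup{BK}}\rangle_{\textup{Nek}}$ for the specific element $\widetilde{\frak{Z}}_{\textup{BK}}$. One must argue that $\widetilde{\frak{Z}}_{\textup{BK}}$ is not isotropic: this uses that it comes from $\textup{Sel}_p(E/\QQ)\otimes\QQ_p$ via $\frak{s}$, that on this subspace the Nekov\'a\v{r} pairing agrees up to the normalization factor $-E_p(1)$ with Schneider's (classical) height pairing, and that a non-degenerate symmetric pairing on a $1$-dimensional space cannot vanish on a nonzero vector. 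Everything else is bookkeeping: assembling the vanishing of $L_p(E,1)$ and of $\frac{d}{ds}L_p(E,s)\big|_{s=1}$ from the exceptional zero phenomenon and Greenberg--Stevens, and then reading off the order of vanishing from Theorem~\ref{thm:mainr1}.
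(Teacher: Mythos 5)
Your argument is correct and follows the same route as the paper's own proof: Greenberg--Stevens for $r_{\textup{an}}(E)=0$, and for $r_{\textup{an}}(E)=1$ the combination of Theorem~\ref{thm:mainr1}, \cite[Proposition 11.4.9]{nek} (reducing non-degeneracy to the restriction to $\frak{s}(\textup{Sel}_p(E/\QQ)\otimes\QQ_p)$), and the one-dimensionality of $\textup{Sel}_p(E/\QQ)\otimes\QQ_p$ to conclude $\langle\widetilde{\frak{Z}}_{\textup{BK}},\widetilde{\frak{Z}}_{\textup{BK}}\rangle_{\textup{Nek}}\neq0$. You simply make explicit the bookkeeping steps (that $\textup{ord}_{s=1}L_p\geq2$ via Greenberg--Stevens, that $\frak{z}_0^{\textup{BK}}\neq0$ via Venerucci, and that $\lambda_{\textup{BK}}\neq0$ via Proposition~\ref{prop:lambdaisnonvanishing}) which the paper leaves implicit.
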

\begin{proof}
The assertion is due to Greenberg-Stevens~\cite{greenbergstevens} (without the assumption on Nekov\'a\v{r}'s heights) when $r_{\textup{an}}(E)=0$. The case $r_{\textup{an}}(E)=1$ follows from Theorem~\ref{thm:mainr1} and \cite[Proposition 11.4.9]{nek}, which reduces the non-degeneracy of the height pairing $\langle\,,\,\rangle_{\textup{Nek}}$ to the non-degeneracy of its restriction to $\frak{s}\left(\textup{Sel}_p(\QQ,V)\right)\otimes \frak{s}\left(\textup{Sel}_p(\QQ,V^*)\right)$, where both $\textup{Sel}_p(\QQ,V)$ and $\textup{Sel}_p(\QQ,V^*)$ are $\QQ_p$-vector spaces of dimension one.
\end{proof}

\appendix
\section{A Rubin-style formula}
First version of this article has been circulated late 2012. That version and all others up until now relied on Nekov\'a\v{r}'s higher Rubin-style formula proved in \cite[Proposition 11.5.11]{nek}. As indicated by Venerucci, this proposition is flawed (to our embarrassment, we have missed out on that comment to this day). 
The goal in this appendix is to prove a corrected version of Nekov\'a\v{r}'s claim befitting our needs in this current article. The notation we use here is borrowed from Section~\ref{subsubsec:selmer}. We work and prove our statements in for a general Galois representation $X$ as in Section~\ref{subsubsec:selmer} except that we work over $K=\QQ$.


Consider the exact sequence 
$$0\lra X\otimes \Gamma \lra X\otimes \LL/J^2 \lra X\lra 0$$
where the map $X\otimes \Gamma \ra X\otimes \LL/J^2$ is given by the map $x\otimes \gamma \mapsto x\otimes (\gamma-1)$\,. This induces a commutative diagram of complexes
$$\xymatrix{
& 0\ar[d]&0\ar[d]& 0\ar[d]&\\
0\ar[r]& \widetilde{C}^\bullet_f(X\otimes\Gamma) \ar[d]\ar[r]&{C}^\bullet(X\otimes\Gamma) \ar[r]\ar[d]& U_S^-(X\otimes\Gamma) \ar[d]\ar[r]& 0\\
0\ar[r]& \widetilde{C}^\bullet_f(X\otimes \LL/J^2) \ar[d]\ar[r]&{C}^\bullet(X\otimes \LL/J^2) \ar[d]\ar[r]& U_S^-(X\otimes \LL/J^2)\ar[d] \ar[r]& 0\\
0\ar[r]& \widetilde{C}^\bullet_f(X)\ar[d] \ar[r]&{C}^\bullet(X) \ar[d]\ar[r]& U_S^-(X) \ar[d]\ar[r]& 0\\
& 0&0& 0& 
}$$
where $\widetilde{C}^\bullet_f(Z)$ is the short for $\widetilde{C}^\bullet_f(G_{K,S},Z,\Delta(Z))$ for $Z=X, X\otimes \LL/J^2$ and  the horizontal (exact) lines are deduced from \cite[(6.1.3.1)]{nek}. In the level cohomology this gives rise to the commutative diagram (as in \cite[Lemma 1.2.19]{nek}) 
$$\xymatrix@R=2pc @C=1.3pc {&&&&H^0(U_S^-(X))\ar[d]^{\beta^0}\\
&&&&H^1(U_S^-(X))\otimes\Gamma\ar[d]^{i}\\
&&&H^1(G_{\QQ,S},X\otimes\LL/J^2)\ar[d]^{\textup{pr}}\ar[r]^(.5){\textup{res}_S^-}&H^1(U_S^-(X\otimes \LL/J^2))\ar[d]^{\textup{pr}}\\
&H^0(U_S^-(X))\ar[d]_{\beta^0}\ar[r]&\widetilde{H}^1_f(X)\ar[d]^{\beta^1}\ar[r]&H^1(G_{\QQ,S},X)\ar[r]_(.5){\textup{res}_S^-}\ar[d]&H^1(U_S^-(X))\ar[d]\\
&H^1(U_S^-(X))\otimes\Gamma\ar[r]_(.55){\partial}&\widetilde{H}^2_f(X)\otimes\Gamma\ar[r]&H^2(G_{\QQ,S},X)\otimes\Gamma\ar[r]&H^2(U_S^-(X))\otimes\Gamma
}$$
where $\textup{pr}$ is the map induced from the augmentation map $\LL\ra\ZZ_p$.

\begin{lemma}
\label{lem:bocksteinnormalizedderivative}
Suppose that we are given a class $[x_f]=[(x,(x_\ell^+)_{\ell \in S},(\lambda_\ell)_{\ell \in S})] \in \widetilde{H}^1_f(X)$ such that 
$$\textup{pr}\left([x_{\textup{Iw}}]\right)=[x] \in H^1(G_{\QQ,S},X)$$
for some $[x_{\textup{Iw}}]\in H^1(\QQ,X\otimes\LL)$.
Then there exists a class 
$$[Dx_{\textup{Iw}}]=\left([Dx_{\textup{Iw}}]_{\ell}\right)_{\ell \in S} \in \bigoplus_{\ell\in S} H^1(U_\ell^-(X))\otimes\Gamma= H^1(U_S^-(X))\otimes\Gamma$$
 such that 
\begin{itemize}
\item[(i)] $i([Dx_{\textup{Iw}}])=\textup{res}_S^-\left([x_{\textup{Iw}}] \,\textup{mod} \,J^2\right)\,.$\\
\item[(ii)] $\beta^1([x_f])=-\partial([Dx_{\textup{Iw}}])$\,.
\end{itemize}
\end{lemma}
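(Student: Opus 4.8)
The plan is to prove (i) and (ii) at once from a single computation at the level of cochains, using the two explicit descriptions that Nekov\'{a}\v{r}'s formalism provides: the Bockstein $\beta$ on $H^1$ is ``lift a cocycle to $\widetilde C^\bullet_f(X\otimes\LL/J^2)$, apply the differential, and read off the (normalized) result in $\widetilde C^\bullet_f(X)\otimes\Gamma$'', while $\partial\colon H^1(U_S^-(X))\otimes\Gamma\to\widetilde H^2_f(X)\otimes\Gamma$ is induced by the inclusion of the fibre complex $U_S^-(X)[-1]\hookrightarrow\widetilde C^\bullet_f(X)$ of \cite[(6.1.3.1)]{nek} (tensored with $\Gamma$). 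Throughout, fix a cocycle $(x,(x_\ell^+)_\ell,(\lambda_\ell)_\ell)$ representing $[x_f]$ and a cocycle $\widetilde x_{\textup{Iw}}\in Z^1(G_{\QQ,S},X\otimes\LL)$ representing $[x_{\textup{Iw}}]$; after subtracting a coboundary from $\widetilde x_{\textup{Iw}}$ (and adjusting the representative of $[x_f]$ accordingly) I may assume $\textup{pr}(\widetilde x_{\textup{Iw}})=x$ on the nose, and I set $\bar x_{\textup{Iw}}:=\widetilde x_{\textup{Iw}}\bmod J^2$.

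For (i): the cocycle $\textup{res}_S^-(\bar x_{\textup{Iw}})$ maps under $\textup{pr}$ to $\textup{res}_S^-(x)$, which is a coboundary because $[x]$ lies in the image of $\widetilde H^1_f(X)\to H^1(G_{\QQ,S},X)$ and the Selmer exact sequence of Proposition~\ref{prop:compare1} forces $\textup{res}_S^-([x])=0$. Hence, by exactness of the column $H^1(U_S^-(X))\otimes\Gamma\xrightarrow{\,i\,}H^1(U_S^-(X\otimes\LL/J^2))\xrightarrow{\,\textup{pr}\,}H^1(U_S^-(X))$ of the big diagram, a class $[Dx_{\textup{Iw}}]$ with $i([Dx_{\textup{Iw}}])=\textup{res}_S^-([x_{\textup{Iw}}]\bmod J^2)$ exists and is well defined up to $\textup{im}\,\beta^0$ — exactly the ambiguity allowed in the statement.

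For (ii) I would lift $(x,(x_\ell^+),(\lambda_\ell))$ to the $1$-cochain $\widetilde y:=(\bar x_{\textup{Iw}},(\widetilde x_\ell^+),(\widetilde\lambda_\ell))\in\widetilde C^1_f(X\otimes\LL/J^2)$, where $\widetilde x_\ell^+$ and $\widetilde\lambda_\ell$ are any lifts of $x_\ell^+,\lambda_\ell$ (these exist since $\LL/J^2\to\ZZ_p$ is $\ZZ_p$-split and, for $\ell\neq p$, $I_\ell$ acts trivially on $\LL$). Because $\widetilde x_{\textup{Iw}}$ is a cocycle, $d\widetilde y$ has vanishing $C^\bullet(G_{\QQ,S},\cdot)$-component, and its remaining components $(d\widetilde x_\ell^+)_\ell$, $(i_\ell^+\widetilde x_\ell^+-d\widetilde\lambda_\ell-\textup{res}_\ell\bar x_{\textup{Iw}})_\ell$ reduce to $0$ modulo $J$ since $(x,(x_\ell^+),(\lambda_\ell))$ is a cocycle; so the normalized cochain $j^{-1}(d\widetilde y)$ lives in $\widetilde C^2_f(X)\otimes\Gamma$ and represents $\beta^1([x_f])$, and — its $C^\bullet(G_{\QQ,S},\cdot)$-component being zero — it lies in the subcomplex $U_S^-(X)[-1]\otimes\Gamma$. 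Calling $c\in Z^1(U_S^-(X))\otimes\Gamma$ the corresponding cocycle, one has $\beta^1([x_f])=\partial([c])$ by construction of $\partial$, so I would simply \emph{define} $[Dx_{\textup{Iw}}]:=-[c]$, which gives (ii). This same class also satisfies (i): $i([c])$ is the class of $d\widetilde y$ in $H^1(U_S^-(X\otimes\LL/J^2))$, and as $U_S^-$-cochains $d\widetilde y=-\textup{res}_S^-(\bar x_{\textup{Iw}})-d_{U_S^-}\bigl((\widetilde x_\ell^+),(\widetilde\lambda_\ell)\bigr)$, so $i([c])=-\textup{res}_S^-([\bar x_{\textup{Iw}}])$ and hence $i([Dx_{\textup{Iw}}])=\textup{res}_S^-([x_{\textup{Iw}}]\bmod J^2)$. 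One should also note that a different set of auxiliary choices changes $[Dx_{\textup{Iw}}]$ only by $\textup{im}\,\beta^0$, consistently with (ii) since $\partial\circ\beta^0=0$.

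I expect the only real difficulty to be bookkeeping: carrying every sign and the normalization factor $(\gamma-1)/\log_p(\rho_{\textup{cyc}}(\gamma))$ built into Nekov\'{a}\v{r}'s Bockstein, the sign in the cone $U_S^-=\textup{Cone}\bigl(U_S^+\xrightarrow{-i^+}C^\bullet(G_\ell,\cdot)\bigr)$, and the sign of the map $U_S^-(X)[-1]\to\widetilde C^\bullet_f(X)$ in \cite[(6.1.3.1)]{nek} — it is precisely the combination of these conventions that produces the minus sign in (ii). An alternative, once (i) has pinned down $[Dx_{\textup{Iw}}]$, is to deduce (ii) from the anticommutativity of the two connecting homomorphisms attached to the $3\times3$ diagram of complexes displayed above, i.e.\ from \cite[Lemma 1.2.19]{nek}; but that reference still carries a sign one must trace, and the independence of the construction from the auxiliary choices is most transparent in the cochain picture above.
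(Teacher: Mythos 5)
Your construction is correct and takes a more explicit route than the paper's. The paper simply cites \cite[Lemma~1.2.19]{nek} (anticommutativity of connecting maps in the displayed $3\times3$ diagram of complexes) to produce a class $D$ together with an auxiliary $t\in H^0(U_S^-(X))$ satisfying $i(D)=\textup{res}_S^-([x_{\textup{Iw}}]\bmod J^2)$ and $\beta^1([x_f])+\partial(D)-\partial\beta^0(t)=0$, then sets $[Dx_{\textup{Iw}}]=D-\beta^0(t)$ and uses $i\circ\beta^0=0$ (from the Bockstein exact sequence) to preserve (i). You instead build the class directly at the cochain level: after normalizing so that $\textup{pr}(\widetilde x_{\textup{Iw}})=x$ on the nose, the Bockstein cocycle $j^{-1}(d\widetilde y)$ has vanishing $C^\bullet(G_{\QQ,S},\cdot)$-component, so it lies in the image of $U_S^-(X)[-1]\otimes\Gamma$, and its negative verifies (i) and (ii) simultaneously by inspection. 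Both are legitimate proofs; yours is more self-contained (it effectively reproves the case of Lemma~1.2.19 that is needed here and makes the class concrete), at the cost of the sign bookkeeping you explicitly flag, whereas the paper is shorter by quoting the general lemma as a black box.

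One small caution: your closing remark that $\partial\circ\beta^0=0$ is not justified. The commuting (up to sign) square at the bottom left of the diagram gives $\partial\circ\beta^0=\pm\,\beta^1\circ\iota_0$, where $\iota_0\colon H^0(U_S^-(X))\to\widetilde H^1_f(X)$ is the connecting map, and there is no reason for this to vanish in general; indeed, the correction term $\partial\beta^0(t)$ that survives the paper's appeal to \cite[Lemma~1.2.19]{nek} is exactly this obstruction, which the paper absorbs by the substitution $D\mapsto D-\beta^0(t)$. This does not affect the validity of your proof, since the lemma is existential and your specific $[Dx_{\textup{Iw}}]$ satisfies (i) and (ii) by construction, but the side claim that the ambiguity $\textup{im}\,\beta^0$ is ``consistent with (ii)'' should be dropped or rephrased: replacing $[Dx_{\textup{Iw}}]$ by $[Dx_{\textup{Iw}}]+\beta^0(s)$ for an arbitrary $s$ preserves (i) but may destroy (ii).
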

\begin{proof}
Since we have 
$$\textup{pr}\circ\textup{res}_S^-\left([x_{\textup{Iw}}]\,\textup{mod}\, J^2\right)=\textup{res}_S^-([x])=0$$ 
it follows from Lemma~1.2.19 of \cite{nek} that there exists a class $D \in H^1(U_S^-(X))\otimes\Gamma$ that verifies 
\begin{itemize}
\item $i(D)=\textup{res}_S^-\left([x_{\textup{Iw}}] \,\textup{mod}\, J^2\right)$\,,\\
\item $\beta^1([x_f])+\partial(D)-\partial\circ\beta^0(t)=0$ for some $t\in H^0(U_S^-(X))$\,.
\end{itemize}
Set $[Dx_{\textup{Iw}}]=D-\beta^0(t)$.
\end{proof}
We will call the class $[Dx_{\textup{Iw}}]$ the \emph{Bockstein-normalization of the derivative of} of $x_{\textup{Iw}}$. The main goal in this appendix is to give a proof of the following Proposition, which we refer to as the \emph{Rubin-style-formula}.
\begin{prop}
\label{propappendix:RSformula}
Let $[y_f]=[(y,(y_\ell^+)_{\ell \in S},(\mu_\ell)_{\ell \in S})] \in \widetilde{H}^1_f(X^*)$ be any class. For $[x_f]$ and $[x_{\textup{Iw}}]$ as in the statement of Lemma~\ref{lem:bocksteinnormalizedderivative} we have
$$\langle [x_f], [y_f]  \rangle_{\textup{Nek}}=-\sum_{\ell\in S}\textup{inv}_\ell\left((Dx_{\textup{Iw}})_\ell\cup y_\ell^+\right)\,.$$
\end{prop}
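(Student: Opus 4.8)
The plan is to combine the two-step definition of Nekov\'a\v{r}'s height pairing recalled in \S\ref{subsubsec:heights} with Lemma~\ref{lem:bocksteinnormalizedderivative}, and then to recognise the resulting global Poitou--Tate pairing as a sum of local invariants using the duality theory of Selmer complexes from \cite[\S6.3]{nek}. First I would observe that, by definition, $\langle[x_f],[y_f]\rangle_{\textup{Nek}}=\langle\beta^1([x_f]),[y_f]\rangle_{\textup{PT}}$, where $\langle\,,\,\rangle_{\textup{PT}}$ is extended $R[\Gamma]$-linearly to a pairing $(\widetilde H^2_f(X)\otimes\Gamma)\otimes\widetilde H^1_f(X^*)\to R\otimes\Gamma$. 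By Lemma~\ref{lem:bocksteinnormalizedderivative}(ii) one has $\beta^1([x_f])=-\partial([Dx_{\textup{Iw}}])$, where $\partial\colon H^1(U_S^-(X))\otimes\Gamma\to\widetilde H^2_f(X)\otimes\Gamma$ is the connecting map of the triangle $U_S^-(X)[-1]\to\widetilde{\mathbf{R}\Gamma}_f(X)\to\mathbf{R}\Gamma(G_{\QQ,S},X)\to U_S^-(X)$. Thus everything reduces to proving
\[
\big\langle\partial(a),[y_f]\big\rangle_{\textup{PT}}=\sum_{\ell\in S}\textup{inv}_\ell\big(a_\ell\cup y_\ell^+\big)\qquad\text{for all }a=(a_\ell)_{\ell}\in H^1(U_S^-(X))\otimes\Gamma,
\]
and then specialising to $a=[Dx_{\textup{Iw}}]$ and carrying the sign produces the stated formula. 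Note that the choice of $[Dx_{\textup{Iw}}]$ is immaterial: any admissible class furnished by Lemma~\ref{lem:bocksteinnormalizedderivative} satisfies $\partial([Dx_{\textup{Iw}}])=-\beta^1([x_f])$, so the right-hand side above is automatically choice-independent.

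To prove the displayed identity I would use that $\partial$ and the ``$U^+$-component'' map
\[
\textup{res}^+_S\colon\widetilde H^1_f(X^*)\lra\bigoplus_{\ell\in S}H^1(U_\ell^+(X^*)),\qquad [(y,(y_\ell^+),(\mu_\ell))]\longmapsto ([y_\ell^+])_{\ell\in S},
\]
are adjoint with respect to the global duality pairing $\langle\,,\,\rangle_{\textup{PT}}$, coming from $\widetilde{\mathbf{R}\Gamma}_f(X)\otimes_R^{\mathbf{L}}\widetilde{\mathbf{R}\Gamma}_f(X^*)\to R[-3]$, and the local duality pairings $U_\ell^-(X)\otimes U_\ell^+(X^*)\to R[-2]$. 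Here I rely on the fact that, for the Greenberg local conditions in play, $\Delta(X)$ and $\Delta(X^*)$ are mutual orthogonal complements, so the Selmer-complex pairing of \cite[\S6.3]{nek} is perfect and is assembled from the global cup product (Poitou--Tate duality) on $\mathbf{R}\Gamma(G_{\QQ,S},-)$ together with the local duality pairings above; the sought adjunction is then built into that construction. It gives $\langle\partial(a),[y_f]\rangle_{\textup{PT}}=\pm\langle a,\textup{res}^+_S([y_f])\rangle_{\textup{loc}}$, and by definition $\langle a,\textup{res}^+_S([y_f])\rangle_{\textup{loc}}=\sum_\ell\textup{inv}_\ell(a_\ell\cup y_\ell^+)$. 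Concretely this can be verified at the cochain level: a representative of $\partial(a)$ in $\widetilde Z^2_f(X)$ has vanishing global ($C^\bullet(G_{\QQ,S},X)$) component, and upon inserting such a representative together with a representative $(y,(y_\ell^+),(\mu_\ell))$ of $[y_f]$ into Nekov\'a\v{r}'s explicit cochain formula for the Selmer-complex pairing, the vanishing of the global component annihilates every ``global'' and ``mixed'' term, leaving exactly $\sum_\ell\textup{inv}_\ell(a_\ell\cup y_\ell^+)$. The tensor factor $\Gamma$ rides along inertly throughout, since $\langle\,,\,\rangle_{\textup{PT}}$ is $R$-bilinear.

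The main obstacle I anticipate is the sign bookkeeping. Nekov\'a\v{r}'s formalism introduces signs through the mapping-cone and shift conventions, through the cup product of cones and of Selmer complexes, and through the normalisations of the connecting maps, of the local invariants, and of the global duality; the precise placement of the single overall minus sign in the assertion depends on all of these. So rather than invoke the adjunction as a black box, I would carry out the cochain computation sketched above with the conventions of \cite[\S1.3, \S3.4, \S6.3]{nek} fixed once and for all, and check that ``apply $\partial$, then pair against $[y_f]$'' yields $+\sum_\ell\textup{inv}_\ell(a_\ell\cup y_\ell^+)$; combined with $\beta^1([x_f])=-\partial([Dx_{\textup{Iw}}])$ from Lemma~\ref{lem:bocksteinnormalizedderivative}(ii) this produces the minus sign in the statement. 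Apart from this, the argument is formal: the first reduction is immediate from the definition of $\langle\,,\,\rangle_{\textup{Nek}}$ and Lemma~\ref{lem:bocksteinnormalizedderivative}, and the compatibility of the duality pairing with the defining exact triangles is exactly what the Selmer-complex duality of \cite[\S6.3]{nek} was built to provide.
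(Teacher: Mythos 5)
Your argument is correct, and it streamlines the paper's proof in a way worth pointing out. The paper does not apply the local-duality computation directly to $[Dx_{\textup{Iw}}]$; instead it first writes out a cocycle representative $\beta^1(x_f)=(z,z_S^+,\omega_S)$, observes that $z=dZ$, constructs the explicit cocycle $\alpha^-_{z_S^+}=(z_S^+,\omega_S+\textup{res}_S(Z))$ with $\partial([\alpha^-_{z_S^+}])=\beta^1([x_f])$, applies the pairing formula to that cocycle, and only then compares $[\alpha^-_{z_S^+}]$ with $-[Dx_{\textup{Iw}}]$: exactness places their sum in the image of $\textup{res}_S^-$, i.e.\ equal to some $\textup{res}_S(\frak{E})^-$ up to a coboundary, and a separate reciprocity computation kills the contribution of $\frak{E}$ while Remark~\ref{rem:Appnoambiguity} handles the coboundary. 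Your reorganisation replaces all of this by the single identity $\langle\partial(a),[y_f]\rangle_{\textup{PT}}=\sum_\ell\textup{inv}_\ell(a_\ell\cup y_\ell^+)$ for an arbitrary class $a\in H^1(U_S^-(X))\otimes\Gamma$, applied to $a=[Dx_{\textup{Iw}}]$ via $\beta^1([x_f])=-\partial([Dx_{\textup{Iw}}])$ from Lemma~\ref{lem:bocksteinnormalizedderivative}(ii). Because the left-hand side of this identity is manifestly independent of the cocycle representing $a$ and factors through $\partial(a)$, both the coboundary-independence of Remark~\ref{rem:Appnoambiguity} and the vanishing of the $\frak{E}$-term fall out for free as corollaries, so you do not need to reproduce those computations separately. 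The price you pay is that the identity must actually be established at the cochain level; there your plan is sound but compressed. One small exposition point: when you say the vanishing of the global component ``annihilates every global and mixed term,'' this is slightly misleading. The local terms $\omega_\ell\cup\textup{res}_\ell(y)$ and $i_\ell^+(z_\ell^+)\cup\mu_\ell$ from Definition~\ref{define:globalcupproducpairing} do \emph{not} individually vanish; rather, using $i_\ell^+(b_\ell)=d\alpha_\ell$, the cocycle relation $\textup{res}_\ell(y)+d\mu_\ell=i_\ell^+(y_\ell^+)$, and the fact that $\textup{inv}_\ell$ kills coboundaries, they collapse to $\textup{inv}_\ell(\alpha_\ell\cup i_\ell^+(y_\ell^+))$, which in turn equals $\textup{inv}_\ell(a_\ell\cup y_\ell^+)$ via the null-homotopy $\frak{h}_\ell$ in Nekov\'a\v{r}'s local cup product on $U_\ell^-(X)\otimes U_\ell^+(X^*)$. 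That chain of reductions (which the paper executes explicitly, ending with the identity marked \ding{168}) is the substantive content and has to be carried out; once you do, your route gives the statement with fewer moving parts than the paper's.
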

\begin{rem}
\label{rem:Appnoambiguity}
The reader might feel uneasy that the expression $(Dx_{\textup{Iw}})_\ell$ is defined only up to a coboundary $d(u,v) \in dU_S^-(X)^0\otimes\Gamma$. We check here that 
$$\sum_{\ell\in S}\textup{inv}_\ell\left(d(u,v)\cup y_{\ell}^+\right)=0$$
and therefore verify that the right side of the asserted equality in Proposition~\ref{propappendix:RSformula} is well-defined. Indeed,
\begin{align*}
\textup{inv}_\ell\left(d(u,v)\cup y_{\ell}\right)&=\textup{inv}_\ell\left((du,-i_{\ell}^+(u)+dv)\cup y_{\ell}^+\right)\\
&=\textup{inv}_\ell\left((-i_{\ell}^+(u)+dv)\cup i_\ell^+(y_\ell^+) +\frak{h}_\ell(du\otimes y_\ell^+)\right)\\
&=\textup{inv}_\ell\left((-i_{\ell}^+(u)+dv)\cup i_\ell^+(y_\ell^+) \right)\\
&=\textup{inv}_\ell\left(dv\cup i_\ell^+(y_\ell^+) \right)\\
&=\textup{inv}_\ell\left(dv\cup(\textup{res}_{\ell}(y)+d\mu_\ell) \right)\\
&=\textup{inv}_\ell\left(dv\cup\textup{res}_{\ell}(y) \right)\\
&=\textup{inv}_\ell\left(d(v\cup\textup{res}_{\ell}(y)) \right)
\end{align*}
vanishes. Here, 
\begin{itemize}
\item $\frak{h}_\ell: U_S^+(X)^2\otimes U_S^+(X^*(1))^1\ra \bigoplus_{\ell\in S} \tau_{\geq 2} C^\bullet(G_\ell,R(1))$ is a null-homotopy to the cup-product pairing
$$U_S^+(X)\otimes U_S^+(X^*(1)) \stackrel{i_\ell^+\otimes i_\ell^+}{\lra} \bigoplus_{\ell\in S} C^\bullet(G_\ell,X)\otimes  \bigoplus_{\ell\in S} C^\bullet(G_\ell,X^*(1))\stackrel{\tau_{\geq2}\circ\cup}{\lra}\oplus_{\ell\in S}\,\tau_{\geq 2}  C^\bullet(G_\ell,R(1))\,,$$
\item the second equality is due to Nekov\'a\v{r}'s computations in \cite[\S6.2.2]{nek},
\item the last equality is because 
$$d(v\cup\textup{res}_{\ell}(y))=dv\cup \textup{res}_{\ell}(y) -v\cup \textup{res}_{\ell}(dy)=dv\cup \textup{res}_{\ell}(y)$$
since $dy=0$.
\end{itemize}
\end{rem}
Before we proceed with the proof of Proposition~\ref{propappendix:RSformula}, we first prove some auxiliary statements (which are mostly trivial in the special case when $H^0(U_S^-(X))=0$).

Set $\iota_p^-: C^\bullet (G_p, X) \ra C^\bullet (G_p, F^-_p X)$ so that we have an exact sequence of complexes 
$$0\lra C^\bullet(G_p, F^+_p X)\stackrel{i_p^+}{\lra} C^\bullet (G_p, X) \stackrel{\iota_p^-}{\lra} C^\bullet (G_p, F^-_p X)\lra0\,.$$
\begin{define}
\label{define:therrorcomplex}
Define the complex $C^\bullet_+(X)$ by setting
$$C^i_+(X)=C^{i+1}(G_p,F^+_pX)\oplus C^i(G_p,F_p^+X)$$
(and $C^0_+(X)=C^0(G_p,F^+_pX)\oplus 0$) with differentials $d_{i}(C^\bullet_+)=\left(\begin{array}{cc} d_{i+1} & 0 \\ -\textup{id} & d_i\end{array}\right)$\,. 
\end{define}
\begin{lemma}
\label{lemma:errorcomplexacyclic}
The complex $ C^\bullet_+(X)$ is acyclic.
\end{lemma}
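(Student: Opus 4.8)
The final statement to prove is Lemma~\ref{lemma:errorcomplexacyclic}: the complex $C^\bullet_+(X)$, whose $i$-th term is $C^{i+1}(G_p,F_p^+X)\oplus C^i(G_p,F_p^+X)$ with the ``mapping cone of the identity''-type differential, is acyclic.

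\medskip

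\textbf{The plan.} The complex $C^\bullet_+(X)$ is, up to a shift, precisely the mapping cone of the identity morphism $\mathrm{id}\colon C^\bullet(G_p,F_p^+X)\to C^\bullet(G_p,F_p^+X)$. Indeed, writing $D^\bullet=C^\bullet(G_p,F_p^+X)$, one has $\mathrm{Cone}(\mathrm{id}\colon D^\bullet\to D^\bullet)^i = D^{i+1}\oplus D^i$ with differential $\begin{pmatrix} -d & 0\\ -\mathrm{id} & d\end{pmatrix}$, which agrees with $C^i_+(X)$ after the harmless sign change on the upper-left entry (conjugating by $(-1)^i$ on the first summand, or simply observing that replacing $d$ by $-d$ on a complex gives an isomorphic complex). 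The mapping cone of any quasi-isomorphism is acyclic, and the identity map is certainly a quasi-isomorphism, so $C^\bullet_+(X)$ is acyclic. This is the conceptual one-line proof, and it is the approach I would take.

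\medskip

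\textbf{Making it self-contained.} If one prefers to avoid invoking the general fact about cones of quasi-isomorphisms, I would instead exhibit an explicit contracting homotopy. Suppose $(a,b)\in C^i_+(X)=D^{i+1}\oplus D^i$ is a cocycle, i.e. $d_{i+1}a=0$ and $-a+d_i b=0$, so $a=d_i b$. Then $(a,b)=d_{i-1}(C^\bullet_+)(0,b') $ cannot quite work directly because $b$ itself need not be a coboundary in $D^\bullet$; instead one checks $(a,b)=d_{i-1}(C_+^\bullet)\big((-1)^{?}(b,0)\big)$: applying the differential $\begin{pmatrix} d_{i} & 0\\ -\mathrm{id} & d_{i-1}\end{pmatrix}$ to $(b,0)\in C^{i-1}_+(X)=D^i\oplus D^{i-1}$ yields $(d_i b,\,-b)=(a,-b)$, which is off by a sign in the second coordinate; so one takes the homotopy $h_i\colon C^i_+(X)\to C^{i-1}_+(X)$, $h_i(a,b)=(-b,0)$ and verifies $d_{i-1}h_i+h_{i+1}d_i=\mathrm{id}$ on $C^\bullet_+(X)$ by a direct two-line matrix computation. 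Either presentation works; the homotopy computation is routine once the cone description is noted, so I would state the cone interpretation and then, if desired, record the one-line homotopy $h(a,b)=(-b,0)$.

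\medskip

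\textbf{Main obstacle.} There is essentially no obstacle here: the only thing to get right is the bookkeeping of signs in the differential of Definition~\ref{define:therrorcomplex} versus the standard mapping-cone convention, and the verification that the boundary term $C^0_+(X)=C^0(G_p,F_p^+X)\oplus 0$ (rather than $C^1\oplus C^0$) does not spoil acyclicity in degree $0$ --- which it does not, since a cocycle $(a,0)$ in degree $0$ satisfies $a=d_0\cdot 0=0$, and in degree $-1$ and below the complex vanishes. So the entire content is the identification of $C^\bullet_+(X)$ with the cone on an identity map, and I would write the proof in that form.
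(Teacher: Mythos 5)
Your argument is correct and is essentially the paper's, only more explicit: the paper disposes of this lemma with the single sentence ``Direct computation of cocycles and coboundaries,'' and your cone-of-identity description (equivalently the contracting homotopy $h(a,b)=(-b,0)$) is exactly the conceptual form of that computation.

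One clarification worth recording, since you flag ``bookkeeping of signs'' as the only potential pitfall but leave it slightly vague: as printed, the differential $d_i(C^\bullet_+)=\begin{pmatrix} d_{i+1} & 0\\ -\mathrm{id} & d_i\end{pmatrix}$ of Definition~\ref{define:therrorcomplex} does not square to zero --- one finds $d^2(a,b)=(0,-2\,d_{i+1}a)$ --- so the paper's matrix has a typo and the upper-left entry should carry a sign, i.e.\ the intended differential is that of $\mathrm{Cone}(\mathrm{id}\colon D^\bullet\to D^\bullet)$ (in the same vein, the parenthetical ``$C^0_+(X)=C^0(G_p,F_p^+X)\oplus 0$'' should read $C^{-1}_+(X)=C^0(G_p,F_p^+X)\oplus 0$, consistent with the general formula $C^i_+=D^{i+1}\oplus D^i$ and $D^{-1}=0$; this is forced if the short exact sequence $0\to C^\bullet_+(X)\to U_p^-(X)\to C^\bullet(G_p,F_p^-X)\to 0$ in the proof of Lemma~\ref{lemma:minuscohomologycomparison} is to be degreewise exact). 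With that corrected sign your homotopy does check out: $dh(a,b)=(db,b)$ and $hd(a,b)=(a-db,0)$, whose sum is $(a,b)$. So you have identified the right fix; just be aware that it is a genuine correction to the printed differential, not merely a matter of matching conventions.
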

\begin{proof}
Direct computation of cocycles and coboundaries.
\end{proof}
\begin{lemma}
\label{lemma:minuscohomologycomparison}
The morphism $j_p^-:\, U_p^-(X)\ra C^\bullet (G_p,F^-_p X)$
given explicitly in degree $i \geq 1$ by 
$$\xymatrix{j_p^-:\, U_p^-(X)^i=C^{i+1}(G_p,F^+_pX)\oplus C^i(G_p,X)\ar[r]^(.72){(0, \iota_p^-)}& C^i (G_p,F^-_p X)
}$$(resp., in degree $\leq 0$ by the $zero$-map) is a quasi-isomorphism. Furthermore the morphism 
$\iota_p^-$ factors through the quasi-isomorphism $j_p^-$.
\end{lemma}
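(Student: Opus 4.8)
The plan is to realize $j_p^-$ as the quotient map in a short exact sequence of complexes whose kernel is the acyclic complex $C_+^\bullet(X)$ of Definition~\ref{define:therrorcomplex}, and then to read off both assertions from the associated long exact cohomology sequence. Applying the continuous-cochain functor to the short exact sequence $0\lra F_p^+X\stackrel{i_p^+}{\lra}X\stackrel{\iota_p^-}{\lra}F_p^-X\lra 0$ of $G_p$-modules produces a degreewise exact sequence of complexes $0\lra C^\bullet(G_p,F_p^+X)\stackrel{i_p^+}{\lra}C^\bullet(G_p,X)\stackrel{\iota_p^-}{\lra}C^\bullet(G_p,F_p^-X)\lra 0$. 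Feeding its first two terms into the cone $U_p^-(X)=\textup{Cone}(-i_p^+\colon C^\bullet(G_p,F_p^+X)\to C^\bullet(G_p,X))$, one checks that the formula $(a,b)\mapsto(0,\iota_p^-(b))$, normalized as in the statement, defines a morphism of complexes; the only input needed is the identity $\iota_p^-\circ i_p^+=0$ together with the explicit cone differential. Its kernel in degree $i$ consists of the pairs $(a,b)$ whose second coordinate is valued in $F_p^+X$, and with the induced differential this kernel is precisely the complex $C_+^\bullet(X)$, whose low-degree term is tailored to the chosen normalization of $j_p^-$. Thus one obtains a short exact sequence $0\lra C_+^\bullet(X)\lra U_p^-(X)\stackrel{j_p^-}{\lra}C^\bullet(G_p,F_p^-X)\lra 0$.

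Granting this, the quasi-isomorphism assertion is immediate: since $C_+^\bullet(X)$ is acyclic by Lemma~\ref{lemma:errorcomplexacyclic}, the long exact sequence attached to the displayed short exact sequence collapses to isomorphisms $H^i(j_p^-)\colon H^i(U_p^-(X))\stackrel{\sim}{\lra}H^i(G_p,F_p^-X)$ for all $i$. Equivalently, one may identify this long exact sequence with the long exact cohomology sequence of $0\to F_p^+X\to X\to F_p^-X\to 0$ and invoke the five lemma, the identification being built into the construction of the cone. For the factorization assertion, the cone carries its canonical structure morphism $\mathrm{can}\colon C^\bullet(G_p,X)\to U_p^-(X)$, $b\mapsto(0,b)$, which is a morphism of complexes, and by construction $j_p^-\circ\mathrm{can}=\iota_p^-$ (on the nose in degrees $\geq 1$, and in any case as morphisms in the derived category after the harmless low-degree normalization). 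Hence $\iota_p^-$ factors through the quasi-isomorphism $j_p^-$ via $\mathrm{can}$, as asserted.

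The one place that requires genuine care is the low-degree bookkeeping: with Nekov\'a\v{r}'s conventions for signs and shifts in the mapping cone one must check that the normalized $j_p^-$ is honestly a morphism of complexes and that its kernel coincides exactly with $C_+^\bullet(X)$ as indexed in Definition~\ref{define:therrorcomplex}. Once this is settled, the rest is the familiar principle that the cone of a degreewise-injective map of complexes is quasi-isomorphic to its cokernel, with Lemma~\ref{lemma:errorcomplexacyclic} furnishing precisely the acyclicity needed to upgrade the comparison to an isomorphism on cohomology.
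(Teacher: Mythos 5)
Your argument is correct and is essentially identical to the paper's: both realize $j_p^-$ as the cokernel in the short exact sequence of complexes $0\to C^\bullet_+(X)\to U_p^-(X)\xrightarrow{j_p^-} C^\bullet(G_p,F_p^-X)\to 0$ (the paper writes the injection as $(\mathrm{id},i_p^+)$, you describe its image as the kernel of $j_p^-$ — same thing), and then invoke acyclicity of $C^\bullet_+(X)$ together with the long exact cohomology sequence. The factorization remark via the canonical map $b\mapsto(0,b)$ also matches the paper's (unspelled) ``obvious from definitions.''
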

\begin{proof}
The following sequence of complexes 
$$0\lra C^\bullet_+(X)\stackrel{(\textup{id},i_p^+)}{\lra} U_p^-(X)\lra C^\bullet (G_p,F_p^-X)\lra 0$$
is exact (where the complex $C^\bullet_+(X)$ is as given in Definition~\ref{define:therrorcomplex}). For every $i \geq 0$, we therefore have in the level of cohomology an exact sequence 
$$H^i(C^\bullet_+(X)) \lra H^i(U_S^-(X))\lra H^1(G_p,F_p^-X) \lra H^{i+1}(C^\bullet_+(X))$$
The first assertion now follows from Lemma~\ref{lemma:errorcomplexacyclic}.

The second assertion is obvious from definitions.
\end{proof}
\begin{define}
\label{define:minusforallcochains}
For $\alpha=(\alpha_\ell) \in \bigoplus_{\ell \in S} C^1(G_\ell,X)$ let 
$$\alpha^-=(0,\alpha)\in U_S^-(X)^1=U_S^+(X)^2\oplus  \bigoplus_{\ell \in S} C^1(G_\ell,X)$$ 
denote its \emph{singular projection}.
\end{define}
\begin{define}
\label{define:minuscohomology}
Suppose $\alpha=(\alpha_\ell) \in \bigoplus_{\ell \in S} C^1(G_\ell,X)$ is a cochain such that $d\alpha=i_S^+(b)$ for a (unique) 
cocycle $b \in U_S^+(X)^2$, $db=0$. Then the cochain
$$\alpha^-_b:=(b,\alpha) \in U_S^-(X)^1$$  
is a cocycle. We denote its class in $H^1(U_S^-(X))$ by $[\alpha^-_b]$ and call it the \emph{singular projection} of $\alpha$. Furthermore, the cochain $\partial\left(b,\alpha\right) \in \widetilde{C}^2_f(X)$which is the image of the cocycle $(b,\alpha)$ under the natural morphism
$$U_S^-(X)^1\stackrel{\partial}{\lra} \widetilde{C}^2_f(X) =C^2(G_{{\QQ},S},X) \oplus U_S^-(X)^1$$
is the cocycle $(0,b,\alpha) \in \widetilde{Z}^2_f(X)$. Attached to $\alpha$ as above, we therefore have a uniquely determined class $\partial([\alpha^-_b]) \in \widetilde{H}^2_f(X)$.
\end{define}
Note that if $\alpha$ above is itself a cocycle, then we can take $b=0$ and $[\alpha^-]$ is simply the image of this cocycle in $H^1(U_S^-(X))$ (under the obvious map $\bigoplus_{\ell \in S} C^1(G_\ell,X) \ra U_S^-(X)^1$) and the construction in Definition~\ref{define:minuscohomology} slightly extends this notion. 
\begin{define}(Global Cup Product Pairing)
\label{define:globalcupproducpairing} Let 
$$\langle \,,\,\rangle_{PT}: \widetilde{H}^2_f(X)\otimes\widetilde{H}^1_f(X^*(1))\lra R$$
denote Nekov\'a\v{r}'s cup-product pairing whose definition we briefly recall here. Nekov\'a\v{r} first defines a cup product (up to homotopy)
$$\cup: \widetilde{C}^2_f(X)\otimes\widetilde{C}^1_f(X^*(1))\lra C^3_c(G_{\QQ,S},\ZZ_p(1))\otimes R$$
where 
$$C^\bullet_c(G_{\QQ,S},\ZZ_p(1))=\textup{cone}\left(C^\bullet(G_{\QQ,S},\ZZ_p(1))\stackrel{\textup{res}_S}{\lra}C^\bullet(G_p,\ZZ_p(1))\right)$$ denotes the compactly supported cochains. In the level of cohomology, this yields a pairing 
$$\cup: \widetilde{H}^2_f(X)\otimes\widetilde{H}^1_f(X^*(1))\lra H^3_c(G_{\QQ,S},\ZZ_p(1))\otimes R\,.$$
This pairing is given explicitly as follows. Let $[z_f]=[\left(z,z_S^+=(z_\ell^+),\omega_S=(\omega_\ell)\right)] \in  \widetilde{H}^2_f(X)$ and $y_f=\left[\left(y, y_S^+=(y_\ell^+),\mu_S=(\mu_\ell)\right)\right]\in \widetilde{H}^1_f(X^*(1))$. Then 
$$z_f\cup y_f =(z\cup y, \omega_S\cup \textup{res}_S(y) + i_S^+(z_S^+)\cup \mu_S) \in C^3_c(G_{\QQ,S},R(1))\,.$$
Using the cup product pairing, the Poitou-Tate Global pairing is then given using the reciprocity law of class field theory. The definition of $C^\bullet_c(G_{\QQ,S},\ZZ_p(1))$ and global class field theory gives rise to the following diagram with exact rows:
$$\xymatrix{H^2(G_{\QQ,S},\ZZ_p(1))\ar[r]^(.48){\textup{res}_S}\ar@2{-}[d]&\oplus_{\ell \in S}H^2(G_\ell,\ZZ_p(1)) \ar[r]^(.52){\partial_c}\ar@2{-}[d]& H^3_c(G_{\QQ,S},\ZZ_p(1))\ar[r]\ar@{.>}[d]^{\textup{inv}_S}& 0\\
H^2(G_{\QQ,S},\ZZ_p(1))\ar[r]^(.48){\textup{res}_S}&\oplus_{\ell \in S}H^2(G_\ell,\ZZ_p(1)) \ar[r]^(.65){\sum_{\ell\in S}\textup{inv}_\ell}& \ZZ_p \ar[r]& 0}$$
Furthermore one may compute $\textup{inv}_S([z_f\cup y_f])$ to be 
$$\textup{inv}_S([z_f\cup y_f])=\sum_{\ell\in S}\textup{inv}_{\ell}\left(\omega_\ell\cup \textup{res}_\ell(y) + i_\ell^+(z_\ell^+)\cup \mu_\ell + \textup{res}_{\ell}(W)\right)$$
where $W \in C^2(G_{\QQ,S},R(1))$ is \emph{any} cochain such that $dW=z\cup y$. (The existence of such an element is guaranteed since $z\cup y$ is a cocycle and the cohomological dimension of $G_{\QQ,S}$ is 2.) See \cite[(11.3.11.1)]{nek} for a proof of this statement. Note that none of the assumptions (beyond what is in effect here) of Lemma 11.3.11 in loc.cit. is required to deduce (11.3.11.1). We now set 
$$\langle [z_f],[y_f]\rangle_{PT}:=\textup{inv}_S\left([z_f\cup y_f)]\right)=\sum_{\ell\in S}\textup{inv}_{\ell}\left(\omega_\ell\cup  \textup{res}_\ell(y) + i_\ell^+(z_\ell^+)\cup \mu_\ell + \textup{res}_{\ell}(W)\right)\,.$$
\end{define}
\begin{lemma}
\label{lemma:useglobalCFT}
In the notation of Definition~\ref{define:globalcupproducpairing}, suppose that $z=dZ$ is a coboundary. Then 
$$\langle [z_f],[y_f]\rangle_{PT}=\sum_{\ell\in S}\textup{inv}_{\ell}\left(\omega_\ell\cup \textup{res}_\ell(y) + i_\ell^+(z_\ell^+)\cup \mu_\ell\right)$$\,.
\end{lemma}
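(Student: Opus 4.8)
The plan is to use the flexibility in the general formula of Definition~\ref{define:globalcupproducpairing}, in which $W$ may be \emph{any} cochain in $C^2(G_{\QQ,S},R(1))$ satisfying $dW=z\cup y$, so that
$$\langle [z_f],[y_f]\rangle_{PT}=\sum_{\ell\in S}\textup{inv}_{\ell}\bigl(\omega_\ell\cup \textup{res}_\ell(y) + i_\ell^+(z_\ell^+)\cup \mu_\ell + \textup{res}_{\ell}(W)\bigr).$$
First I would observe that, since $z=dZ$ for some $Z\in C^1(G_{\QQ,S},X)$ and since $dy=0$, the graded Leibniz rule gives $d(Z\cup y)=(dZ)\cup y=z\cup y$; hence $W:=Z\cup y$ is an admissible choice. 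Equivalently --- and this is the form in which I would actually argue --- I would replace the cocycle $z_f$ by the cohomologous cocycle $z_f':=z_f-d(Z,0,0)=(0,\,z_S^+,\,\omega_S\pm\textup{res}_S(Z))$, whose global component is now \emph{identically} zero, the sign and the precise shape of the local correction being the ones forced by the cone differential of $\widetilde{C}^\bullet_f$. Since $[z_f']=[z_f]$ the pairing value is unchanged, and one is reduced to proving the formula for a representative whose global cochain vanishes, i.e. to the case $z=0$, $Z=0$.

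In that case $z_f'\cup y_f=\bigl(0,\,(\omega_\ell'\cup \textup{res}_\ell(y) + i_\ell^+(z_\ell^+)\cup \mu_\ell)_\ell\bigr)$ has literally no global component, and the crucial point to verify is that each local term $\omega_\ell'\cup \textup{res}_\ell(y) + i_\ell^+(z_\ell^+)\cup \mu_\ell$ is a genuine $2$-cocycle of $G_\ell$: using the cocycle relations $d\omega_\ell'=\pm\, i_\ell^+(z_\ell^+)$ and $d\mu_\ell=\pm\bigl(\textup{res}_\ell(y)-i_\ell^+(y_\ell^+)\bigr)$ one computes its coboundary to be $\textup{res}_\ell(z'\cup y)=0$, the cross term $i_\ell^+(z_\ell^+)\cup i_\ell^+(y_\ell^+)$ vanishing \emph{on the nose} because for the Greenberg local conditions at hand the coefficient pairing $F_\ell^+X\otimes F_\ell^+X^*(1)\to R(1)$ is identically zero. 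Hence $z_f'\cup y_f$ represents a class in the image of $\partial_c$ in the reciprocity sequence of Definition~\ref{define:globalcupproducpairing}; and because $\textup{inv}_S$ is, by its very construction, induced from $\sum_{\ell\in S}\textup{inv}_\ell$ on $\bigoplus_{\ell}H^2(G_\ell,R(1))$ via that sequence --- this is precisely the ``global class field theory'' input --- one obtains
$$\langle [z_f],[y_f]\rangle_{PT}=\textup{inv}_S\bigl([z_f'\cup y_f]\bigr)=\sum_{\ell\in S}\textup{inv}_{\ell}\bigl(\omega_\ell'\cup \textup{res}_\ell(y) + i_\ell^+(z_\ell^+)\cup \mu_\ell\bigr),$$
which is the asserted identity, working throughout with the representative for which the global cochain is $0$ (equivalently, absorbing $\textup{res}_S(Z)$ into $\omega_S$).

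The step I expect to be the main obstacle is the sign bookkeeping: fixing once and for all the conventions in the cone differentials of $\widetilde{C}^\bullet_f$ and of the compactly supported complex $C^\bullet_c$, and then checking carefully that $\omega_\ell'\cup \textup{res}_\ell(y) + i_\ell^+(z_\ell^+)\cup \mu_\ell$ is a local $2$-cocycle. The only genuinely nonformal ingredient in that check is the vanishing of the pairing $F_\ell^+X\otimes F_\ell^+X^*(1)\to R(1)$, which is exactly what makes the formula of Definition~\ref{define:globalcupproducpairing} correct without a homotopy-correction term; beyond that, no global input is needed except the reciprocity law already packaged inside $\textup{inv}_S$.
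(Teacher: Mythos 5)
Your proof is correct and follows essentially the same route as the paper's: you take $W = Z\cup y$ (equivalently, you pass to the cohomologous representative $z_f - d(Z,0,0)$ with vanishing global component) and observe that $\textup{res}_\ell(W) = \textup{res}_\ell(Z)\cup\textup{res}_\ell(y)$ is then absorbed into $\omega_\ell$, with the remaining obstruction killed by the reciprocity law. You are in fact more careful than the paper here: the paper's parenthetical claim that $Z\cup y$ is a cocycle is inaccurate in general (one has $d(Z\cup y)=z\cup y$), and you correctly identify that the formula on the right-hand side should be read with the correction $\textup{res}_S(Z)$ already absorbed into $\omega_S$ — which is precisely how the lemma is invoked in the proof of Proposition~\ref{propappendix:RSformula}, where the global component of the relevant cocycle is genuinely zero.
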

\begin{proof}
Note that $d(Z\cup y)=dZ\cup y$ since $y$ is a cocycle. We may therefore choose $W=Z\cup y$ (which is a cocycle)
$$\sum_{\ell\in S}\textup{inv}_{\ell}\left(\textup{res}_{\ell}(z\cup y)\right)=\sum_{\ell\in S}\textup{inv}_{\ell}\left(\textup{res}_{\ell}(Z\cup y)\right)=0$$
by global class field theory.
\end{proof}
\begin{proof}[Proof of Proposition~\ref{propappendix:RSformula}]
We essentially do no more than expanding on Nekov\'a\v{r}'s proof of \cite[Proposition 11.3.15]{nek}, clarifying certain points and adapting them to our needs. We still opt to include all details to make sure that the argument goes through without any trouble. 

Let $\beta^1(x_f)=\left(z,z_S^+=(z_\ell^+)_{\ell\in S},\omega_S=(\omega_\ell)_{\ell \in S}\right)\in\widetilde{C}^2_f(X)\otimes\Gamma$. Consider the diagram
$$\xymatrix@R=2pc @C=1.3pc {&H^1(G_{\QQ,S},X\otimes\LL/J^2)\ar[d]^{\textup{pr}}\\
\widetilde{H}^1_f(X)\ar[d]^{\beta^1}\ar[r]^(.45){\pi_1}&H^1(G_{\QQ,S},X)\ar[d]^{\beta^1}\\
\widetilde{H}^2_f(X)\otimes\Gamma\ar[r]^(.45){\pi_2}&H^2(G_{\QQ,S},X)\otimes\Gamma
}$$
Since $\pi_1([x_f])=[x]=\textup{pr}([x_{\textup{Iw}}]\, \textup{mod}\, J^2) \in \ker(\beta_1)$, it follows from the commutativity of the square in the diagram that 
$$\pi_2(\beta^1([x_f]))=[z]=0$$
as well. Hence $z=dZ$ (where $Z\in C^1(G_{\QQ,S},X)\otimes\Gamma$) is a coboundary. Furthermore, since $\beta^1(x_f)$ is a cocycle it follows that 
$$i_S^+(z_S^+)=d\omega_S+\textup{res}_S(z)=d(\omega_S+\textup{res}_S(Z)).$$
Applying the discussion of Definition~\ref{define:minuscohomology} (on replacing $X$ with $X\otimes\Gamma$ and setting $\alpha:=\omega_S+\textup{res}_S(Z)$, $b=z_S^+$) we conclude that 
$$\partial([\alpha^-_{z_S^+}])=\left[\left(0,z_S^+,\alpha=\omega_S+\textup{res}_S(Z)\right)\right] \in \widetilde{H}^1_f(X)\otimes\Gamma\,.$$
On the other hand 
$$\beta^1(x_f)=\left(z,z_S^+,\omega_S\right)=\left(0,z_S^+,\alpha\right)+(dZ,0,-\textup{res}_S(Z))=\left(0,z_S^+,\alpha\right)+d(Z,0,0)$$
i.e., $\beta^1(x_f)-\left(0,z_S^+,\alpha\right) \in \widetilde{Z}^1_f(X)$ is in fact a coboundary and hence 
$$\label{eqn:Rubinfirstreduction}\beta^1([x_f])=[(0,z_S^+,\alpha)]=\partial([\alpha^-_{z_S^+}])\,.$$
By Lemma~\ref{lemma:useglobalCFT}, 
$$\langle \beta^1([x_f]),[y_f] \rangle_{\textup{PT}}=\sum_{\ell \in S} \textup{inv}_\ell(\alpha_\ell \cup \textup{res}_\ell(y) +i_\ell^+({z_\ell^+})\cup\mu_\ell)$$ 
and thus
\begin{align*}
\left\langle \beta^1([x_f]),[y_f] \right\rangle_{\textup{PT}}&=\sum_{\ell \in S} \textup{inv}_\ell(\alpha_\ell \cup \textup{res}_\ell(y) +d\alpha_\ell\cup\mu_\ell) =\sum_{\ell \in S} \textup{inv}_\ell(\alpha_\ell \cup \textup{res}_\ell(y) +\alpha_\ell\cup d\mu_\ell)\\
&=\sum_{\ell \in S} \textup{inv}_\ell(\alpha_\ell \cup (\textup{res}_\ell(y) + d\mu_\ell))\\
&=\sum_{\ell \in S} \textup{inv}_\ell(\alpha_\ell \cup i_\ell^+(y_\ell^+))\\
&=\sum_{\ell \in S} \textup{inv}_\ell\left((\alpha_{z_S^+}^-)_{\ell} \cup y_\ell^+\right) \tag{\ding{168}}
\end{align*}
Here, the equality (\ding{168}) follows from the formulas in \cite[\S6.2.2]{nek}: 
$$(\alpha_{z_S^+}^-)_{\ell} \cup y_\ell^+=(z_\ell^+,\alpha_\ell)\cup y_\ell^+ :=\alpha_\ell\cup i_\ell^+(y_\ell^+) +\frak{h}_\ell(z_\ell\otimes y_\ell^+)$$
where $\frak{h}_\ell$ is a null-homotopy from Remark~\ref{rem:Appnoambiguity} and it vanishes under $\textup{inv}_\ell$.

Let $[Dx_{\textup{Iw}}] \in H^1(U_S^-(X))\otimes\Gamma$ be as in Lemma~\ref{lem:bocksteinnormalizedderivative}. Then 
$$-\partial([Dx_{\textup{Iw}}])=\beta^1([x_f])=\partial([\alpha^-_{z_S^+}])$$
The exactness of the sequence
$$H^1(G_{\QQ,S},X)\otimes\Gamma\stackrel{\textup{res}_S^-}{\lra} H^1(U_S^-(X))\otimes\Gamma\stackrel{\partial}{\lra} \widetilde{H}^2_f(X)\otimes\Gamma$$
shows that there is a cocycle $\frak{E}\in C^1(G_{\QQ,S},X)$, $d\frak{E}=0$ such that 
\begin{align*}U_S^-(X)^1\ni(0,\textup{res}_S(\frak{E}))=\textup{res}_S(\frak{E})^-&=Dx_{\textup{Iw}}+\alpha^-_{z_S^+}+d(u,v)
\end{align*}
where $(u,v)\in U_S^-(X)^1$. Combining this fact with the formula we obtained above for $\left\langle \beta^1([x_f]),[y_f] \right\rangle_{\textup{PT}}$ and making use of the observation in Remark~\ref{rem:Appnoambiguity} we have,
$$\left\langle \beta^1([x_f]),[y_f] \right\rangle_{\textup{PT}}=-\sum_{\ell \in S} \textup{inv}_\ell\left((Dx_{\textup{Iw}})_\ell \cup y_\ell^+\right)+\sum_{\ell \in S} \textup{inv}_\ell\left(\textup{res}_S(\frak{E})^- \cup y_\ell^+\right)\,.$$
However,
\begin{align}
\notag\sum_{\ell \in S} \textup{inv}_\ell\left(\textup{res}_\ell(\frak{E})^- \cup y_\ell^+\right)&=\sum_{\ell \in S} \textup{inv}_\ell\left(\textup{res}_\ell(\frak{E}) \cup i_{\ell}^+(y_\ell^+)\right)\\
\notag&=\sum_{\ell \in S} \textup{inv}_\ell\left(\textup{res}_\ell(\frak{E}) \cup (\textup{res}_{\ell}(y)+d\mu_{\ell})\right)\\
\notag&=\sum_{\ell \in S} \textup{inv}_\ell\left(\textup{res}_\ell(\frak{E}\cup y)\right)+\sum_{\ell \in S} \textup{inv}_\ell\left(\textup{res}_\ell(\frak{E}) \cup d\mu_{\ell}\right)\\
\tag{$\dagger$}&=\sum_{\ell \in S} \textup{inv}_\ell\left(\textup{res}_\ell(\frak{E}) \cup d\mu_{\ell}\right)\\
\tag{$\star$}&=-\sum_{\ell \in S} \textup{inv}_\ell\left(d\left(\textup{res}_\ell(\frak{E}) \cup \mu_{\ell}\right)\right) =0
\end{align}
where the first equality follows from the discussion of \cite[\S6.2.2]{nek}, ($\dagger$) by the reciprocity law and ($\star$) by using the fact that $d\frak{E}=0$ and observing
$$d\left(\textup{res}_\ell(\frak{E}) \cup \mu_{\ell}\right)=d\,\textup{res}_\ell(\frak{E}) \cup \mu_{\ell}-\textup{res}_\ell(\frak{E}) \cup d\mu_{\ell}\,.$$ 
The proof of Proposition~\ref{propappendix:RSformula} follows.
\end{proof}
\begin{cor}
\label{cor:incaseallbutoneacyclic} In the setting of Proposition~\ref{propappendix:RSformula} assume also that the complexes $C^\bullet(G_\ell,X)$ are acyclic for $\ell\neq p$. Then
$$\langle[x_f],[y_f] \rangle_{\textup{Nek}}=-\left\langle \left[j_p^-(Dx_{\textup{Iw}})\right]_p, [y_p^+] \right\rangle_{\textup{Tate}}\,.$$
\end{cor}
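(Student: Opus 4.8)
The plan is to read the Corollary off of Proposition~\ref{propappendix:RSformula}, which already supplies
$$\langle [x_f],[y_f]\rangle_{\textup{Nek}}=-\sum_{\ell\in S}\textup{inv}_\ell\big((Dx_{\textup{Iw}})_\ell\cup y_\ell^+\big)\,.$$
Thus it suffices to prove two things: that each term with $\ell\neq p$ vanishes, and that the surviving term at $\ell=p$ equals $\langle[j_p^-(Dx_{\textup{Iw}})]_p,[y_p^+]\rangle_{\textup{Tate}}$. I would first note that $j_p^-$ carries coboundaries to coboundaries, so the class $[j_p^-(Dx_{\textup{Iw}})]_p\in H^1(G_p,F_p^-X)\otimes\Gamma$ is insensitive to the ambiguity of $(Dx_{\textup{Iw}})_p$ discussed in Remark~\ref{rem:Appnoambiguity}; hence the right-hand side of the claimed identity is well-posed.

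For the terms with $\ell\neq p$ I would show $H^1(U_\ell^-(X))=0$. The defining triangle $U_\ell^+(X)\to C^\bullet(G_\ell,X)\to U_\ell^-(X)\to$ together with the acyclicity hypothesis on $C^\bullet(G_\ell,X)$ gives $H^1(U_\ell^-(X))\cong H^2(U_\ell^+(X))$, and for $\ell\neq p$ the Greenberg complex $U_\ell^+(X)=C^\bullet(G_\ell/I_\ell,X^{I_\ell})$ computes the cohomology of $\widehat{\ZZ}$, hence is concentrated in degrees $0$ and $1$, so $H^2(U_\ell^+(X))=0$. Therefore the component $[Dx_{\textup{Iw}}]_\ell$ is zero in $H^1(U_\ell^-(X))\otimes\Gamma$, i.e. $(Dx_{\textup{Iw}})_\ell=d(u_\ell,v_\ell)$ for some $(u_\ell,v_\ell)\in U_\ell^-(X)^0\otimes\Gamma$; and the computation already performed in Remark~\ref{rem:Appnoambiguity} shows $\textup{inv}_\ell\big(d(u_\ell,v_\ell)\cup y_\ell^+\big)=0$.

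It remains to treat $\ell=p$. Here I would use Lemma~\ref{lemma:minuscohomologycomparison}, by which $j_p^-\colon U_p^-(X)\to C^\bullet(G_p,F_p^-X)$ is a quasi-isomorphism factoring $\iota_p^-$. Expanding Nekov\'a\v{r}'s cone-level cup product $U_p^-(X)\otimes U_p^+(X^*)\to C^\bullet(G_p,R(1))$ from \cite[\S6.2.2]{nek}, exactly as in the step $(\ding{168})$ of the proof of Proposition~\ref{propappendix:RSformula} and discarding the null-homotopy contribution $\frak{h}_p$ (which dies under $\textup{inv}_p$), one obtains
$$\textup{inv}_p\big((Dx_{\textup{Iw}})_p\cup y_p^+\big)=\textup{inv}_p\big(j_p^-\big((Dx_{\textup{Iw}})_p\big)\cup i_p^+(y_p^+)\big)\,.$$
Since $F_p^+X^*=\textup{Hom}(F_p^-X,R(1))$, the pairing $(a,b)\mapsto\textup{inv}_p(a\cup i_p^+(b))$ on the right is by definition the local Tate duality pairing between $H^1(G_p,F_p^-X)$ and $H^1(G_p,F_p^+X^*)$, so the right-hand side is $\langle[j_p^-(Dx_{\textup{Iw}})]_p,[y_p^+]\rangle_{\textup{Tate}}$; combined with the previous paragraph and Proposition~\ref{propappendix:RSformula} this yields the Corollary. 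I expect the only delicate point to be this last identification: one has to keep careful track of Nekov\'a\v{r}'s sign and homotopy conventions for the cup product on cone complexes to be sure that passing through $j_p^-$ reproduces the honest Tate pairing on $F_p^-X$ with no spurious correction term.
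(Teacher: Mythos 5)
Your proposal is correct and carries out exactly the argument the paper leaves implicit (the Corollary is stated without any written proof). Both halves of your reduction are the right ones: the vanishing at $\ell\neq p$ via $H^1(U_\ell^-(X))\cong H^2(U_\ell^+(X))=0$ (using acyclicity of $C^\bullet(G_\ell,X)$ and $\textup{cd}_p(\widehat{\ZZ})=1$), combined with the coboundary computation of Remark~\ref{rem:Appnoambiguity} applied componentwise over the direct sum $U_S^-(X)=\bigoplus_\ell U_\ell^-(X)$; and the identification of the $p$-term with the Tate pairing through the quasi-isomorphism $j_p^-$ of Lemma~\ref{lemma:minuscohomologycomparison} and the cone-level cup product formula from \cite[\S6.2.2]{nek}, whose null-homotopy term dies under $\textup{inv}_p$ exactly as in step $(\ding{168})$ of the proof of Proposition~\ref{propappendix:RSformula}. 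The ``delicate point'' you flag at the end is real but handled by Definition~\ref{define:globalcupproducpairing} together with the factorization $\iota_p^-=j_p^-\circ(\text{projection})$ and the duality $F_p^+X^*=\textup{Hom}(F_p^-X,R(1))$, so no spurious correction appears.
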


{\scriptsize
\bibliographystyle{halpha}
\bibliography{references}
}
\end{document}